\newcommand{\E}{\mathbb{E}}
\newcommand{\A}{\mathbb{A}}
\newcommand{\B}{\mathbb{B}}
\newcommand{\R}{\mathbb{R}}
\newcommand{\Q}{\mathbb{Q}}
\newcommand{\D}{\mathbb{D}}
\newcommand{\N}{\mathbb{N}}
\newcommand{\cA}{\mathcal{A}}
\newcommand{\cE}{\mathcal{E}}
\newcommand{\cD}{\mathcal{D}}
\newcommand{\eps}{\varepsilon}
\newcommand{\Tr}{\mathrm{Tr}}
\newcommand{\WF}{\mathrm{WF}}
\newcommand{\IF}{\mathrm{IF}}
\newtheorem*{rigprob*}{Rigidity Problem for Uniform Roe Algebras}
\newtheorem*{rigprobcorona*}{Rigidity Problem for Uniform Roe Coronas}
\newcommand{\cC}{\mathcal{C}}
\newcommand{\cF}{\mathcal{F}}
\newcommand{\cP}{\mathcal{P}}
\newcommand{\cB}{\mathcal{B}}
\newcommand{\cK}{\mathcal{K}}
\newcommand{\SD}{\mathrm{SD}}
\newcommand{\SB}{\mathrm{SB}}
\newtheorem{theorem}{Theorem}[section]
\newtheorem*{theorem*}{Theorem}
\newtheorem{proposition}[theorem]{Proposition}
\newtheorem*{proposition*}{Proposition}
\newtheorem{lemma}[theorem]{Lemma}
\newtheorem*{lemma*}{Lemma}
\newtheorem{corollary}[theorem]{Corollary}
\newtheorem*{corollary*}{Corollar}
\newtheorem*{fact*}{Fact}
\theoremstyle{definition}
\newtheorem{definition}[theorem]{Definition}
\newtheorem*{definition*}{Definition}
\newtheorem{claim}[theorem]{Claim}
\newtheorem*{claim*}{Claim}
\newtheorem*{conjecture*}{Conjecture}
\newtheorem{construction}[theorem]{Construction}
\newtheorem{question}[theorem]{Question}
\newtheorem*{acknowledgments}{Acknowledgments}
\theoremstyle{remark}
\newtheorem*{example*}{Example}
\newtheorem{remark}[theorem]{Remark}
\newtheorem*{remark*}{Remark}
\newtheorem*{note*}{Note}
\newtheorem*{question*}{Question}
\DeclareMathOperator{\Span}{span}
\newcommand{\cL}{\mathcal L}
\newcommand{\cW}{\mathcal W}
\newcounter{my_enumerate_counter}
\newcommand{\pushcounter}{\setcounter{my_enumerate_counter}{\value{enumi}}}
\newcommand{\popcounter}{\setcounter{enumi}{\value{my_enumerate_counter}}}
\begin{document}

\title[Uniformly  Factoring Weakly Compact operators]{Uniformly  Factoring Weakly Compact operators and Parametrized Dualization}%

\author{L. Antunes}
\address[L. Antunes] {Departamento de Matem\'atica, Universidade Tecnol\'ogica Federal do Paran\'a, Campus Toledo \\
		85902-490 Toledo, PR \\ Brazil}
\email{leandroantunes@utfpr.edu.br}

\author{K. Beanland}
\address[K. Beanland]{Department of Mathematics, Washington \& Lee University, 204 W. Washington St. Lexington, VA, 24450}
\email{beanlandk@wlu.edu}
\urladdr{kbeanland.wordpress.com}

\author{B. M. Braga}
\address[B. M. Braga]{Department of Mathematics and Statistics,
York University,
4700 Keele Street,
Toronto, Ontario, Canada, M3J
1P3}
\email{demendoncabraga@gmail.com}
\urladdr{https://sites.google.com/site/demendoncabraga}

\subjclass[2010]{}
\keywords{}
\thanks{}
\date{\today}%
\maketitle

\begin{abstract}
This paper deals with the problem of  when, given a  collection $\cC$ of weakly compact operators between separable Banach spaces, there exists a separable reflexive Banach space $Z$ with a Schauder basis so that every element in $\cC$ factors through   $Z$ (or through a subspace of $Z$). A sample result is the existence of a reflexive space $Z$ with a Schauder  basis so that for each separable Banach space $X$, each weakly compact operator from $X$ to $L_1$ factors through $Z$.

We also prove the following descriptive set theoretical result: Let $\cL$ be the standard Borel space of bounded operators between separable Banach spaces. We show that if $\cB$ is   a Borel subset of weakly compact operators between Banach spaces with separable duals, then the assignment $A\in \cB\to A^*$ can be realized by a Borel map $\cB\to \cL$.
\end{abstract}

\setcounter{tocdepth}{1}
\tableofcontents

\section{Introduction}

The celebrated W. Davis, T. Figiel, W.B. Johnson and A. Pe{\l}czy{\'n}ski  factorization theorem   states that every weakly compact bounded operator factors through a reflexive space \cite[Corollary 1]{DavisFiegelJohnsonPelczynski1974} -- recall, a bounded operator $A:X\to Y$ between Banach spaces $X$ and $Y$ is said to \emph{factor through a Banach space $Z$} if      there are bounded operators $B:X \to Z$ and $C:Z \to Y$ so that $A=C\circ B$. This result has been used and generalized in many different directions in the past 45 years. A natural generalization, considered by several authors, asks when a uniform factorization is possible in the following sense: For which collections $\mathcal{C}$ of weakly compact operators between separable Banach spaces does there exist a separable reflexive space $Z$ so that every operator in $\mathcal{C}$ factors through $Z$? In the current paper, we continue this line of inquiry and use several results from descriptive set theory to obtain uniform factorization type theorems for certain collections of weakly compact operators between separable Banach spaces.  

Let us review some of the literature regarding uniform factorization. In 1971, W.B. Johnson  provided a separable space $Z_K$ so that each operator which is the uniform limit of finite rank
operators factors through $Z_K$ \cite[Theorem 1]{Jo-Israel}. On the opposite direction, decades later in 2009, W.B. Johnson and A. Szankowski  showed that there is no separable Banach space through which every compact operator factors    \cite[Theorem 2.5]{JoSz-JFA}. \AA. Lima, O. Nygaard and E. Oja  proved that for each finite dimensional subspace $F$ of weakly compact operators between fixed Banach spaces $X$ and $Y$ there is a reflexive space $Z_F$ so that each operator in $F$ factors (isometrically) through $Z_F$ \cite[Theorem 2.3]{LimaNyOj-Israel}. 

Using the theorem of W. Szlenk  that no separable reflexive space contains isomorphic copies  of all separable reflexive Banach spaces \cite[Theorem 3.3]{Sz}, it follows that there is no separable reflexive space through which every weakly compact operator between separable Banach spaces factors. Therefore uniform factorization questions are only relevant for proper subclasses of weakly compact operators. As it is often common for this kind of results, descriptive set theoretical tools provide us the appropriate framework to study which subclasses have this property (cf. \cite{BeanlandCausey2017,BeanlandFreeman2014,DoFe-Fund}).

There is another natural obstruction for factorization theorems for weakly compact operators. Since a complemented subspace of a Banach space with a Schauder basis must have the bounded approximation property, \iffalse A classical result of A. Pe{\l}czy{\'n}ski \cite{Pe-Studia} states any separable {B}anach space with the bounded approximation property is a complemented subspace of a Banach space with a basis. Therefore, if\fi if  $\mathcal{C}$ is a collection of operators which  contains the identity on a reflexive space without the bounded approximation property and $Z$ is a space so that every operator in $\cC$ factors through, then $Z$ cannot have a basis. It is therefore more tractable to restrict our attention to weakly compact operators which factor through spaces with bases. This, in turn, focuses our attention on weakly compact operators defined on spaces whose domains or codomains have a basis.

%Therefore, if one is not only interested in finding a separable reflexive space $Z$ for which every operator in a class of weakly compact operators $\cC$ factors through but is also interested in finding such $Z$ with a Schauder basis, it is often important to assume some basis condition on either the domain or the range of the weakly compact operators in the collection $\mathcal{C}$.

%However note that whenever the identity on a space $X$ factors through the identity on a space $Y$ we have $Y$ contains an complemented subspace isomorphic to $X$. Therefore a space without the approximation property cannot factor through a space with a basis

In \cite{BeanlandFreeman2014} the second named author and D. Freeman initiated the program of studying uniform factorization problems using descriptive set theory. For that, the authors introduced a natural coding  for the class of all operators between arbitrary Banach spaces, denoted by $\cL$ -- i.e., $\cL$ is a  standard Borel space which naturally codes this class of operators (we refer the reader to Section \ref{SectionBackground} for precise definitions).  This coding was also used in the papers \cite{BeanlandCausey2017,BCFrWa-JFA,BCa-Houston,BCa-Scand,CaNa-JFA}. Since the coding space $\cL$ is a standard Borel space, this allows one to study the complexity of classes of bounded operators between separable Banach spaces, i.e., we can inquire whether certain classes of operators are  Borel, analytic, coanalytic, etc.

%In order to review these results we will introduce the following notation.
%This structure to prove several uniform factorization results for operators. 
%mainly focusing on weakly compact operators and operator whose adjoints have separable range. 

The next definition is central to the main results of this paper.
 
 \begin{definition}
Let  $\mathcal{J}\subset \mathcal{L}$ be a coding for some class of operators between separable Banach spaces and $\cP$ be a class of Banach spaces. We say that $\mathcal{J}$ is \emph{strongly bounded with respect to $\cP$} (resp. \emph{complementably strongly bounded with respect to $\cP$}) if for each analytic subset $\mathcal{A}\subset \mathcal{J}$  there exists $Z\in \cP$ so that every  operator in $\mathcal{A}$ factors through a subspace of $Z$ (resp. factors through $Z$).
 \end{definition}
 
The motivation for this terminology comes from the definition of (complementably)  strongly bounded classes of separable Banach spaces. Let us recall this concept. Let $\SB$ be the now classic standard Borel space coding the class of all separable Banach spaces (see Subsection \ref{SubsectionSB} for precise definitions). A subset  $\cB\subset\SB$ is called \emph{strongly bounded} (resp. \emph{complementably strongly bounded}) if for each analytic subset $\cA\subset \cB$ there exists  $Z\in \cB$ so that every element of $\cA$ is isomorphic to a (resp. complemented) subspace of $Z$. Hence, it easily follows that $\cB$ is (resp. complementably) strongly bounded if and only if the collection $\{\mathrm{Id}:X\to X\}_{X\in \cB}\subset \cL$ is (resp. complementably) strongly bounded with respect to $\cB$. A notion of strongly bounded classes of operators which does not depend of a class of Banach spaces was introduced  by the second named author and R. Causey in \cite{BeanlandCausey2017}. Indeed, a collection of operators $\mathcal{J}$ (always between separable Banach spaces) is \emph{strongly bounded} if for each analytic subset $\mathcal{A}$ of $\mathcal{J}$ there is an operator $T \in \mathcal{J}$ so that every operator in $\mathcal{A}$ factors through $T$ restricted to some subspace of its domain. In \cite[Theorem 5.8]{BeanlandCausey2017}, the authors prove that the weakly compact operators are strongly bounded, which, combined with the fact  weakly compact operators factor through reflexive spaces, is equivalent to the statement that the weakly compact operators are strongly bounded with respect to the reflexive spaces. There are many open questions regarding whether a given operator ideal (restricted to separable Banach spaces) is strongly bounded, see \cite{BeanlandCausey2017} for a collection of open problems in this area. 

Furthermore, we note that the weakly compact operators between separable Banach spaces is not complementably strongly bounded with respect to the reflexive spaces. Indeed, as already mentioned above, W.B. Johnson and A. Szankowski \cite{JoSz-JFA} showed there is no separable Banach space through which every compact operator factors and it is routine to observe that the compact operators between separable Banach spaces are a Borel subset of $\mathcal{L}$.

%The next theorem  is the main result of these notes on strong boundedness of classes of operators. 
 
%\begin{theorem}\label{ThmStrongBound}
%The subset $\cW_{\SD}\subset \cL$ coding the weakly compact operators between Banach spaces with separable dual is strongly bounded with respect to the class of separable reflexive Banach spaces.
%\end{theorem}

%We obtain Theorem \ref{ThmStrongBound} as a corollary of a stronger result. Precisely,   we show that if $\cB\subset \cW_{\SD}$ is a Borel subset, then there exists a reflexive $Z\in \SB$ and a $\sigma(\Sigma^1_1)$-measurable assignment $\cB\to \SB$ which given an operator $A\in \cB$ selects a subspace of $Z$ through which $A$ factors (see Theorem \ref{ThmFactoringSubspaces} for a precise statement).

 Before stating our main results on complementably strong boundedness, we introduce some notation. Let $\mathcal{W}$ denote the subset  of $\cL$ which codes the weakly compact operators between separable Banach spaces (see Subsection \ref{SubsectionCodeBoundedOp}). 

\begin{definition}\label{DefiWAB}
Given subsets $\mathbb{A},\mathbb{B}\subset \SB$, we define the following collection of operators:
\[\cW_{\mathbb{A},\mathbb{B}}=\{(A:X\to Y)\in \cW: X\in \mathbb{A}\text{ and }Y\in \mathbb{B}\}.\]
If  $\mathbb{A}$ (resp. $\mathbb{B}$) is the isomorphism class $\langle Z\rangle$ in $\SB$ of a certain separable Banach space $Z$, we simply write $\cW_{Z,\mathbb{B}}$ (resp. $\cW_{\mathbb{A},Z}$).
\end{definition}

Let $\SD$,  $\mathrm{sb}$, $\mathrm{sb}^*$,  $\mathrm{ub}$ and $\mathrm{ub}^*$ denote the subsets of $\SB$ coding the separable Banach spaces (1) with separable dual,  (2) with  shrinking bases, (3) whose  duals have  shrinking bases, (4) with  unconditional bases and (5) whose  duals have  unconditional bases, respectively. The following is the main result of \cite{BeanlandFreeman2014}.

\begin{theorem} [Theorem 25 of \cite{BeanlandFreeman2014}]
Both $\mathcal{W}_{\SB,\mathrm{sb}}$ and $\mathcal{W}_{\SB,C(2^\mathbb{N})}$ are complementably strongly bounded with respect to  the class of separable reflexive Banach spaces admitting Schauder bases.
\label{BF-main}
\end{theorem}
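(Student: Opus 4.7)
The plan is to combine a parametrized (Borel) Davis-Figiel-Johnson-Pe{\l}czy{\'n}ski (DFJP) factorization with an amalgamation step that produces a single separable reflexive Banach space with a Schauder basis through which every operator in the analytic family factors.

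First, I would make the DFJP factorization parametrized. For each $A : X \to Y$ in $\cW$, the DFJP construction applied to $W_A = \overline{\mathrm{conv}(A(B_X))}$ produces a separable reflexive Banach space $Z_A$, together with a bounded inclusion $j_A : Z_A \to Y$ and a factor $i_A : X \to Z_A$ satisfying $A = j_A \circ i_A$. The interpolation norm on $Z_A$ is an $\ell_2$-combination of the Minkowski functionals of the sets $2^n W_A + 2^{-n} B_Y$, which depend in an explicit Borel manner on the coded operator. I would therefore verify that the assignment $A \mapsto (Z_A, i_A, j_A)$ is realized by a Borel map into the appropriate standard Borel coding spaces, so that for any analytic $\cA$, the class $\{Z_A : A \in \cA\}$ is an analytic family of separable reflexive Banach spaces in $\SB$.

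The heart of the argument is amalgamation: given the analytic family $\{(Z_A, i_A, j_A)\}_{A \in \cA}$, produce a single separable reflexive Banach space $Z$ with a Schauder basis and, for each $A \in \cA$, operators $E_A : Z_A \to Z$ and $F_A : Z \to Z_A$ with $F_A \circ E_A \circ i_A = i_A$. This is where the hypothesis on $Y$ enters, and the two cases of the theorem are handled by different devices. In the $\cW_{\SB,\mathrm{sb}}$ case, I would exploit the basis $(y_n)$ of $Y$: the coordinate projections on $Y$ give a canonical way to cut each $Z_A$ into finite-dimensional pieces coordinated with $(y_n)$, yielding a reflexive space with an FDD, after which a Dodos-type amalgamation of an analytic family of such FDD-spaces produces a universal reflexive space with a Schauder basis. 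In the $\cW_{\SB,C(2^\NN)}$ case, I would instead appeal to the universality of $C(2^\NN)$ as a receiver for separable Banach spaces together with the natural Schauder decomposition of $C(2^\NN)$ in order to realize the $j_A$'s uniformly and thereby run a parallel amalgamation.

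Once $Z$, $E_A$, and $F_A$ are in hand, the factorization follows immediately: setting $B_A = E_A \circ i_A : X \to Z$ and $C_A = j_A \circ F_A : Z \to Y$, one computes $C_A \circ B_A = j_A \circ F_A \circ E_A \circ i_A = j_A \circ i_A = A$. The main obstacle is the amalgamation step, which requires a careful coordination between the parametrized DFJP data and classical amalgamation techniques for reflexive Banach spaces, refined so that the universal reflexive space admits a Schauder basis. Once that technical construction is in place, parametrized DFJP and composition close the argument, and the same scheme applies in both cases of the theorem with only the amalgamation input changing.
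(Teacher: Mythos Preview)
Note first that the present paper does not prove this theorem; it is quoted from \cite{BeanlandFreeman2014}. Nonetheless, Construction~\ref{Construction}, Lemma~\ref{LemmaProposition14}, and the proofs of Theorems~\ref{ThmBeanlandFreeman} and~\ref{ThmApliFactDUALS} expose the mechanism of the Beanland--Freeman argument, and your proposal misses its two essential ingredients.

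The first gap is in the DFJP step. Applying plain DFJP to $W_A=\overline{\mathrm{conv}}(A(B_X))$ yields a reflexive $Z_A$ with no basis, and your plan to recover an FDD \emph{a posteriori} by ``cutting $Z_A$ into finite-dimensional pieces using the coordinate projections of $Y$'' fails: the partial-sum projections $P_m$ of $Y$ are in general \emph{not} uniformly bounded on $(Z_A,\|\cdot\|_{Z_A})$, since $P_m(W_A)$ need not lie in any fixed multiple of $W_A$. The actual argument (see Construction~\ref{Construction}) \emph{enlarges} $W_A$ before interpolating, taking $\overline{\mathrm{conv}}\bigl(\bigcup_m P_m(A(B_X))\bigr)$; this is what makes the $P_m$ act boundedly on the interpolation space, so that the basis $(e_n^Y)$ of $Y$ lifts to a basis $(z_n^A)$ of $Z_A$. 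The hypothesis on $Y$ (shrinking basis, or the specific case $C(2^{\mathbb N})$) is precisely what is used to show the enlarged set is still relatively weakly compact, hence $Z_A$ still reflexive. Without this modification you have neither a basis on $Z_A$ nor any analytic family of basic sequences to amalgamate.

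The second gap is in the amalgamation step. A ``Dodos-type amalgamation'' of an analytic family of reflexive spaces produces a reflexive $Z$ containing each $Z_A$ as a \emph{subspace}; that gives only strong boundedness. For complementably strong boundedness one needs each $Z_A$ to sit as a \emph{complemented} subspace of $Z$ (this is exactly what is invoked in the proof of Theorem~\ref{ThmApliFactDUALS}). The Beanland--Freeman argument achieves this via a Schechtman-type $\ell_2$-sum over a tree encoding the analytic family of bases $(z_n^A)_n$ produced in Stage~1, so that each $(z_n^A)_n$ is equivalent to a complemented basic sequence in $Z$. Your operators $E_A,F_A$ with $F_A\circ E_A\circ i_A=i_A$ presuppose exactly this complementation, but nothing in your outline supplies it.
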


We obtain the following result on complementably strongly bounded classes of operators.

\begin{theorem}\label{ThmMainFactResult}
The sets $\mathcal{W}_{\SB,\mathrm{ub}}$,   $\mathcal{W}_{\mathrm{ub}^*,\SD}$, and  $\mathcal{W}_{\mathrm{sb}^*,\SD}$ are complementably strongly bounded with respect to the class of separable reflexive Banach spaces admitting Schauder bases. \label{go through}
\end{theorem}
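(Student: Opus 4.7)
The overall plan is to handle the three sets with a common toolkit centered on the Beanland--Freeman theorem (Theorem~\ref{BF-main}) and the parametrized dualization result advertised in the abstract. The case $\mathcal{W}_{\SB,\mathrm{ub}}$ is a codomain-side statement that will be reduced directly to Theorem~\ref{BF-main}. The remaining two cases, $\mathcal{W}_{\mathrm{ub}^*,\SD}$ and $\mathcal{W}_{\mathrm{sb}^*,\SD}$, are domain-side statements, which I will convert into codomain-side statements on the adjoints and then reduce to the first case and to Theorem~\ref{BF-main}, respectively.

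For $\mathcal{W}_{\SB,\mathrm{ub}}$, take an analytic $\mathcal{A}\subseteq \mathcal{W}_{\SB,\mathrm{ub}}$. The first step is to show that for each weakly compact $A\colon X\to Y$ with $Y$ carrying an unconditional basis $(y_n)$, the range of $A$ is contained in a reflexive subspace $Y_A\subseteq Y$ carrying an unconditional basis (a coordinate block subspace of $(y_n)$ determined canonically by $A$); such a $Y_A$ exists because the unconditional hull of the weakly compact set $A(B_X)$ is itself weakly compact, which rules out copies of $c_0$ or $\ell_1$ in its closed linear span, so James' theorem on spaces with unconditional bases forces reflexivity, and in a reflexive space every basis is automatically shrinking. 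The delicate point is to select $Y_A$ in a Borel way from $A$: once this is done, corestriction converts $\mathcal{A}$ into an analytic subfamily $\mathcal{A}'\subseteq \mathcal{W}_{\SB,\mathrm{sb}}$, Theorem~\ref{BF-main} supplies a single reflexive $Z$ with basis factoring every element of $\mathcal{A}'$, and composing the codomain map with the inclusion $Y_A\hookrightarrow Y$ yields the desired factorization of $A$ through $Z$.

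For $\mathcal{W}_{\mathrm{sb}^*,\SD}$ (the case $\mathcal{W}_{\mathrm{ub}^*,\SD}$ is parallel, using the first part in place of Theorem~\ref{BF-main}), take an analytic $\mathcal{A}\subseteq \mathcal{W}_{\mathrm{sb}^*,\SD}$. By the parametrized dualization result, the image $\mathcal{A}^*=\{A^*:A\in\mathcal{A}\}$ is analytic in $\mathcal{L}$ and, by the hypothesis on the domains, $\mathcal{A}^*\subseteq \mathcal{W}_{\SB,\mathrm{sb}}$. Theorem~\ref{BF-main} then yields a reflexive space $Z$ with a basis and factorizations $A^*=C_A\circ B_A$, where $B_A\colon Y^*\to Z$ and $C_A\colon Z\to X^*$. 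Taking adjoints, $A^{**}=B_A^*\circ C_A^*$, and since $A$ is weakly compact, $A^{**}(X^{**})\subseteq Y$; so the composition $B_A^*\circ C_A^*$ restricted to $X$ equals $A$, in principle factoring $A$ through $Z^*$, which is itself reflexive with basis (the biorthogonal of a shrinking basis of $Z$).

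The principal obstacle is that, as stated, this argument only produces a factorization of $A$ through the subspace $W_A:=\overline{C_A^*(X)}\subseteq Z^*$: the map $B_A^*\colon Z^*\to Y^{**}$ is not guaranteed to land in $Y$ on all of $Z^*$, an issue equivalent to $B_A$ being weak-$*$-to-weak continuous, a property Theorem~\ref{BF-main} does not guarantee. Upgrading this to a factorization through $Z^*$ itself -- as required for \emph{complementable} strong boundedness -- will call for extra work. The plan is either to refine the Beanland--Freeman construction so that the $B_A$'s are chosen as adjoints (automatically yielding $B_A^*(Z^*)\subseteq Y$), or, alternatively, to embed all the subspaces $W_A$ complementably into a single reflexive space $Z'$ with basis -- invoking that the class of reflexive spaces with Schauder basis is itself complementably strongly bounded -- and define the codomain map $Z'\to Y$ as the projection onto $W_A$ followed by $B_A^*|_{W_A}$.
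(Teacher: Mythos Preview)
Your plan for the two ``domain-side'' families $\mathcal{W}_{\mathrm{sb}^*,\SD}$ and $\mathcal{W}_{\mathrm{ub}^*,\SD}$ is essentially the paper's, and you correctly identify the real obstacle (that $B_A^*$ need not take values in $Y$). The paper resolves it via your first proposed alternative: it observes that the Beanland--Freeman factorization of each $A^\bullet$ is a DFJP factorization $A^\bullet = J_A\circ (J_A^{-1}A^\bullet)$ through $\Delta_2(X^\bullet,W_A)$, and proves (Lemma~\ref{LemmaDFJP}) that when the input operator is weak$^*$-to-weak$^*$ continuous both DFJP factors are as well; hence they are adjoints, and predualizing yields a factorization of $A$ through $Z^*$. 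Your second alternative (complementably re-embedding the subspaces $W_A=\overline{C_A^*(X)}$) does not work as stated: the $W_A$ are arbitrary subspaces of a reflexive space and need not carry bases, so there is no ``complementably strongly bounded'' class to invoke for them.

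There is, however, a genuine gap in your treatment of $\mathcal{W}_{\SB,\mathrm{ub}}$. The assertion that the closed linear span of the unconditional hull of $A(B_X)$ cannot contain $c_0$ or $\ell_1$ --- and hence is reflexive by James' theorem --- is false, so the promised reflexive coordinate subspace $Y_A$ need not exist. Consider the diagonal operator $A\colon\ell_2\to\ell_1$, $Ae_n=2^{-n}e_n$: it is compact, its range is dense in $\ell_1$, and its unconditional hull is still compact (the $\ell_1$ norm is sign-invariant), yet the only closed subspace of $\ell_1$ containing the range is $\ell_1$ itself. The paper never looks for a subspace of $Y$. Instead it composes with the complemented embedding of $Y$ into Pe\l czy\'nski's universal unconditional space $U$, shows (Lemma~\ref{PropJohnson}) that $W=\overline{\mathrm{conv}}\,\bigcup_n P_n(A(B_X))$ is weakly compact in $U$, and then runs the DFJP interpolation to produce a \emph{new} reflexive space $\Delta_2(U,W)$ with a basis through which $I_Y\circ A$ factors; the projection back onto $Y$ and the Borel parametrization of the construction finish the job (Lemma~\ref{LemmaProposition14} and Theorem~\ref{ThmBeanlandFreeman}). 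The point is that reflexivity is manufactured by interpolation, not found inside $Y$.
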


Moreover, for $\cW_{\SB,L_1}$, we obtain the following stronger result.

\begin{theorem}\label{ThmL1}
There exists a reflexive  $Z\in \SB$ with a Schauder basis so that   every $A\in \cW_{\SB,L_1}$  factors through $Z$.
\end{theorem}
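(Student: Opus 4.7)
The plan is to apply the Davis--Figiel--Johnson--Pe{\l}czy{\'n}ski (DFJP) factorization individually to each weakly compact $A\in\cW_{\SB,L_1}$ and then unify the resulting reflexive targets into a single space, leveraging the fact that $\cW_{\SB,L_1}$ is a Borel subset of $\cL$ (since the target is fixed). Concretely, DFJP provides for each $A\colon X\to L_1$ a separable reflexive Banach space $W_A$ together with bounded operators $U_A\colon X\to W_A$ and $j_A\colon W_A\to L_1$ satisfying $A=j_A\circ U_A$. The aim is to construct a single reflexive $Z$ with a Schauder basis and a single bounded $C\colon Z\to L_1$ such that every such $A$ factors as $C\circ B$ for some bounded $B\colon X\to Z$.

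The strategy has three steps. First, I would select a countable family $\{A_n\}\subseteq\cW_{\SB,L_1}$ which is \emph{universal} in the sense that for every $A\in\cW_{\SB,L_1}$ there exist $n\in\NN$ and a bounded $V\colon X\to W_{A_n}$ with $j_{A_n}\circ V=A$. This selection is possible because the class $\cW_{\SB,L_1}$ is Borel: combining the Borel dualization results developed earlier in the paper with a standard uniformization argument, one extracts the required countable cofinal subfamily, where the natural preorder on $\cW_{\SB,L_1}$ is ``$A\preceq A'$ iff $A$ factors through $j_{A'}$''. Second, I would refine the DFJP construction so that each $W_{A_n}$ admits a Schauder basis; this is achievable because $L_1$ itself has the Haar basis, and the DFJP interpolation can be carried out so as to inherit basis structure from the ambient space. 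Third, form $Z:=\bigl(\bigoplus_n W_{A_n}\bigr)_{\ell_2}$, which is reflexive (as an $\ell_2$-sum of reflexive spaces) and admits a Schauder basis (as the concatenation of the bases of each summand), and define $C\colon Z\to L_1$ by $C((w_n)_n):=\sum_n 2^{-n}j_{A_n}(w_n)$. The factorization of any given $A$ through $Z$ is then obtained by funnelling $X$ into the appropriate block and applying the summand-level factorization guaranteed by universality.

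The hard part will be producing the countable universal family $\{A_n\}$, since every weakly compact $A$ must be dominated by some $A_n$ in the strong factorization sense above. This requires a careful analysis of the space of weakly compact subsets of $B_{L_1}$ using uniform integrability (Dunford--Pettis) together with an analytic selection theorem, so as to extract a countable cofinal subfamily with respect to the domination relation. The remaining ingredients --- the DFJP-with-basis construction, $\ell_2$-sums of reflexive spaces, and aggregation of the $j_{A_n}$'s into a single $C$ via the weights $2^{-n}$ --- are comparatively routine once such a universal family has been produced.
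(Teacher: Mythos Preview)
Your proposal has a genuine gap at its core. The step you yourself flag as ``the hard part'' --- producing a countable family $\{A_n\}$ cofinal for the factorization preorder $A\preceq A'$ --- is not proven, only gestured at. The suggested route via ``uniform integrability together with an analytic selection theorem'' is not an argument: the relation $A\preceq A'$ is not shown to be Borel (or even analytic), and there is no general principle guaranteeing that an analytic preorder on a standard Borel space admits a countable cofinal set. Parametrizing weakly compact subsets of $L_1$ by their modulus of uniform integrability leads naturally to an $\N^\N$-indexed family ordered by eventual domination, which has \emph{no} countable cofinal subset; so the naive version of your plan cannot work, and any refinement would have to be spelled out. Since this step carries the entire content of the theorem, what you have written is a strategy, not a proof.

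Two smaller points. First, $\cW_{\SB,L_1}$ is not Borel ``because the target is fixed'': by Definition~\ref{DefiWAB} the codomain ranges over the whole isomorphism class $\langle L_1\rangle$, and even with the codomain literally fixed, weak compactness is a priori only coanalytic. The paper proves $\cW_{\SB,L_1}$ is \emph{analytic} (Proposition~\ref{PropL1}), and already this requires the uniform-integrability characterization of relative weak compactness in $L_1$. Second, the dualization machinery of Section~\ref{SectionDUAL} is irrelevant here and does not help with the missing uniformization.

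The paper's route is quite different and avoids any countable cofinal family. The key observation (Lemma~\ref{PropJohnson}) is that the Haar basis projections $P_n$ preserve the uniform-integrability inclusion $A(B_X)\subset M B_{L_2}+\varepsilon B_{L_1}$, so $W_A=\overline{\mathrm{conv}}\bigl(\bigcup_n P_n(A(B_X))\bigr)$ is again weakly compact; the DFJP interpolation space $\Delta_2(L_1,W_A)$ is then reflexive \emph{and} inherits a Schauder basis from the Haar basis. With this refinement in hand, the machinery of \cite[Proposition~14]{BeanlandFreeman2014} (imported as Lemma~\ref{LemmaProposition14}) yields, for any Borel family of such operators, a single reflexive $Z$ with a basis through which they all factor. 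Theorem~\ref{ThmL1} then follows by combining the analyticity of $\cW_{\SB,L_1}$ (Proposition~\ref{PropL1}) with Theorem~\ref{ThmBeanlandFreeman}.
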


Our main tool to prove Theorem \ref{go through} is a new descriptive set theoretic result which   extends a  theorem by the third named author (see \cite[Theorem 1.1]{Braga2015Fourier}). For that, we will need the concept of isometry between bounded operators between separable Banach spaces, i.e., isometry between elements of $\cL$. Precisely, in \cite[Definition 5.4]{BeanlandCausey2017}, the authors defined the concept of two operators $A:X\to Y$ and $B:Z\to W$ being isomorphic to each other. We define analogously what it means for $A$ and $B$ to be isometric as follows.\footnote{  We notice that, in order to obtain Theorem \ref{ThmMainFactResult}, we could simply work with the concept of isomorphic operators, however, since it is possible to obtain an isometric result in Theorem \ref{ThmLSDmap}, we chose to work with isometric operators.} 

\begin{definition}\label{DefiIsomOperators}
Let $X$, $Y$, $Z$, and $W$ be Banach spaces and $A:X\to Y$ and $B:Z\to W$ be bounded linear operators. We say that $A:X\to Y$ and $B:Z\to W$ are \emph{isometric} (resp. \emph{isomorphic}) if there exist linear isometries (resp. linear isomorphisms) $j_1:X\to Z$ and $j_2:Y\to W$ such that $B\circ j_1=j_2\circ A$, i.e., such that the diagram
\[\xymatrix{X\ar[d]^{j_1}\ar[r]^{A}      &   Y\ar[d]^{j_2}\\
            Z\ar[r]^{B}                    &  W}\]          
commutes. The pair $(j_1,j_2)$ is called an \emph{isometry} (resp. \emph{isomorphism}) between $A$ and $B$. We write $(A:X\to Y)\equiv (B:Z\to W)$ (resp. $(A:X\to Y)\cong (B:Z\to W)$), or  $A\equiv B$ (resp. $A\cong B$) for short. 
\end{definition}

Let $\cL_{\SD}$ denote the subset of $\cL$ which codes all the bounded operators $A:X\to Y$ between Banach spaces $X$ and $Y$ with separable dual. In order to simplify the notation in the technical part of the next theorem, we identify an operator $(A:X\to Y)\in \cL$ with the triple $(X,Y,A)$.\footnote{Starting in Subsection \ref{SubsectionCodeBoundedOp}, we actually identify an operator $A:X\to Y\in\cL$ with a triple $(X,Y,\hat A)$, where $\hat A\in C(\Delta)^\N$. We believe this will cause no confusion and the reason for that will be clear once we present the formal definition of the coding space $\cL$ in Subsection \ref{SubsectionCodeBoundedOp}. }

\begin{theorem}\label{ThmLSDmap}	
Let $\cB\subset \cL_{\SD}$ be a Borel subset. There exists a Borel assignment $A\in \cB\mapsto A^\bullet\in \cL$, i.e., an assignment $(X,Y,  A)\in \cB\mapsto (Y^\bullet,X^\bullet, A^\bullet)\in \cL$, such that $A^\bullet$ is isometric to $A^*$ for all $A\in \cB$.

Moreover, the set 
\[\cC=\{(X,Y,  A,g,x)\in \cB\times C(\Delta)\times C(\Delta):  g\in Y^\bullet, x\in X\}\]
is Borel and there exists a Borel map $[\cdot,\cdot]_{(\cdot,\cdot,\cdot)}:\cC\to \R$ such that, for each $(X,Y,  A)\in \cB$,
\begin{enumerate}
\item $[ \cdot,\cdot]_{(X,Y,   A)}$ is bilinear and norm continuous, and
\item $g\in Y^\bullet\mapsto[ g,\cdot]_{(X,Y,  A)}\in X^*$ is isometric to $A^*$.
\end{enumerate}
\label{dual ops}
\end{theorem}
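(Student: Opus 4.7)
The plan is to bootstrap the spatial duality theorem \cite[Theorem 1.1]{Braga2015Fourier} to the operator level by pulling back along $A$. Identify elements of $\SB$ with closed subspaces of $C(\Delta)$ in the standard way. Since the domain and codomain projections $(X,Y,A)\mapsto X$ and $(X,Y,A)\mapsto Y$ are Borel maps from $\cB$ into $\SD\subseteq\SB$, Braga's theorem supplies Borel assignments $X\mapsto X^\bullet$ and $Y\mapsto Y^\bullet$ together with jointly Borel bilinear pairings $[\cdot,\cdot]_X\colon X^\bullet\times X\to\R$ and $[\cdot,\cdot]_Y\colon Y^\bullet\times Y\to\R$ that implement isometries $X^\bullet\equiv X^*$ and $Y^\bullet\equiv Y^*$.

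The ``moreover'' clause then comes essentially for free. For $(X,Y,A)\in\cB$, $g\in Y^\bullet$ and $x\in X$ I would set
\[
[g,x]_{(X,Y,A)} \;:=\; [g,Ax]_Y .
\]
Bilinearity, norm continuity, and the identification of $g\mapsto[g,\cdot]_{(X,Y,A)}$ as an isometric copy of $A^*$ (with the isometry on $Y^\bullet$ being Braga's and the one on $X^*$ being the identity) follow immediately from the corresponding properties of $[\cdot,\cdot]_Y$, since the displayed expression is nothing but the pull-back of $g\in Y^*$ along $A$. That this is a Borel function on $\cC$ follows by composing the Borel evaluation $(A,x)\mapsto Ax$ (part of the standard Borel structure on $\cL$) with Braga's Borel pairing, and $\cC$ itself is Borel because the predicates ``$x\in X$'' and ``$g\in Y^\bullet$'' are Borel in all parameters.

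To produce the code $A^\bullet\in\cL$ itself, I would fix Borel enumerations $(g^Y_n)_n$ of a norm-dense sequence in $Y^\bullet$ and $(x^X_k)_k$ of one in $X$; these are supplied uniformly by Braga's construction, since $X^\bullet$ and $Y^\bullet$ are themselves coded by such sequences in $C(\Delta)$. Define $A^\bullet(g^Y_n)\in X^\bullet$ to be the unique element satisfying
\[
[A^\bullet(g^Y_n),x^X_k]_X \;=\; [g^Y_n,A x^X_k]_Y \qquad\text{for every } k\in\N,
\]
i.e., the point of $X^\bullet$ that corresponds under Braga's isometry to the functional $g^Y_n\circ A\in X^*$; existence and uniqueness are immediate from Braga's theorem. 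The sequence $(A^\bullet(g^Y_n))_n\in C(\Delta)^{\N}$ then serves as the coding data of $A^\bullet$ inside $\cL$, and the required isometry with $A^*$ is witnessed by the pair of isometries on $Y^\bullet$ and $X^\bullet$ already provided.

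The principal obstacle is the uniformization step needed to make $(X,Y,A,n)\mapsto A^\bullet(g^Y_n)$ Borel. The defining system of equations above cuts out a Borel subset of $\cB\times\N\times C(\Delta)$ with singleton sections, so Lusin--Novikov (or, equivalently, a Borel closest-point projection inside the separable space $X^\bullet$) yields a Borel selector. The substantive content here is that Braga's pairings vary Borelly not only in the spatial parameter but jointly with all other data, which is precisely what \cite[Theorem 1.1]{Braga2015Fourier} was designed to guarantee; once that is in hand, the remaining bookkeeping — uniform choice of dense sequences, verifying $\cC$ is Borel, and assembling the countably many values $(A^\bullet(g^Y_n))_n$ into a single element of $\cL$ — reduces to routine manipulation of the coding.
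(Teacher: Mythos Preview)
Your approach is correct and follows the same overall strategy as the paper --- bootstrap \cite[Theorem~1.1]{Braga2015Fourier} from spaces to operators --- but the implementation differs in two places worth noting.

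First, for the pairing $[g,x]_{(X,Y,A)}$ you use the direct pull-back $[g,Ax]_Y$, which immediately gives bilinearity, continuity, and the isometry with $A^*$ (via $j_Y$ on one side and $\mathrm{id}_{X^*}$ on the other). The paper instead defines $[g,x]_{(X,Y,A)}=\langle A^\bullet(g),x\rangle_X$, so it must first build $A^\bullet$ and only afterwards obtain the pairing; your route is the simpler one here. Second, to produce the code $\hat A^\bullet$, the paper writes down an explicit formula using the map $\gamma$ from \cite[Corollary~3.10]{Braga2015Fourier}: it first records the sequence $x^*_n(X,Y,\hat A)\in\R^\N$ coding $A^*(j_Y(d_n(Y^\bullet)))$ inside $K_{X^*}$, and then applies $\gamma$ to push it into $X^\bullet\subset C(\Delta)$. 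You instead characterise $A^\bullet(d_n(Y^\bullet))$ as the unique $h\in X^\bullet$ with $[h,\cdot]_X=[d_n(Y^\bullet),A(\cdot)]_Y$ and invoke a singleton-section selection (Borel graphs of functions are Borel functions). Both are valid; the paper's version is more constructive and avoids any selection lemma, while yours treats Braga's machinery as a black box.

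One small omission: before invoking Braga's theorem you need the domain of $X\mapsto X^\bullet$ to be a \emph{Borel} subset of $\SD$, but the projections of $\cB$ to the first two coordinates are only analytic. The paper handles this by applying Lusin's separation theorem to obtain a Borel $\B\subset\SD$ containing both projections; you should include this step explicitly.
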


A corollary of Theorem \ref{dual ops} is the following, which itself is a generalization of the main result of \cite{Do-Houston}.

\begin{corollary}
Let $\mathcal{A} \subset \mathcal{L}_{\SD}$ be analytic. Then the set $$\mathcal{A}^*=\{B \in \mathcal{L}: \exists A \in \mathcal{A} \mbox{ with } B \mbox{ is isomorphic  to }A^*\}$$
is analytic. 
\label{generalize Dodos}
\end{corollary}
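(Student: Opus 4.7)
The plan is to reduce Corollary \ref{generalize Dodos} to Theorem \ref{dual ops}, using Lusin's separation theorem to bridge the gap between the ``Borel'' hypothesis of the theorem and the ``analytic'' hypothesis of the corollary. First I would verify that $\mathcal{L}_{\SD}$ is coanalytic in $\mathcal{L}$: the domain and codomain projections $\mathcal{L}\to\SB$ are Borel in the standard coding, and Bossard's classical theorem gives that $\SD\subset\SB$ is coanalytic. Since $\mathcal{A}$ and $\mathcal{L}\setminus\mathcal{L}_{\SD}$ are disjoint analytic sets, Lusin separation supplies a Borel set $\mathcal{B}$ with $\mathcal{A}\subset\mathcal{B}\subset\mathcal{L}_{\SD}$. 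Applying Theorem \ref{dual ops} to $\mathcal{B}$ then produces a Borel assignment $A\in\mathcal{B}\mapsto A^\bullet\in\mathcal{L}$ with $A^\bullet\equiv A^*$, and in particular $A^\bullet\cong A^*$, for every $A\in\mathcal{B}$.

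The key observation is the identity
\[
\mathcal{A}^*=\pi_1\bigl(\{(B,A)\in\mathcal{L}\times\mathcal{B}: A\in\mathcal{A}\ \text{and}\ B\cong A^\bullet\}\bigr),
\]
which holds because $A^\bullet\cong A^*$ whenever $A\in\mathcal{B}\supset\mathcal{A}$. The set inside the projection is analytic: $\mathcal{A}$ is analytic by hypothesis, $A\mapsto A^\bullet$ is Borel, and the isomorphism relation $\{(B,C)\in\mathcal{L}\times\mathcal{L}:B\cong C\}$ is analytic. The last point follows because the witnessing pair $(j_1,j_2)$ in Definition \ref{DefiIsomOperators} can be quantified existentially over a Polish space of candidate linear maps (coded by their values on a fixed dense sequence in the domain), subject to Borel conditions of boundedness, invertibility with bounded inverse, and intertwining. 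Pulling the isomorphism relation back along the Borel map $(B,A)\mapsto(B,A^\bullet)$ and intersecting with $\mathcal{L}\times\mathcal{A}$ preserves analyticity, and since projections of analytic sets are analytic, we conclude that $\mathcal{A}^*$ is analytic.

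The main obstacle I anticipate is checking, in the specific coding of $\mathcal{L}$ via triples $(X,Y,\hat A)$ with $\hat A\in C(\Delta)^{\mathbb{N}}$ fixed in Section \ref{SectionBackground}, that the isomorphism relation on $\mathcal{L}$ is indeed analytic in the literal sense; this is routine but requires writing out the existential quantification over coded linear maps carefully. The coanalyticity of $\mathcal{L}_{\SD}$ depends on the same kind of verification for the projections to $\SB$, but both should follow almost by inspection from the definitions. Beyond these bookkeeping checks, the argument is entirely formal and relies only on Theorem \ref{dual ops} as the nontrivial input.
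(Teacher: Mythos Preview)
Your proposal is correct and follows essentially the same approach as the paper's own proof: separate $\mathcal{A}$ from the complement of $\mathcal{L}_{\SD}$ by Lusin's theorem to obtain a Borel $\mathcal{B}\subset\mathcal{L}_{\SD}$, apply Theorem \ref{dual ops} to get the Borel map $A\mapsto A^\bullet$, and then use that the isomorphism relation on $\mathcal{L}$ is analytic (the paper cites \cite[Lemma 5.17]{BeanlandCausey2017} for the equivalent statement that isomorphic saturation preserves analyticity, whereas you sketch the argument directly). The coanalyticity of $\mathcal{L}_{\SD}$ is already recorded in Subsection \ref{SubsectionCodeBoundedOp}, so your anticipated bookkeeping check is available off the shelf.
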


  We now describe another corollary of Theorem \ref{ThmLSDmap}. As noticed by G. Godefroy in \cite[Problem 5.2]{GodefroySaintRaymond2018}, the result \cite[Theorem 1.1]{Braga2015Fourier} implies that the equivalence class $\langle X\rangle$ of a separable reflexive Banach space is Borel if and only if the equivalence class of its dual $\langle X^*\rangle $ is Borel.   Theorem \ref{ThmLSDmap} allow us to obtain the analogous result for bounded operators, which generalizes this result for Banach spaces.

Before stating this corollary, we need some terminology. Precisely, given a coding for   bounded operator between separable Banach spaces $\cB\subset \cL$, we define the \emph{isomorphic saturation of $\cB$}, denoted by $\langle\cB\rangle$, by 
\[\langle \cB\rangle=\{(B:Z\to W)\in \cL\colon \exists (A:X\to Y)\in \cB,\ B\cong A\}.\] 
If  $\cB$ is a set containing a single operator, say $\cB=\{A:X\to Y\}$, we simply write $\langle A:X\to Y\rangle$, or $\langle A\rangle$ for short.  If moreover  $\cB\subset \cL_{\SD}$, we define its dual collection $\cB^*$ as in Corollary \ref{generalize Dodos} above.

The following is a consequence of Theorem \ref{ThmLSDmap}.

\begin{corollary}\label{CorOpBetweenReflex}
Let $\cB\subset \cL$ be a collection of operators between reflexive spaces. Then $\langle\cB\rangle$ is Borel if and only if $\cB^*$ is Borel. In particular, if $A:X\to Y$ is an operator between reflexive Banach spaces, then $\langle A\rangle$ is Borel if and only if $\langle A^*\rangle$ is Borel.
\end{corollary}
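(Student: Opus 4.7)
The plan is to prove the nontrivial direction, the converse following by running the identical argument with $\cB^*$ in place of $\cB$ and invoking double-dualization in the reflexive setting. Suppose $\langle\cB\rangle$ is Borel. Since isomorphism of operators is preserved under taking adjoints (an iso $(j_1,j_2)$ between $A$ and $A'$ yields the iso $(j_2^*,j_1^*)$ between $(A')^*$ and $A^*$), we have $\cB^*=\langle\cB\rangle^*$, so we may replace $\cB$ by $\langle\cB\rangle$ and assume that $\cB$ itself is an iso-closed Borel set. Writing $\cL_{\mathrm{refl}}$ for the set of operators between reflexive separable Banach spaces, we have $\cB\subset\cL_{\mathrm{refl}}\subset\cL_{\SD}$, the last inclusion because reflexive Banach spaces have separable duals.

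First, apply Theorem \ref{ThmLSDmap} to $\cB$ to obtain a Borel assignment $A\in\cB\mapsto\phi(A)\in\cL$ with $\phi(A)\equiv A^*$. Then $\cB^*$ is the $\cong$-saturation of the Borel image $\phi(\cB)$, so $\cB^*$ is analytic. The key observation is that $\cB^*\subset\cL_{\mathrm{refl}}$, since the adjoint of an operator between reflexive spaces is again between reflexive spaces and $\cong$ preserves reflexivity of the domain and the codomain. It is well known that the set of reflexive separable Banach spaces is coanalytic in $\SB$; accordingly $\cL_{\mathrm{refl}}$ is coanalytic in $\cL$. By the Lusin separation theorem applied to $\cB^*$ (analytic) and $\cL_{\mathrm{refl}}$ (coanalytic), there exists a Borel set $\cD$ with $\cB^*\subset\cD\subset\cL_{\mathrm{refl}}\subset\cL_{\SD}$.

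Now apply Theorem \ref{ThmLSDmap} a second time, to the Borel set $\cD$, obtaining a Borel map $\psi\colon\cD\to\cL$ with $\psi(B)\equiv B^*$ for every $B\in\cD$. The plan is to show that
\[\cB^*=\{B\in\cD:\psi(B)\in\cB\},\]
which, being the preimage of the Borel set $\cB$ under the Borel map $\psi$, is Borel. For the forward inclusion: if $B\in\cB^*$ then $B\in\cD$ and $B\cong A^*$ for some $A\in\cB$, so $\psi(B)\cong B^*\cong A^{**}\cong A\in\cB$ by reflexivity, and $\cB$ being $\cong$-closed forces $\psi(B)\in\cB$. For the reverse inclusion: if $B\in\cD$ satisfies $\psi(B)\in\cB$, then $B^*\cong\psi(B)\in\cB$, and since $B\in\cD\subset\cL_{\mathrm{refl}}$ we have $B\cong B^{**}=(B^*)^*$ with $B^*\in\cB$, so $B\in\cB^*$. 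The converse direction follows by applying this entire argument to $\cB^*$ in place of $\cB$ and noting that $(\cB^*)^*=\langle\cB\rangle$ (by $A^{**}\cong A$ in the reflexive setting). The ``in particular'' statement is obtained by taking $\cB=\{A\}$, for which $\cB^*=\langle A^*\rangle$. The main obstacle, and the core idea, is finding a Borel set on which Theorem \ref{ThmLSDmap} can be applied a second time: this is precisely what Lusin separation provides, sandwiching the a priori only analytic set $\cB^*$ inside a Borel subset of $\cL_{\SD}$ on which dualization becomes Borel.
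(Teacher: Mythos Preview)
Your proof is correct and follows essentially the same approach as the paper: show $\cB^*$ is analytic via a first application of Theorem~\ref{ThmLSDmap}, use Lusin separation to trap it inside a Borel subset of $\cL_{\SD}$, apply Theorem~\ref{ThmLSDmap} a second time on that Borel set, and recognize $\cB^*$ as a Borel preimage. The only notable difference is that you separate inside the coanalytic set $\cL_{\mathrm{refl}}$ rather than $\cL_{\SD}$; this makes the reverse inclusion $\psi^{-1}(\cB)\subset\cB^*$ slightly more direct, since $B\in\cD\subset\cL_{\mathrm{refl}}$ immediately yields $B\cong B^{**}$, whereas the paper's version implicitly recovers reflexivity of $B$ from the fact that $\Phi(B)\in\langle\cB\rangle$ forces $B^*$ (and hence $B$) to be between reflexive spaces. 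One small wording slip: $\phi(\cB)$ is the image of a Borel set under a Borel map, hence analytic rather than ``Borel'' as you wrote, but your conclusion that $\cB^*$ is analytic is unaffected.
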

 
At last, in Section \ref{SectionThmBorelFact} we deal with strongly boundedness over the class of reflexive spaces. As mentioned above, the second named author and R. Causey proved in \cite[Theorem 5.8]{BeanlandCausey2017} that the class of weakly compact operators is strongly bounded over the class of separable reflexive Banach spaces. We are able to obtain the following stronger for the class $\cW_{\SB,\SD}$.

 \begin{theorem}\label{ThmFactoringSubspacesInto}
Let $\mathcal{B}\subset \mathcal{W}_{\SB,\SD}$ be a Borel subset. Then there is a reflexive space $Z \in \SB$ and a $\sigma(\Sigma_1^1)$-measurable $\Psi:\mathcal{B} \to \SB(Z)$ so that each $A=(X,Y,\hat A)\in \cB$ factors through $\Psi(A)$. 
\end{theorem}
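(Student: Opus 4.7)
The plan is to combine a Borel-parametrized Davis--Figiel--Johnson--Pe{\l}czy\'nski (DFJP) factorization with the Beanland--Causey strong boundedness of $\cW$ over separable reflexive spaces (\cite[Theorem 5.8]{BeanlandCausey2017}), and then apply Jankov--von Neumann uniformization to pull each individual factorization back to a subspace of a single reflexive $Z$ in a $\sigma(\Sigma_1^1)$-measurable way.

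First, I would parametrize the DFJP construction. For each $A=(X,Y,\hat A)\in \cB$, let $K_A = \overline{A(B_X)}$, $U_n^A = 2^n K_A + 2^{-n} B_Y$, and let $\|\cdot\|_n^A$ denote the Minkowski functional of $U_n^A$. Set
\[
Z_A = \Bigl\{ y \in Y : \|y\|_{Z_A}^2 := \sum_{n\in\N} (\|y\|_n^A)^2 < \infty \Bigr\}.
\]
Weak compactness of $A$ yields that $Z_A$ is reflexive, and $A = C_A \circ B_A$ with $B_A\colon X\to Z_A$ and $C_A\colon Z_A\to Y$ the canonical maps. All quantities above reduce to countably many norm evaluations on the dense sequences provided by the coding, so the assignment $A\mapsto (Z_A,B_A,C_A)$ is Borel into $\SB\times \cL\times \cL$; here Theorem \ref{ThmLSDmap} enters at the points where one needs to compute dual norms in a Borel manner along the fibers, exploiting the hypothesis $Y\in\SD$ (and hence $Z_A\in\SD$, since $Z_A\hookrightarrow Y$). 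In particular $\cZ := \{Z_A : A\in \cB\}$ is an analytic subset of the separable reflexive spaces in $\SB$.

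Next, apply Beanland--Causey's strong boundedness of $\cW$ over reflexive spaces to the analytic set $\cB$: one obtains a single separable reflexive $Z\in \SB$ such that every $A\in\cB$ factors through a subspace of $Z$, and hence every $Z_A$ is isomorphic to a subspace of $Z$. Form
\[
\cE = \bigl\{(A,j)\in \cB\times \cL : j \text{ is an isomorphic embedding of } Z_A \text{ into } Z\bigr\}.
\]
Being an isomorphic embedding is the existence of a rational $c>0$ with $\|j(z)\|\ge c\|z\|$ for all $z$ in the coded dense sequence of $Z_A$, which is an analytic (in fact Borel) condition in the coding, so $\cE$ is analytic and projects onto $\cB$. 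Jankov--von Neumann uniformization yields a $\sigma(\Sigma_1^1)$-measurable selector $A\mapsto j_A$ of $\cE$, and I define $\Psi(A):=\overline{j_A(Z_A)}\in \SB(Z)$. Measurability of $\Psi$ follows because the Effros--Borel structure on $\SB(Z)$ is generated by the hitting sets $\{W: W\cap U\neq\emptyset\}$ for $U$ in a countable basis of $Z$, and $\overline{j_A(Z_A)}\cap U\neq\emptyset$ is equivalent to the existence of a dense-sequence point $x\in X$ with $j_A(B_A(x))\in U$, a countable disjunction of $\sigma(\Sigma_1^1)$ conditions. The factorization $A=(C_A\circ j_A^{-1})\circ (j_A\circ B_A)$ then witnesses that $A$ factors through $\Psi(A)$.

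The main obstacle is the measurability bookkeeping of the first two steps: verifying that the DFJP data $(Z_A,B_A,C_A)$ are jointly Borel in $A$ and that the relation "$j$ is an isomorphic embedding of $Z_A$ into $Z$" is analytic in the coding $\cL$, where the domain $Z_A$ itself varies with $A$. The former is tedious but standard once one unpacks the coding $\cL$ along dense sequences, using Theorem \ref{ThmLSDmap} to absorb the dual-norm computations hidden in DFJP; the latter requires a careful choice of coded dense sequences in $Z_A$ depending Borel-measurably on $A$. Once these technical verifications are carried out, the Jankov--von Neumann selection delivers $\Psi$ without further effort.
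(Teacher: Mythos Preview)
There is a gap in your second step. From ``every $A\in\cB$ factors through a subspace of $Z$'' you conclude ``hence every $Z_A$ is isomorphic to a subspace of $Z$,'' but this does not follow: the subspace of $Z$ through which $A$ factors (produced by \cite[Theorem 5.8]{BeanlandCausey2017}) need have no relation to your particular DFJP space $Z_A$. The fix is to apply strong boundedness not to $\cB$ but to the analytic set $\cZ=\{Z_A:A\in\cB\}$ of reflexive spaces you already singled out (Dodos--Ferenczi strong boundedness of $\mathrm{REFL}$), which directly yields a reflexive $Z$ into which every $Z_A$ embeds; then your Jankov--von Neumann selection of an embedding $j_A$ and the definition $\Psi(A)=j_A(Z_A)$ go through. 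A second, smaller issue: your appeal to Theorem~\ref{ThmLSDmap} is misplaced, since the Minkowski gauges in DFJP are primal and no dual-norm computation arises. In the paper the hypothesis $Y\in\SD$ is used for something else entirely, namely to invoke Bossard's Borel basis map $Y\mapsto(e_n^Y)_n$, which is what allows each abstract interpolation space to be realized inside $C(\Delta)$ Borel-measurably with a shrinking basis; you assume such a realization $A\mapsto Z_A\in\SB$ without saying how it is produced.

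The paper's route is genuinely different and more constructive. Rather than black-boxing strong boundedness, it (i) uses the Bossard basis and an adapted DFJP construction to produce a Borel map $\sigma:\cB\to C(\Delta)^\N$ with $\sigma(A)$ a normalized shrinking boundedly-complete basic sequence such that $A$ factors through a subspace of $\overline{\Span}\{\sigma(A)\}$ (Theorem~\ref{ThmFactoringBasis}), (ii) codes each $\overline{\Span}\{\sigma(A)\}$ by a rational space $F_{\psi(\sigma(A))}$ via a Borel $\psi:\mathrm{mbs}\to\N^\N$, and (iii) amalgamates all the rational spaces over a pruned tree on $\N\times\N$ (Kurka) followed by one final DFJP interpolation to build $Z$ explicitly. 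Jankov--von Neumann enters only at the end, to select a branch over $\psi(\sigma(A))$. The payoff of this explicit construction is the stronger Theorem~\ref{ThmFactoringSubspaces}: one also obtains a $\sigma(\Sigma_1^1)$-measurable pointwise first factor $\Phi_A:X\to\Psi(A)$ with $\|\Phi_A\|\le 1$, which your embedding-selection approach does not directly provide.
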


Hence, the result above shows not only that given an analytic $\cA\subset\cW_{\SB,\SD}$ one can find a reflexive $Z $ so that each $A\in \cA$ factors through a subspace of $Z$, but moreover that the choice of this subspace can be done is a $\sigma(\Sigma_1^1)$-measurable manner.  We refer the reader to Theorem \ref{ThmFactoringSubspacesInto} below for a stronger more technical result.

We finish the paper presenting some open problems in Section \ref{SectionOpenProb}.

\section{Preliminaries}\label{SectionBackground}

The Banach space theory terminology  used herein is standard and we refer the reader to \cite{AlbK-book}. We emphasize here that all Banach spaces are considered to be over the reals and, given a Banach space $X$, we denote its closed unit ball by $B_X$. We also write $X\cong Y$ and $X \equiv Y$ to denote that $X$ and $Y$ are isomorphic and isometric, respectively. For the background on descriptive set theory, we refer the reader to \cite{Ke-book}.

\subsection{Polish spaces and coding  separable Banach spaces}\label{SubsectionSB}

A separable topological space $(X,\tau)$, where $\tau$ is a topology on the set $X$, is called a \emph{Polish space} if there exists a complete metric on $X$ which generates the topology $\tau$. In this case, we say that $\tau$ is a \emph{Polish topology}.

A measurable space $(X,\mathcal{A})$, where $\mathcal{A}$ is a $\sigma$-algebra on the set $X$, is called a \emph{standard Borel space} if there exists a Polish topology on $X$ so that $\cA$ is the Borel $\sigma$-algebra generated by this topology.

Since the class of all separable Banach spaces is a proper class, i.e., it is not a set, in order to study the descriptive set theoretical properties of the class of all separable Banach spaces it is necessary to first code this class as a set. This is usually done as follows. Denote the Cantor set by $\Delta$. Since the Banach space of continuous real-valued functions on the Cantor set, $C(\Delta)$, is isometrically universal for all separable Banach spaces\footnote{i.e., every separable Banach space embeds into $C(\Delta)$ isometrically \cite[Page 79]{Ke-book}.}, we define 
\[\SB=\{X\subset C(\Delta): X\text{ is a closed linear subspace}\}\]
and endow $\SB$ with the Effros-Borel $\sigma$-algebra, i.e., the $\sigma$-algebra generated by the sets
 \[\Big\{\{F\in \SB: U\cap F\neq \emptyset\}: U\subset C(\Delta) \text{ is open}\Big\}.\]
It is well-known that $\SB$  endowed with the Effros-Borel structure is a standard Borel space \cite[Theorem 2.2]{Do-Book}. Similarly, if $Z\in \SB$, we define 
\[\SB(Z)=\{X\in \SB: X\subset Z\}.\]
So $\SB(Z)$ is a Borel subset of $\SB$, hence a standard Borel space.

With this coding in hand, it makes sense to ask whether specific classes of separable Banach spaces are Borel, analytic, coanalytic, etc. Moreover, any other ``reasonable coding" for the separable Banach spaces agrees with this coding (see \cite[Proposition  2.8]{Bos-FunD}).

\subsection{Coding  bounded operators}\label{SubsectionCodeBoundedOp}
Let us introduce    the coding for operators between separable Banach spaces defined in \cite[Section 4]{BeanlandFreeman2014} (cf. \cite[Section 8.2]{BCFrWa-JFA}). Throughout this paper, let  \[\Big(d_n:\SB\to C(\Delta)\Big)_n\] be a sequence of Borel maps so that  (1) $\{d_n(X)\}_n$ is a dense subset of   $X$ for all $X\in \SB$, and (2) for all $m,k\in \N$ and all $r,q\in \Q$ there exists $n\in\N$ such that $d_n(X)=rd_k(X)+qd_m(X)$ -- the existence of such sequence is given by   Kuratowski
and Ryll-Nardzewski's selection theorem \cite[Theorem 12.13]{Ke-book}. 

We use the notation  $\Q_{>0}=\Q\cap (0,\infty)$ throughout.

\begin{definition}
We define the \emph{coding for all bounded operators between separable Banach spaces} as the subset $\cL\subset \SB\times \SB \times C(\Delta)^\N$ given by
\begin{align*}
(X,Y,\hat A)&\in  \cL\ \\
\Leftrightarrow\  &\Big(\forall n\in\N, \ \hat A(n)\in Y\Big)\wedge\Big(\forall n,m,k\in\N,\ \forall r,q\in \Q,\\
&  d_n(X)=rd_k(X)+qd_m(Y)\Rightarrow \hat A(n)=r\hat A(k)+q\hat A(m)\Big)\wedge\\
&\Big(
\exists L\in \Q_{>0},\ \forall (a_i)_i\in \Q^{<\N},\ \Big\|\sum_ia_i\hat A(i)\Big\|\leq L\Big\|\sum_ia_id_i(X)\Big\|\Big).
\end{align*} 
\end{definition}

Firstly, notice that  $\cL$ is clearly a Borel subset of $\SB\times \SB \times C(\Delta)^\N$  (see \cite[Section 8.2]{BCFrWa-JFA} for details), so $\cL$ is a standard Borel space. We now explain how $\cL$ codes  the  operators between separable Banach spaces. Given Banach spaces $X,Y\in \SB$ and a bounded operator $A:X\to Y$, we associate to $A:X\to Y$ the triple $(X,Y,\hat A)$, where $\hat A=(A(d_n(X)))_n$. Clearly $(X,Y,\hat A)\in \cL$. On the other hand, if $(X,Y,\hat A)\in \cL$, define $A:X\to Y$ to be the unique operator such that $A(d_n(X))=\hat A(n)$ for all $n\in\N$. For the remaining of this paper, using the identification just described, we make no distinction between the triple $(X,Y,\hat A)\in \cL$ and the operator $A:X\to Y$ (cf. \cite[Claim 8.4]{BCFrWa-JFA}). Moreover, if the spaces $X$ and $Y$ can be neglected, we simply write $A\in \cL$.

Let $\SD=\{X\in \SB : X^*\text{ is separable}\}$ and define
\[\cL_{\SD}=\{(X,Y,\hat A)\in \cL: X,Y\in \SD\}.\]
So, $\cL_{\SD}=\cL\cap (\SD\times\SD\times C(\Delta)^\N)$. Since $\SD$ is coanalytic  \cite[Corollary 3.3(ii)]{Bos-FunD}, so is $\cL_{\SD}$. Moreover, let $\Tr$ be the standard Borel space of trees on $\N$ (see Subsection \ref{SubsectionLeftmost} for  definitions regarding  trees). Since $\SD$ is complete coanalytic (see the proof of \cite[Corollary 3.3(ii)]{Bos-FunD}), there exists a Borel reduction $\varphi:\Tr\to \SB$  of the well-founded trees, $\WF$, to $\SD$, i.e., $\varphi$ is a Borel map so that $\varphi^{-1}(\SD)=\WF$. Define $\Phi:\Tr\to \cL$ by letting
\[\Phi(T)=(\varphi(T),\varphi(T), (d_n(\varphi(T)))_n)\]
for all $T\in \Tr$. Then $\Phi$ is a Borel reduction of $\WF$ to $\cL_{\SD}$. So, $\cL_{\SD}$ is also complete coanalytic. In particular, $\cL_\SD$ is not Borel.  

We denote the coding for the weakly compact operators by $\cW$, i.e., \[\cW=\{ A\in \cL : A\text{ is weakly compact}\}.\] By \cite[Proposition 22]{BeanlandFreeman2014}, $\cW$ is coanalytic. At last, 
we denote the coding for the weakly compact operators between spaces with separable dual by $\cW_{\SD}$, i.e., \[\cW_{\SD}=\cW\cap\cL_\SD.\]
Alternatively, using the terminology of Definition \ref{DefiWAB}, we have $\cW_{\SD}=\cW_{\SD,\SD}$.

\subsection{Hyperspace}
Denote the set  of all compact subsets of a compact metric space $S$ by $\cK(S)$, and endow $\cK(S)$ with the Vietoris topology. Since $S$ is a metric space, the Vietoris topology is generated by the Hausdorff metric, and this metric makes $\cK(S)$ into a compact Polish space. The reader can find more details about $\cK(S)$ and its topology in \cite[Section 4.F]{Ke-book}.

The next  lemma is \cite[Lemma 3.7]{Braga2015Fourier} and it will play an important role in Section \ref{SectionThmBorelFact}.   Recall that a critical ingredient towards showing that a separable Banach space $X$ embeds isometrically into $C(\Delta)$ is the fact that $B_{X^*}$ is separable and metrizable in the weak$^*$ topology and thus image of the Cantor set under some continuous map. The next lemma gives us a way to parametrize the selection of this continuous surjection and it is our tool to bring abstract Banach spaces into a concrete  isometric copy of it in $C(\Delta)$.

\begin{lemma} \label{LemmaH} 
Let $\Delta$ be the Cantor set. There exists a Borel function 
\[Q:\cK(\Delta)\to C(\Delta, \Delta)\]
such that, for each $K\in \cK(\Delta)$, $Q(K):\Delta\to \Delta$ is a continuous function onto $K$. Therefore, if $M$ is a compact metric space, and $h:\Delta \to M$ is a continuous surjection, we have that
\[H:K\in\cK(M)\mapsto  h\circ Q(h^{-1}(K))\in C(\Delta, M),\]
is a Borel function and, for each $K\in \cK(M)$, $H(K):\Delta\to M$ is a continuous function onto $K$.
\end{lemma}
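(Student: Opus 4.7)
The plan is to build $Q$ explicitly by coding nonempty compact subsets of $\Delta \cong 2^\N$ as pruned binary subtrees of $2^{<\N}$ and then defining a canonical level-preserving projection from $2^{<\N}$ onto each such subtree. To each nonempty $K \in \cK(\Delta)$ I associate the tree $T_K = \{s \in 2^{<\N} : N_s \cap K \neq \emptyset\}$, where $N_s$ denotes the basic clopen neighborhood above $s$. Then $T_K$ is pruned with $[T_K] = K$, and for every fixed $s$ the set $\{K : s \in T_K\}$ is Vietoris open, so $K \mapsto T_K$ depends on $K$ in a Borel manner.

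With the trees in hand I would recursively define $q_K : 2^{<\N} \to T_K$ preserving length by $q_K(\emptyset) = \emptyset$ and, given $q_K(s) = t$, setting $q_K(s \cat i) = t \cat i$ whenever $t \cat i \in T_K$ and otherwise setting $q_K(s \cat i) = t \cat (1-i)$ (which lies in $T_K$ because $T_K$ is pruned). Since $q_K(s \cat i)$ always extends $q_K(s)$, the assignment $Q(K)(x) = \lim_n q_K(x|n)$ is a well-defined continuous map $\Delta \to [T_K] = K$. Surjectivity follows by a matching induction: given $y \in K$, build $x$ so that $q_K(x|n) = y|n$ for all $n$; in the ``both children in $T_K$'' case the rule forces the choice $x(n) = y(n)$, while in the ``only one child in $T_K$'' case either value of $x(n)$ produces the prefix $y|(n+1)$. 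Borel measurability of $Q$ as a map into $C(\Delta,\Delta)$ then reduces to checking that each subbasic compact-open neighborhood $\{f : f(N_s) \subseteq N_t\}$ pulls back to a finite Boolean combination of the open sets $\{K : r \in T_K\}$ via the recursive definition of $q_K$.

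For the ``moreover'' clause I would use that $K \mapsto h^{-1}(K)$ is Vietoris-continuous from $\cK(M)$ to $\cK(\Delta)$ (this follows from uniform continuity of $h$ on the compact space $\Delta$), that postcomposition with $h$ is a continuous map $C(\Delta,\Delta) \to C(\Delta,M)$, and that $Q$ is Borel by the above; thus $H = h \circ Q \circ h^{-1}$ is Borel, and surjectivity is immediate from $H(K)(\Delta) = h(h^{-1}(K)) = K$. The main obstacle I anticipate is not any single step but the bookkeeping needed to verify Borel measurability of $Q$ cleanly; the tree-projection rule (copying the surviving child when only one is in $T_K$) is chosen precisely so that the construction works uniformly over all of $\cK(\Delta) \setminus \{\emptyset\}$ without splitting cases by cardinality or any other structural data of $K$.
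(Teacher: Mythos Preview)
The paper does not prove this lemma; it simply cites it as \cite[Lemma 3.7]{Braga2015Fourier}. Your construction of $Q$ via the tree projection $q_K$ is correct and is essentially the standard argument one would expect.

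There is, however, a genuine (though easily repaired) gap in your treatment of the ``moreover'' clause. The map $K\mapsto h^{-1}(K)$ from $\cK(M)$ to $\cK(\Delta)$ is \emph{not} Vietoris-continuous in general, and uniform continuity of $h$ does not help: uniform continuity yields continuity of the \emph{direct image} map $A\mapsto h(A)$, not of the preimage map. For a counterexample, let $h\colon\{0,1\}\times 2^\N\to\{0\}\cup\{1/n:n\geq 1\}$ send every point of $\{0\}\times 2^\N$ to $0$, send $(1,1,1,\ldots)$ to $0$, and send $(1,x)$ to $1/(k+1)$ when $x$ begins with exactly $k$ ones followed by a zero. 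Then $\{1/n\}\to\{0\}$ in $\cK(M)$, yet $h^{-1}(\{1/n\})\subset\{1\}\times 2^\N$ stays at positive Hausdorff distance from the large piece $\{0\}\times 2^\N\subset h^{-1}(\{0\})$.

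What \emph{is} true, and what suffices for your purposes, is that $K\mapsto h^{-1}(K)$ is Borel. For open $U\subset\Delta$ one has
\[
\{K\in\cK(M):h^{-1}(K)\cap U\neq\emptyset\}=\{K\in\cK(M):K\cap h(U)\neq\emptyset\},
\]
and since every open $U\subset\Delta$ is a countable union of clopen sets $V_n$, the right-hand side equals $\bigcup_n\{K:K\cap h(V_n)\neq\emptyset\}$, an $F_\sigma$ set (each $h(V_n)$ is compact, hence each $\{K:K\cap h(V_n)\neq\emptyset\}$ is Vietoris-closed). With this correction the composition $H=(h\circ{-})\circ Q\circ (h^{-1}({-}))$ is Borel and your argument goes through.
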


\subsection{Trees}\label{SubsectionLeftmost}

For a set $\Lambda$, we let $\Lambda^{<\N}=\bigcup_{n\in\N}\Lambda^n$. If $\bar n=(n_1,\ldots, n_k)\in \Lambda^{<\N}$, we write $|\bar n|=k$, and $|\bar n|$ is called the \emph{length} of $\bar n$. If $\ell\leq k$, we write $\bar n\restriction \ell=(n_1,\ldots, n_\ell)$. Analogously, if $\bar n=(n_j)_{j=1}^\infty\in \Lambda^\N$ and $\ell\in\N$, we write $\bar n\restriction \ell =(n_1,\ldots,n_\ell)$. Define an order $\preceq $ on $\Lambda^{<\N}$ by setting $\bar n\preceq \bar m$ if $\bar n$ is an initial segment of $\bar m$, i.e., if $|\bar n|\leq |\bar m|$ and  $\bar m\restriction |\bar n|=\bar n$. If $\bar m\in \Lambda^\N$, we define $\bar n\preceq \bar m$ analogously. A subset $T\subset \Lambda^{<\N}$ is called a \emph{tree on $\Lambda$} if for all $t\in T$ and all $s\in \Lambda^{<\N}$,   $s\preceq t$ implies $s\in T$. A tree $T$ is called \emph{pruned} if for all $s\in T$ there exists $t\in T\setminus \{s\}$ with $s\preceq t$. 

If $T$ is a tree on a set $\Lambda$, an element $\beta\in \Lambda^\N$ is called a \emph{branch of $T$} if $\beta\restriction {k}\in T$ for all $k\in\N$, and we denote the subset of $\Lambda^\N$ containing all branches of $T$ by $[T]$. We say that a tree $T$ on $\Lambda$ is \emph{ill-founded} if $[T]\neq \emptyset$, otherwise $T$ is called \emph{well-founded}. If $\Lambda =\N$, we denote the set all all trees on $\N$ by $\Tr$, and the subsets of all ill-founded trees and well-founded trees by $\IF$ and $\WF$, respectively.

\section{The adjoint map as a Borel function}\label{SectionDUAL}
 
In this section, we prove Theorem \ref{ThmLSDmap}. As mentioned in the introduction, Theorem \ref{ThmLSDmap} is a generalization of \cite[Theorem 1.1]{Braga2015Fourier}. To prepare for the proof, we first   introduce  a coding for the unit ball of the duals of separable Banach spaces as compact subsets of the product space $[-1,1]^\N$  (we follow the approach of \cite[Section 2.1.2]{Do-Book}). 

 Given $X\in \SB$, define $K_{X^*}\subset [-1,1]^\N $ by\footnote{As a rule, we denote elements of a space $X$ by $x$, functionals in the dual $X^* $ by $f$,  elements in the product  space $[-1,1]^\N$ by $x^*$, and their $n$-th coordinate by $x^*_n$.}
\[K_{X^*}=\Big\{x^*\in [-1,1]^\N:  \exists f\in B_{X^*},  \ \forall n\in\N,\   x^*_n=\frac{f(d_n(X))}{\|d_n(X)\|}\Big\},\]
where if $d_n(X)=0$, we let $x^*_n=0$ above. It is easy to see that the subset $\D\subset \SB\times [-1,1]^\N$ given by 
\[(X,x^*)\in \D\Leftrightarrow x^*\in K_{X^*}\]
is Borel and that $K_{X^*}$ is a compact  subset of $[-1,1]^\N$ for all $X\in \SB$ (see \cite[Section 2.1.2]{Do-Book}). Therefore, by \cite[Theorem 28.8]{Ke-book}, the map 
\[X\in \SB\mapsto K_{X^*}\in \cK([-1,1]^\N)\]
is Borel. Moreover,  one can easily see that for each $X\in \SB$ there exists a linear isometry \[i_X:\overline{\Span}\{K_{X^*}\}\to {X^*}\] such that 
\begin{equation}\label{EqPropIsometryiX}
 f=i_{X}(x^*)\text{ implies } x^*_n=
 \left\{\begin{array}{ll}
 \frac{f(d_n(X))}{\|d_n(X)\|},&\text{ if }d_n(X)\neq 0,\\
 0,&\text{ if }d_n(X)=0,\end{array}\right. 
\text{ for all }n\in\N\tag{$*$}
\end{equation}  
 (see \cite[Section 2.1.2]{Do-Book} or \cite[Section 3]{Braga2015Fourier}).
 
We will also need a Borel way of computing the functional evaluation of an element $x^*\in K_{X^*}$ at some $x\in X $. We proceed as follows. Consider the Borel set
\[\A=\{(X,x,x^*)\in \SB\times C(\Delta)\times [-1,1]^\N:  x\in X,x^*\in K_{X^*}\},\]
and define a map $\alpha:\A\to \R$ by letting 
\[\alpha(X,x,x^*)=\langle i_X(x^*),x\rangle\]
for all $(X,x,x^*)\in \A$, where, given $f\in X^*$ and $x\in X$, $\langle f,x\rangle$ denotes the functional evaluation of $f$ at $x$, i.e., $f(x)$. By \cite[Lemma 3.8]{Braga2015Fourier}, $\alpha$ is a  Borel map. 

We are now in good shape to present a proof for Theorem \ref{ThmLSDmap}.

\begin{proof}[Proof of Theorem \ref{dual ops}]
Since $\cB$ is Borel, the subsets
\[\A_1=\{X\in \SB:  \exists Y\in \SB,\ \exists \hat A\in C(\Delta)^\N\text{ s.t. }(X,Y,\hat A)\in \cB\}\]
and 
\[\A_2=\{Y\in \SB:  \exists X\in \SB,\ \exists \hat A\in C(\Delta)^\N\text{ s.t. }(X,Y,\hat A)\in \cB\}\]
are analytic. Since $\A_1\cup\A_2\subset \SD$ and $\SD$ is coanalytic,   Lusin's separation theorem \cite[Theorem 14.7]{Ke-book} gives  a Borel subset $\B\subset \SD$ such that $\A_1\cup\A_2\subset \B$. 
 Define
\[\E=\{(X,x^*,y)\in \B\times [-1,1]^\N\times \Delta:  x^*\in K_{X^*}\} \]
and let   $\gamma:\E\to \R$ be the Borel map given by  \cite[Corollary 3.10]{Braga2015Fourier}.  The properties of $\gamma$ which are important for our goals are 
\begin{enumerate}
\item $\gamma(X,x^*,\cdot)\in C(\Delta)$ for all $X\in \B$ and all $x^*\in K_{X^*}$, and 
\item $\|\gamma(X,x^*,\cdot)\|=\|x^*\|_\infty$ for all $X\in \B$ and all $x^*\in K_{X^*}$.
\end{enumerate} Given $X\in \B$ and $x^*\in K_{X^*}$, the reader should interpret $\gamma(X,x^*,\cdot)$ as the coding -- back in the universal space $C(\Delta)$ -- of the functional of $X^*$ which is coded by $x^*\in K_{X^*}$.

For each $X\in \B$, define 
\[X^\bullet=\Big\{g\in C(\Delta):  \exists x^*\in K_{X^*},\ \exists \lambda \in \R, \text{ s.t. } g(y)=\lambda\gamma(X,x^*,y),\  \forall y\in \Delta\Big\}.\]
It is shown in the proof of \cite[Theorem 1.1]{Braga2015Fourier} that the assignment $X\in \B\mapsto X^\bullet\in \SB$ is Borel and that $X^\bullet$ is linearly isometric to  $X^*$ for all $X\in \B$. Moreover, let \[\cA=\{(X,x,g)\in \B\times C(\Delta)\times C(\Delta):  x\in X, g\in X^\bullet\}\] and for each $(X,x,g)\in \cA$, with $g=\lambda\gamma(X,x^*,\cdot)$, define 
\[\langle g,x\rangle_X=\lambda\alpha(X,x,x^*).\]
Then $\cA$ is Borel and it was proved in the proof of \cite[Theorem 1.1]{Braga2015Fourier}  that $\langle g,x\rangle_X$ does not depend on the representative $\lambda\gamma(X,x^*,\cdot)$ of $g$. At last, the proof of \cite[Theorem 1.1]{Braga2015Fourier} also shows that \[(X,x,g)\in \cA\mapsto  \langle g,x\rangle_X\in \R\] is Borel and that, for each $X\in \B$, 
\begin{enumerate}\setcounter{enumi}{2}
\item $\langle\cdot,\cdot\rangle_X$ is bilinear and norm continuous, and 
\item $g\in X^\bullet\mapsto \langle g,\cdot\rangle_X\in X^*$ is a surjective linear isometry. 
\end{enumerate}

 We now return to the proof of the theorem.  For that, we  define an assignment $(X,Y,\hat A)\in \cB\mapsto \hat A^\bullet\in C(\Delta)^\N$ so that the sequence $\hat A^\bullet\in C(\Delta)^\N$ will work as the coding of the adjoint operator $A^*:Y^*\to X^*$ analogously as $\hat A$ codes the operator $A:X\to Y$. For each $n\in\N$ define a map $x^*_n:\cB\to \R^\N$ by letting 
\[x^*_n(X,Y,\hat A)=\Big(\Big\langle  d_n(Y^\bullet),\frac{\hat A(m)}{\|d_m(X)\|}\Big\rangle_Y\Big)_m\in  \R^\N,\]
where $ \hat A(m)/\|d_m(X)\|$ is taken to be $0$ above if $d_m(X)=0$. Since the maps $(X,Y,\hat A)\in \cB\mapsto \hat A(m)\in C(\Delta)$, $(X,x,g)\in \cA\mapsto \langle g,x\rangle_X\in \R$, and $X\in \SB\mapsto d_m(X)$ are Borel for all $m\in\N$,  the  map $x^*_n$ is a Borel map for all $n\in\N$. 

For each $n\in\N$, define $\hat A^\bullet(n)\in C(\Delta)$ by letting $\hat A^\bullet(n)=0$ if $x^*_n(X,Y,\hat A)=0$, and 
\begin{equation}\label{EqFormulaHatABullet}
\hat A^\bullet(n)=\|x^*_n(X,Y,\hat A)\|\cdot \gamma\Big(X,\frac{x^*_n(X,Y,\hat A)}{\|x^*_n(X,Y,\hat A)\|},\cdot\Big)\tag{$**$}
\end{equation}
otherwise. By the definition of $X^\bullet$, $\hat A^\bullet(n)\in X^\bullet$ for all $n\in\N$.

 Clearly, $(X,Y,\hat A,y)\in \cB\times \Delta\mapsto \hat A^\bullet(n)(y)\in \R$ is Borel, which implies that   $(X,Y,\hat A)\in \cB\mapsto \hat A^\bullet(n)\in C(\Delta)$ is Borel for all $n\in\N$. So,  $(X,Y,\hat A)\in \cB\mapsto \hat A^\bullet\in C(\Delta)^\N$ is Borel. In particular, the assignment  \[(X,Y,\hat A)\in \cB\mapsto (Y^\bullet,X^\bullet, \hat A^\bullet)\in \SB\times \SB\times  C(\Delta)^\N\] is Borel.

\begin{claim}
For each $(X,Y,\hat A)\in \cB$, we have that $(Y^\bullet,X^\bullet, \hat A^\bullet)\in \cL$. In particular,   the assignment $(X,Y,\hat A)\in \cB\mapsto (Y^\bullet,X^\bullet, \hat A^\bullet)\in \cL$ is well-defined (and it is Borel). 
\end{claim}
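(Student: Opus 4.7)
My plan is to verify the three defining conditions of $\cL$ for the triple $(Y^\bullet,X^\bullet,\hat A^\bullet)$; the ``in particular'' part of the claim is then automatic, since the assignment $(X,Y,\hat A)\mapsto(Y^\bullet,X^\bullet,\hat A^\bullet)$ into $\SB\times\SB\times C(\Delta)^\N$ has already been shown above to be Borel, and $\cL$ is a Borel subset of $\SB\times\SB\times C(\Delta)^\N$. The first defining condition---that $\hat A^\bullet(n)\in X^\bullet$ for every $n$---is immediate from the defining formula: either $\hat A^\bullet(n)=0\in X^\bullet$, or $\hat A^\bullet(n)=\lambda\,\gamma(X,x^*,\cdot)$ with $\lambda=\|x^*_n(X,Y,\hat A)\|$ and $x^*=x^*_n(X,Y,\hat A)/\lambda\in K_{X^*}$, which is exactly the defining form of an element of $X^\bullet$.

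The heart of the argument is to identify $\hat A^\bullet(n)$, transported into $X^*$ via the isometry $g\in X^\bullet\mapsto\langle g,\cdot\rangle_X\in X^*$, with $A^*(f_n)$, where $f_n:=\langle d_n(Y^\bullet),\cdot\rangle_Y\in Y^*$ is the image of $d_n(Y^\bullet)$ under the analogous isometry $Y^\bullet\cong Y^*$. The key calculation is to verify that the sequence $x^*_n(X,Y,\hat A)$ satisfies the coordinate description $(\ref{EqPropIsometryiX})$ with respect to $A^*f_n$: since $\hat A(m)=A(d_m(X))$ by the coding convention, the $m$-th coordinate of $x^*_n(X,Y,\hat A)$ equals
\[
\Big\langle d_n(Y^\bullet),\,\tfrac{A(d_m(X))}{\|d_m(X)\|}\Big\rangle_Y \;=\; \frac{f_n(A(d_m(X)))}{\|d_m(X)\|} \;=\; \frac{(A^*f_n)(d_m(X))}{\|d_m(X)\|},
\]
which is precisely what $(\ref{EqPropIsometryiX})$ demands. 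Hence $i_X(x^*_n(X,Y,\hat A))=A^*f_n$, and chasing the definitions of $\gamma$ and $\langle\cdot,\cdot\rangle_X$ then yields $\langle\hat A^\bullet(n),x\rangle_X=(A^*f_n)(x)$ for every $x\in X$.

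With this identification in hand, the remaining two conditions follow by transport. For the algebraic compatibility, if $d_n(Y^\bullet)=r\,d_k(Y^\bullet)+q\,d_m(Y^\bullet)$ then bilinearity of $\langle\cdot,\cdot\rangle_Y$ gives $f_n=r f_k+q f_m$, linearity of $A^*$ yields $A^*f_n=r\,A^*f_k+q\,A^*f_m$, and injectivity of the isometry $X^\bullet\cong X^*$ forces $\hat A^\bullet(n)=r\,\hat A^\bullet(k)+q\,\hat A^\bullet(m)$. For the boundedness condition, I would pick any $L\in\Q_{>0}$ with $L\geq\|A\|$ and compute, for any finite rational sequence $(a_i)$,
\[
\Big\|\sum_i a_i\,\hat A^\bullet(i)\Big\|_{X^\bullet} \;=\; \Big\|A^*\Big(\sum_i a_i f_i\Big)\Big\|_{X^*} \;\leq\; \|A\|\cdot\Big\|\sum_i a_i f_i\Big\|_{Y^*} \;=\; \|A\|\cdot\Big\|\sum_i a_i\,d_i(Y^\bullet)\Big\|_{Y^\bullet},
\]
which gives the required inequality with constant $L$. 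I do not expect a serious obstacle beyond some mechanical bookkeeping: handling the degenerate cases $d_m(X)=0$ and $x^*_n(X,Y,\hat A)=0$ (for which the corresponding entries are set to $0$ by convention, consistent with the identification with $A^*$), and tracking the normalizing factors $\|d_m(X)\|$ throughout the coordinate description.
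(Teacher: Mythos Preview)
Your proposal is correct and rests on the same computation as the paper, but you organize it differently. The paper does not first identify $\hat A^\bullet(n)$ with $A^*f_n$; instead it directly computes, for arbitrary $q_1,\dots,q_k\in\Q$ and $n_1,\dots,n_k\in\N$,
\[
\Big\|\sum_{i=1}^k q_i\hat A^\bullet(n_i)\Big\|
=\sup_{m}\Big|\Big\langle\sum_{i=1}^k q_i d_{n_i}(Y^\bullet),\,\tfrac{\hat A(m)}{\|d_m(X)\|}\Big\rangle_Y\Big|
\leq \|A\|\cdot\Big\|\sum_{i=1}^k q_i d_{n_i}(Y^\bullet)\Big\|,
\]
and observes that this single inequality yields both the boundedness condition and (by specializing to $d_n(Y^\bullet)-r\,d_k(Y^\bullet)-q\,d_m(Y^\bullet)=0$) the linearity compatibility condition at once. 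Your route---making the identification $\langle\hat A^\bullet(n),\cdot\rangle_X=A^*f_n$ explicit and then checking the three conditions separately---is essentially the content of the paper's \emph{next} claim (that $A^\bullet$ is isometric to $A^*$) pulled forward; it is a bit more conceptual, while the paper's version is terser. One small point you gloss over: to invoke $\gamma$ and $\langle\cdot,\cdot\rangle_X$ you need $x^*_n(X,Y,\hat A)/\|x^*_n(X,Y,\hat A)\|\in K_{X^*}$, which follows because $\|x^*_n(X,Y,\hat A)\|_\infty=\|A^*f_n\|_{X^*}$ (density of $(d_m(X))_m$), so the normalized sequence has exactly the coordinates prescribed by $(\ref{EqPropIsometryiX})$ for $A^*f_n/\|A^*f_n\|\in B_{X^*}$.
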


\begin{proof}
Fix $(X,Y,\hat A)\in \cB$. Let $q_1,\ldots,q_k\in \Q$ and $n_1,\ldots, n_k\in\N$. Since $g\in X^\bullet\mapsto \langle g,\cdot\rangle_X\in X^*$ is a surjective linear isometry and $\langle\cdot,\cdot\rangle_X$ is bilinear, we have that 
\begin{align*}
\Big\|\sum_{i=1}^kq_i\hat A^\bullet(n_i)\Big\|&=\sup_{m\in\N}\Big|\Big\langle\sum_{i=1}^kq_i\hat A^\bullet(n_i), \frac{d_m(X)}{\|d_m(X)\|}\Big\rangle_X\Big|\\
&=\sup_{m\in\N}\Big|\sum_{i=1}^k q_i\Big\langle \hat A^\bullet(n_i),  \frac{d_m(X)}{\|d_m(X)\|}\Big\rangle_X\Big|.
\end{align*}
By the definition of $\langle\cdot,\cdot\rangle_X$ and using the representative of $\hat A^\bullet(n_i)$ given in \eqref{EqFormulaHatABullet}, this gives that
\begin{align*}
\Big\|\sum_{i=1}^k &q_i\hat A^\bullet(n_i)\Big\|\\
&=\sup_{m\in\N}\Big|\sum_{i=1}^k q_i\|x^*_{n_i}(X,Y, \hat A)\|\cdot \alpha\Big(X, \frac{d_m(X)}{\|d_m(X)\|},\frac{x^*_{n_i}(X,Y,\hat A)}{\|x^*_{n_i}(X,Y,\hat A)\|}\Big)\Big|.
\end{align*}
At last, since $\alpha(X,x,x^*)=\langle i_{X}(x^*),x\rangle$, the property of $i_X$ given in \eqref{EqPropIsometryiX} implies that
\begin{align*}
\Big\|\sum_{i=1}^kq_i\hat A^\bullet(n_i)\Big\|&=\sup_{m\in\N}\Big|\sum_{i=1}^k q_i\Big\langle  d_{n_i}(Y^\bullet),  \frac{\hat A(m)}{\|d_m(X)\|}\Big\rangle\Big|\\
&=\sup_{m\in\N}\Big|\Big\langle\sum_{i=1}^k q_i d_{n_i}(Y^\bullet),  \frac{\hat A(m)}{\|d_m(X)\|}\Big\rangle\Big|\\
&\leq \sup_{m\in\N}\Big\|\frac{\hat A(m)}{\|d_m(X)\|}\Big\|\cdot\Big\|\sum_{i=1}^k q_i d_{n_i}(Y^\bullet)\Big\|  \\
&=\|A\|\cdot \Big\|\sum_{i=1}^k q_i d_{n_i}(Y^\bullet)\Big\|.
\end{align*}
Since $q_1,\ldots,q_k\in \Q$ and $n_1,\ldots, n_k\in\N$ are arbitrary, it follows that   $(Y^\bullet, X^\bullet, \hat A^\bullet)\in \cL$, i.e., $\hat A^\bullet $ codes an operator $A^\bullet:Y^\bullet\to X^\bullet$.
\end{proof}

\begin{claim}\label{ClaimABulletIsoA*}
The operator $A^\bullet:Y^\bullet\to X^\bullet$ is isometric to $A^*:Y^*\to X^*$ for all $(X,Y,\hat A)\in \cB$.
\end{claim}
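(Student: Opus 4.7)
The plan is to exhibit an explicit isometry $(j_Y,j_X)$ between $A^\bullet$ and $A^*$, where $j_X\colon X^\bullet\to X^*$ and $j_Y\colon Y^\bullet\to Y^*$ are the surjective linear isometries $g\mapsto \langle g,\cdot\rangle_X$ and $h\mapsto\langle h,\cdot\rangle_Y$ produced in property (4) of the preparatory paragraphs. By Definition \ref{DefiIsomOperators}, the task reduces to checking that the square
\[\xymatrix{Y^\bullet\ar[r]^{A^\bullet}\ar[d]_{j_Y} & X^\bullet\ar[d]^{j_X}\\ Y^*\ar[r]^{A^*} & X^*}\]
commutes.

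Since both $j_X\circ A^\bullet$ and $A^*\circ j_Y$ are bounded linear maps $Y^\bullet\to X^*$, and since $\{d_n(Y^\bullet)\}_n$ is dense in $Y^\bullet$, it suffices to show that $j_X(A^\bullet(d_n(Y^\bullet)))=A^*(j_Y(d_n(Y^\bullet)))$ for every $n\in\N$. Both sides are elements of $X^*$, so by density of $\{d_m(X)\}_m$ in $X$ it further suffices to prove, for all $n,m\in\N$, that
\[\langle \hat A^\bullet(n),d_m(X)\rangle_X \;=\; \langle j_Y(d_n(Y^\bullet)),\,A(d_m(X))\rangle,\]
where I have used that $A^\bullet(d_n(Y^\bullet))=\hat A^\bullet(n)$ (by the identification of $\cL$-codes with operators) and $A(d_m(X))=\hat A(m)$.

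For the left-hand side I would substitute the explicit representative \eqref{EqFormulaHatABullet} of $\hat A^\bullet(n)$, apply the definition $\langle \lambda\gamma(X,x^*,\cdot),x\rangle_X=\lambda\alpha(X,x,x^*)$, and then use $\alpha(X,x,x^*)=\langle i_X(x^*),x\rangle$ together with the defining property \eqref{EqPropIsometryiX} of $i_X$. This pipeline collapses the left-hand side to $\|d_m(X)\|\cdot\bigl(x^*_n(X,Y,\hat A)\bigr)_m$. For the right-hand side I would simply unfold the definition of $x^*_n(X,Y,\hat A)$, whose $m$-th coordinate is $\langle d_n(Y^\bullet),\hat A(m)/\|d_m(X)\|\rangle_Y$; multiplying by $\|d_m(X)\|$ and using bilinearity of $\langle\cdot,\cdot\rangle_Y$ gives precisely $\langle j_Y(d_n(Y^\bullet)),A(d_m(X))\rangle$. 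The cases where $d_m(X)=0$ or $x^*_n(X,Y,\hat A)=0$ are handled trivially by the conventions adopted in the definitions.

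I do not expect any genuine obstacle here: the statement is essentially a bookkeeping check that the \emph{ad hoc} formula for $\hat A^\bullet(n)$ really does encode the adjoint. The only care needed is to track the normalizations $\|d_m(X)\|$ and $\|x^*_n(X,Y,\hat A)\|$ consistently through the identities relating $\gamma$, $\alpha$, $i_X$, and $\langle\cdot,\cdot\rangle_X$; once this is done, the chain of equalities closes automatically, proving the diagram commutes and thus that $A^\bullet\equiv A^*$.
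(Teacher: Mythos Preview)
Your proposal is correct and follows essentially the same approach as the paper's proof: both define the isometries $j_X,j_Y$ via $g\mapsto\langle g,\cdot\rangle$, reduce commutativity of the square to the dense vectors $d_n(Y^\bullet)$ and $d_m(X)$, and then unwind the definitions of $\hat A^\bullet(n)$, $\gamma$, $\alpha$, $i_X$ and $x^*_n$ to close the chain of equalities. The only cosmetic difference is that the paper branches on $d_n(Y^\bullet)=0$ rather than on $x^*_n(X,Y,\hat A)=0$, which amounts to the same trivial case.
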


\begin{proof} For each $Z\in \{X,Y\}$, define $j_Z:Z^\bullet\to Z^*$ by letting $j_Z(g)=\langle g,\cdot\rangle_Z\in Z^*$ for all $g\in Z^\bullet$. So, both $j_X$ and $j_Y$ are linear isometries.  Let us show that  $A^*\circ j_Y=j_X\circ A^\bullet$.

Fix $n\in\N$.  If $d_n(Y^\bullet)=0$, then clearly $A^*\circ j_Y(d_n(Y^\bullet))=j_X\circ A^\bullet(d_n(Y^\bullet))$. Assume $d_n(Y^\bullet)\neq 0$. Given $m\in \N$, we have that 
\[A^*\circ j_Y(d_n(Y^\bullet))(d_m(X)) =j_Y(d_n(Y^\bullet))(A(d_m(X)))
=\langle d_n(Y^\bullet),\hat A(m)\rangle_Y.
\]
By the definition of $x^*_n(X,Y,\hat A)$ and the property of $i_X$ given in \eqref{EqPropIsometryiX}, it follows that
\[\langle i_X(x^*_n(X,Y,\hat A)), d_m(X)\rangle=\langle d_n(Y^\bullet),\hat A(m)\rangle_Y.\]
By the definition of $\alpha$ and $\gamma$, we conclude that  
\begin{align*}
\langle i_X(x^*_n(X,Y,& \hat A)), d_m(X)\rangle
\\ 
&=\|x^*_n(X,Y,\hat A)\|\cdot \alpha\Big(X,d_m(X),\frac{x^*_n(X,Y,\hat A)}{\|x^*_n(X,Y,\hat A)\|}\Big)\\
&=j_X\Big(\|x^*_n(X,Y,\hat A)\|\cdot \gamma\Big(X,\frac{x^*_n(X,Y,\hat A)}{\|x^*_n(X,Y,\hat A)\|},\cdot\Big)\Big)(d_m(X))\\
&=\Big(j_X\circ A^\bullet(d_n(Y^\bullet))\Big)(d_m(X)).
\end{align*}
Since $(d_m(X))_m$ is dense in $X$, it follows that \[A^*\circ j_Y(d_n(Y^\bullet))=j_X\circ A^\bullet(d_n(Y^\bullet))\] for all $n\in\N$, and we conclude that $A^*\circ j_Y=j_X\circ A^\bullet$.
\end{proof}

This concludes the proof of the first part of the theorem. We now move to its second statement.

\begin{claim}\label{ClaimCompNormOp}
The assignment  $(X,Y,\hat A)\in \cL\mapsto \|A\|\in \R$ in Borel. 
\end{claim}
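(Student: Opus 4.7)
The plan is to express the operator norm as a countable supremum of Borel functions, exploiting the fact that the coding built into $\cL$ already stores the action of $A$ on a dense sequence. Specifically, since $(d_n(X))_n$ is dense in $X$ for every $X\in \SB$, I will use the formula
\[
\|A\| \;=\; \sup\Big\{\frac{\|\hat A(n)\|}{\|d_n(X)\|} \,:\, n\in\N,\ d_n(X)\neq 0\Big\},
\]
which holds because $\hat A(n)=A(d_n(X))$ and density together with continuity of $A$ let us compute the operator norm on a dense set of nonzero vectors.

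To implement this, I would fix $n\in\N$ and define $f_n:\cL\to\R$ by $f_n(X,Y,\hat A)=\|\hat A(n)\|/\|d_n(X)\|$ when $d_n(X)\neq 0$, and $f_n(X,Y,\hat A)=0$ otherwise. The map $X\in\SB\mapsto d_n(X)\in C(\Delta)$ is Borel by hypothesis on the selector sequence, the projection $(X,Y,\hat A)\in\cL\mapsto \hat A(n)\in C(\Delta)$ is continuous, and the norm on $C(\Delta)$ is continuous. Consequently $\{(X,Y,\hat A)\in\cL : d_n(X)\neq 0\}$ is a Borel subset of $\cL$ on which the ratio is a Borel function of Borel ingredients, while $f_n$ vanishes on the Borel complement; so each $f_n$ is Borel on all of $\cL$. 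The claim then follows from the fact that the pointwise supremum of a countable family of Borel real-valued functions is Borel, since $\sup_n f_n=\|A\|$ by the displayed formula.

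I do not expect any real obstacle here; the only tiny point of care is the case $d_n(X)=0$, in which $\hat A(n)=0$ as well (since $\hat A$ codes a linear map) and the quotient is formally $0/0$. Handling this by declaring $f_n=0$ on that Borel set is harmless: those indices contribute nothing to the operator norm, and the values of $f_n$ on the remaining indices are more than enough to recover $\|A\|$ by density.
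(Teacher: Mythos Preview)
Your proof is correct and follows essentially the same approach as the paper: both express $\|A\|$ as the countable supremum of the Borel functions $f_n(X,Y,\hat A)=\|\hat A(n)\|/\|d_n(X)\|$ (with the convention that the quotient is $0$ when $d_n(X)=0$). Your write-up is in fact slightly more careful than the paper's, which contains a minor typo (writing $\hat A(n)/\|d_n(X)\|\in\R$ rather than $\|\hat A(n)\|/\|d_n(X)\|\in\R$), but the argument is the same.
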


\begin{proof}
This follows since for each $n\in\N$ the assignment 
\[(X,Y,\hat A)\in \cL\mapsto \frac{\hat A(n)}{\|d_n(X)\|}\in \R,\] where $\hat A(n)/\|d_n(X)\|$ is taken to be $0$ if $d_n(X)=0$, is Borel. Hence, since  $\|A\|=\sup_n\hat A(n)/\|d_n(X)\|$,  it follows that $(X,Y,\hat A)\in \cL\mapsto \|A\|\in \R$ in Borel.
\end{proof}

\begin{claim}\label{ClaimMapABulletgBorel}
Let 
\[\cD=\{(X,Y,\hat A,g)\in \cB\times C(\Delta):  g\in Y^\bullet\}.\]
Then $\cD$ is Borel and the map  
\[(X,Y,\hat A,g)\in \cD \mapsto A^\bullet(g)\in C(\Delta)\]
is Borel.
\end{claim}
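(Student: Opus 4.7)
The plan is to first show $\cD$ is Borel, then to exhibit a Borel formula for $A^\bullet(g)$ that extends \eqref{EqFormulaHatABullet} from the generators $d_n(Y^\bullet)$ to an arbitrary $g \in Y^\bullet$, and finally to verify this formula agrees with $A^\bullet(g)$ via Claim \ref{ClaimABulletIsoA*}.

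For the Borelness of $\cD$, recall that the assignment $(X,Y,\hat A) \in \cB \mapsto Y^\bullet \in \SB$ is Borel, as already noted in this proof. Combining this with the standard fact that the membership relation $\{(Z,g) \in \SB \times C(\Delta) : g \in Z\}$ is Borel---expressible as $\bigcap_k \bigcup_n \{(Z,g) : \|d_n(Z) - g\| < 1/k\}$---one concludes that $\cD$ is Borel.

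For the map itself, I will mirror the construction of $\hat A^\bullet(n)$ with $d_n(Y^\bullet)$ replaced by a generic $g \in Y^\bullet$. Define
\[x^*_g = \Big(\Big\langle g, \tfrac{\hat A(m)}{\|d_m(X)\|}\Big\rangle_Y\Big)_{m \in \N} \in \R^\N,\]
with the $m$-th coordinate taken to be $0$ if $d_m(X) = 0$. Each coordinate, and hence $\|x^*_g\|_\infty$, is Borel in $(X,Y,\hat A, g) \in \cD$ by the Borelness of $\langle\cdot,\cdot\rangle_Y$ on $\cA$, of $\hat A(m)$, and of $d_m$. I then set
\[F(X,Y,\hat A, g)(y) := \|x^*_g\|_\infty \cdot \gamma\!\Big(X,\,\tfrac{x^*_g}{\|x^*_g\|_\infty},\,y\Big),\]
with the convention $F \equiv 0$ when $x^*_g = 0$. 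Since $\gamma$ is Borel on $\E$, the real-valued map $(X,Y,\hat A,g,y) \mapsto F(X,Y,\hat A,g)(y)$ is Borel; because $F$ takes values in $C(\Delta)$ with the sup-norm topology, evaluation at a countable dense subset of $\Delta$ upgrades this to Borelness of the map into $C(\Delta)$.

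Finally, I will identify $F$ with $A^\bullet(g)$. By Claim \ref{ClaimABulletIsoA*}, $A^\bullet(g) = j_X^{-1}(j_Y(g) \circ A)$. The functional $j_Y(g) \circ A \in X^*$ acts on $d_m(X)$ by $\langle g, \hat A(m)\rangle_Y$, so by \eqref{EqPropIsometryiX} its normalized coding is $x^*_g/\|x^*_g\|_\infty \in K_{X^*}$, with norm $\|x^*_g\|_\infty$ equal to $\|j_Y(g) \circ A\|_{X^*}$ (using density of the normalized vectors $d_m(X)/\|d_m(X)\|$ in $S_X$, which in turn follows from density of $(d_m(X))_m$ in $X$). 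The image of this functional in $X^\bullet$ under $j_X^{-1}$ is then exactly the right-hand side defining $F$, so $F = A^\bullet(g)$. This is essentially the computation of Claim \ref{ClaimABulletIsoA*} carried out for a generic $g \in Y^\bullet$ instead of $g = d_n(Y^\bullet)$; the main (mild) obstacle is handling the degenerate cases $x^*_g = 0$ and $d_m(X) = 0$, but these are resolved by the same conventions used to define $\hat A^\bullet(n)$.
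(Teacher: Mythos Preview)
Your proof is correct, but it takes a genuinely different route from the paper's. The paper argues by approximation: since $A^\bullet$ is a bounded operator with $\|A^\bullet\|=\|A\|$, one has $\|A^\bullet(g)-\hat A^\bullet(n)\|\le \|A\|\cdot\|g-d_n(Y^\bullet)\|$, and so $\|A^\bullet(g)-h\|\le\eps$ can be rewritten as a Borel condition quantifying over $n$ and rational $\delta$ (this uses the separate Claim that $(X,Y,\hat A)\mapsto\|A\|$ is Borel). You instead extend the defining formula \eqref{EqFormulaHatABullet} verbatim from $d_n(Y^\bullet)$ to an arbitrary $g\in Y^\bullet$, obtaining a closed-form Borel expression for $A^\bullet(g)$ and then verifying via the isometry $j_X\circ A^\bullet = A^*\circ j_Y$ that it really computes $A^\bullet(g)$.

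Your approach is more direct and avoids the auxiliary Claim about $\|A\|$; it exploits the specific structure of $A^\bullet$ (built from $\gamma$ and the pairing $\langle\cdot,\cdot\rangle_Y$) rather than just its continuity. The paper's argument is more modular: it would show Borelness of $(X,Y,\hat A,g)\mapsto B(g)$ for \emph{any} family of operators $B$ whose values on the $d_n(Y^\bullet)$ and whose norms depend Borel-measurably on the data, without reference to how $B$ was constructed.
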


\begin{proof}
Recall the definition of the operator $A^\bullet:Y^\bullet\to X^\bullet$ coded by $\hat A^\bullet$. Given $g\in Y^\bullet$ and a sequence $(n_i)_i$ of natural numbers such that $g=\lim_id_{n_i}(Y^\bullet)$, we have $A^\bullet(g)=\lim_i\hat A^\bullet(n_i)$. Moreover, by the definition of $\cL$ and since $\|A^\bullet\|=\|A\|$, we have that
\[\|A^\bullet (g)-\hat A^\bullet(n)\|\leq \|A\|\cdot\|g-d_n(Y^\bullet)\|\]
for all $n\in\N$. 

Let $h\in C(\Delta)$ and $\eps>0$. By the above, it follows that 
\begin{align}\label{EqIFF}
\|A^\bullet (g)-h\|\leq \eps\Leftrightarrow\ &\forall  \delta\in (0,\eps)\cap \Q,\ \exists n\in\N \text{ s.t.}\\
&\Big(\|g-d_n(Y^\bullet)\|\leq \frac{\delta}{\|A\|}\Big)\wedge\Big(\|\hat A^\bullet(n)-h\|\leq\eps+\delta\Big).\nonumber
\end{align}
By Claim \ref{ClaimCompNormOp}, the right hand side of \eqref{EqIFF} gives us a Borel condition on $(X,Y,\hat A,g)\in \cD$. So, the claim follows.
\end{proof}

Let 
\[\cC=\{(X,Y,\hat A,g,x)\in \cB\times C(\Delta)\times C(\Delta):  g\in Y^\bullet, x\in X\}.\]
Since $Y\in \cB\mapsto Y^\bullet$ is Borel, $\cC$ is Borel. Given $(X,Y,\hat A,g,x)\in \cC$, define
\[[g,x]_{(X,Y,\hat A)}=\langle A^\bullet (g),x\rangle_{X}.\]
By Claim \ref{ClaimMapABulletgBorel}, $[\cdot,\cdot]_{(\cdot,\cdot,\cdot)}:\cC\to \R$ is Borel. 

Fix $(X,Y,\hat A)\in \cB$. It is  clear that  $[\cdot,\cdot]_{(X,Y,\hat A)}$ is bilinear and norm continuous. At last, since $[d_n(Y^\bullet),d_m(X)]_{(X,Y,\hat A)}=\langle A^\bullet (d_n(Y^\bullet)),d_m(X)\rangle_{X}$,  Claim \ref{ClaimABulletIsoA*} implies that 
\[A^*\circ j_Y(d_n(Y^\bullet))(d_m(X))=[d_n(Y^\bullet),d_m(X)]_{(X,Y,\hat A)}\]
for all $n,m\in\N$. So, $(j_Y,\text{Id}_{X^*})$ is an isometry between $g\in Y^\bullet\mapsto[g,\cdot]_{(X,Y,\hat A)}\in X^*$ and  $A^*:Y^*\to X^*$. This finishes the proof of the theorem.
\end{proof}

We conclude this section with the proofs of Corollaries \ref{generalize Dodos} and \ref{CorOpBetweenReflex}.
But before that let us point out two straightforward facts about any collection $\cB\subset \cL$. Precisely, given such $\cB$, we have that 
  \begin{enumerate}
      \item if $\cB$ is analytic, so is $\langle\cB\rangle$ \cite[Lemma 5.17]{BeanlandCausey2017}, and
      \item if $\cB\subset \cL_{\SD}$, then $\cB^*$ is  closed under isomorphism, i.e., $\cB^*=\langle \cB^*\rangle$.
  \end{enumerate}

\begin{proof}[Proof of Corollary \ref{generalize Dodos}]
 Let  $\mathcal{A} \subset \cL_{\SD}$ be analytic. As $\cL_{\SD}$ is coanalytic, Lusin's separation theorem implies there is a Borel subset $\mathcal{B}$ of $\cL_{\SD}$ with $\mathcal{A} \subset \mathcal{B}$. Theorem \ref{dual ops} yields a Borel assignment, call it $D:\cB \to \mathcal{L}$, with $D(X,Y,  A)=(Y^\bullet,X^\bullet, A^\bullet)$, such that $A^\bullet$ is isometric to $A^*$ for each $A\in \cB$. As analyticity if preserved under Borel maps, $D(\cA)$ is analytic. Likewise, the isomorphic saturation of an analytic set is analytic   \cite[Lemma 5.17]{BeanlandCausey2017} and thus the following set is analytic:
$$ \{ B \in \mathcal{L} : \exists C \in D(\cA) \mbox{ with $B$ isomorphic  to $C$}\}.$$
Since the above set is equal to the  $\mathcal{A}^*$ in the statement of the  corollary, this concludes the proof. \end{proof}

\begin{proof}[Proof of Corollary \ref{CorOpBetweenReflex}]
For the forward direction, suppose $\langle\cB\rangle$ is Borel. Let us first notice that $\cB^*$ is analytic. For that, consider the Borel assignment  $(X,Y,\hat A)\in \langle\cB\rangle\mapsto (Y^\bullet,X^\bullet, \hat A)\in \cL$ given by Theorem \ref{ThmLSDmap}. Then
\[\cB^*=\big\langle \{(Y^\bullet,X^\bullet,\hat A^\bullet)\in \cL\colon  (X,Y,\hat A)\in \langle\cB\rangle\}\big\rangle.\]
Since the assignment above is Borel, $\{(Y^\bullet,X^\bullet,\hat A^\bullet)\in \cL\colon (X,Y,\hat A)\in \langle\cB\rangle\}$ is analytic, which implies that  $\cB^*$ is  analytic. 

Since $\langle\cB\rangle$ is a collection of operators between reflexive spaces, we have that $\cB^*\subset \cL_{\SD}$. Since $\cL_{\SD}$ is coanalytic,   Lusin's separation theorem allow us to pick a Borel $\cC\subset \cL_{\SD}$ so that $\cB^*\subset \cC$. For now on, let $\Phi:(X,Y,\hat A)\in \cC\mapsto (Y^\bullet,X^\bullet, \hat A)\in \cL$ be the Borel assignment given by Theorem \ref{ThmLSDmap}. Since $\langle\cB\rangle$ is a collection of operators between reflexive spaces it follows that   $\cB^*=\Phi^{-1}(\langle\cB\rangle)$. Hence, $\cB^*$ is Borel since $\langle\cB\rangle$ is Borel.

For the backwards direction, suppose now that $\cB^*$ is Borel. By the definition of $\cB^*$, we have that $\cB^*=\langle\cB^*\rangle$ and $(\cB^*)^*=\langle\cB\rangle$. Therefore, the result follows from the forward direction which we just proved.
\end{proof}

\section{Factoring weakly compact operators through a single space}\label{SectionMainTheorem}

In this section, we prove   Theorem \ref{ThmMainFactResult}. For the proof that $\cW_{\SB,\mathrm{ub}}$ and $\cW_{\SB,L_1}$ are strongly bounded with respect to the class of separable reflexive spaces admitting Schauder bases (Theorem \ref{ThmBeanlandFreeman}), we will not need any of the results in the previous sections. On the other hand, the same result for the collections   $\cW_{\mathrm{sb}^*,\SD}$ and   $\cW_{\mathrm{ub}^*,\SD}$ will make heavy use of the machinery in Section \ref{SectionDUAL} (Theorem \ref{ThmApliFactDUALS}). Since both those proofs make use of  the DFJP interpolation scheme, we start this section by recalling it (see \cite{DavisFiegelJohnsonPelczynski1974} for more details). 

Let $X$ be a Banach space and $W\subset X$ be a convex, symmetric and bounded subset. For each $n\in\N$, define a norm $|\cdot |_n$ on $X$ by letting 
\[|x|_n=\inf\Big\{\lambda>0: \frac{x}{\lambda}\in 2^nW+2^{-n}B_X\Big\}\]
for all $x\in X$ -- this is the \emph{Minkowski gauge norm on $X$ associated to $W$}. Given $p\in (1,\infty)$, define \[\Delta_p(X,W)=\Big\{x\in X:  \sum_n\|x\|^p_n<\infty\Big\}\] and let $\|\cdot \|_p$ be given by
$\|x\|_p=(\sum_n|x|_n)^{1/p}$ for all $x\in \Delta_p(X,W)$. This defines  a complete norm on $\Delta_p(X,W)$, and the space $(\Delta_p(X,W),\|\cdot\|_p)$ is called the \emph{$p$-interpolation space of the pair $(X,W)$}.

We will need  the following lemma.\footnote{The proof of Lemma \ref{PropJohnson} was given to us by W.B. Johnson  and we reproduce it here   with his permission (see \cite{MathOverFlow-Beanland}).}

\begin{lemma}\label{PropJohnson}
Let $X$ and $Y$ be Banach spaces, and $A:X \to Y$ be a weakly compact operator. Suppose $Y$ has  either an unconditional basis or is isomorphic to $L_1$, and let  $(P_n)_n$ denote either the sequence of partial sum projections associated to the unconditional basis of $Y$ or to the Haar basis of $Y$, respectively. Then
\[W=\overline{\mathrm{conv}\Big(\bigcup_{n \in \mathbb{N}} P_n(A(B_X))\Big)},\]
i.e., the closed convex hull of the union over $n\in\N$ of $P_n(A(B_X))$, is  weakly compact. 
\end{lemma}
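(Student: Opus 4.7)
The plan is to invoke Krein's theorem (the closed convex hull of a relatively weakly compact subset of a Banach space is weakly compact), thereby reducing the problem to showing that $S := \bigcup_{n\in\N}P_n(A(B_X))$ is relatively weakly compact. Setting $K := A(B_X)$, the weak compactness of $A$ gives that $K$ is relatively weakly compact, and the task becomes transferring this property from $K$ to $S$ under each of the two structural assumptions on $Y$.

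Under the unconditional basis hypothesis, I would first replace the norm of $Y$ by the equivalent lattice norm
\[\Bigl\|\sum_j a_j e_j\Bigr\|_u := \sup_{|\varepsilon_j|\le 1}\Bigl\|\sum_j \varepsilon_j a_j e_j\Bigr\|,\]
turning $Y$ into a Banach lattice in which $(e_j)$ is an atomic positive basis and $|\sum_j a_j e_j| = \sum_j |a_j| e_j$. Because basis expansions converge in norm, every order interval $[0,z]$ is norm compact (one estimates $\|y-P_N y\|_u \le \|z-P_N z\|_u$ uniformly for $y\in[0,z]$ and then appeals to finite-dimensionality of $P_N([0,z])$), so $(Y,\|\cdot\|_u)$ has order continuous norm. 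The partial sum projections satisfy $|P_n y| = P_n|y| \le |y|$ in the lattice order, so $S$ lies inside the solid hull $\mathrm{sol}(K) = \{z\in Y : |z|\le |y| \text{ for some } y\in K\}$. A standard result in Banach lattice theory (see, e.g., Meyer--Nieberg, \emph{Banach Lattices}) says that in an order continuous Banach lattice the solid hull of a relatively weakly compact set is relatively weakly compact, and this case is complete.

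When $Y = L_1$ and the Haar basis is enumerated in its natural (martingale) order, the projection $P_n$ is the conditional expectation $E[\,\cdot\mid\cF_n]$ with respect to $\cF_n := \sigma(h_1,\dots,h_n)$. By the Dunford--Pettis theorem $K$ is uniformly integrable, and de la Vall\'ee Poussin supplies a convex increasing $\Phi$ with $\Phi(t)/t\to\infty$ and $M := \sup_{f\in K}\int\Phi(|f|) < \infty$. Jensen's inequality then yields $\int\Phi(|P_n f|)\le \int E[\Phi(|f|)\mid\cF_n] = \int\Phi(|f|)\le M$ for every $f\in K$ and every $n$, so $S$ satisfies the same de la Vall\'ee Poussin criterion, is uniformly integrable, and is therefore (again by Dunford--Pettis) relatively weakly compact.

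The main obstacle I anticipate is in the unconditional basis case: the appeal to the solid hull theorem packages a non-trivial piece of Banach lattice theory. A self-contained alternative would be an Eberlein--\v{S}mulian argument on sequences $(P_{n_m}(y_m))$ with $y_m\in K$ and $y_m\to y$ weakly, writing $P_{n_m}(y_m) = P_{n_m}(y) + P_{n_m}(y_m - y)$, noting that the first term tends to $y$ in norm, and then handling the weakly null residual $P_{n_m}(y_m - y)$ via a gliding-hump/block-basis reduction -- naturally splitting into subcases according to whether the unconditional basis is boundedly complete or $Y$ contains $c_0$.
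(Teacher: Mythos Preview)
Your proof is correct. For the unconditional basis case, the Banach lattice argument you outline (renorm, order continuity, solid hull of a relatively weakly compact set) is precisely the content of the Abramovich result the paper simply cites, so the two treatments coincide in substance. The genuine difference is in the $L_1$ case: both arguments go through uniform integrability, but via different characterizations. You invoke the de la Vall\'ee Poussin criterion and push the convex $\Phi$ through the conditional expectations $P_n=E[\,\cdot\mid\cF_n]$ with Jensen's inequality. The paper instead uses the equivalent formulation that a bounded $W'\subset L_1$ is relatively weakly compact if and only if for every $\varepsilon>0$ there is $M>0$ with $W'\subset M B_{L_2}+\varepsilon B_{L_1}$, and then simply notes that $B_{L_2}$ and $B_{L_1}$ are invariant under the Haar projections, so $P_n(A(B_X))\subset M B_{L_2}+\varepsilon B_{L_1}$ for all $n$ and the same inclusion passes to the closed convex hull. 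The paper's route is marginally shorter and avoids Krein's theorem altogether (the $MB_{L_2}+\varepsilon B_{L_1}$ inclusion is already convex and closed); yours is more self-contained and makes the martingale structure explicit. Your closing Eberlein--\v{S}mulian sketch is unnecessary given that the solid hull argument already settles the unconditional case cleanly.
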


\begin{proof}
If $Y$ has an unconditional basis the result follows straightforwardly from   \cite{Abramovic1972} (cf. \cite[Theorem 3.3]{FiJoTz-JAT}). Assume   $Y$ is  isomorphic to $L_1$, without loss of generality, we can assume that $Y=L_1$.   To see that $W$ is weakly compact we use the uniform integrability condition (see \cite[Definition 5.2.2]{AlbK-book}). Recall, a bounded subset $W'\subset L_1$ is weakly compact if and only if for all $\eps>0$ there exists $M>0$ so that $W'\subset MB_{L_2}+\eps B_{L_1}$ -- this standard fact can be deduced  from \cite[Lemma 5.2.6 and Theorem 5.2.9]{AlbK-book}. Since the latter holds for $W$, we are done. Indeed, since $A$ is weakly compact, given any $\eps>0$ there exists $M>0$ so that $A(B_X)\subset MB_{L_2}+\eps B_{L_1}$.  Since  $B_{L_2}$ and $B_{L_1}$ are invariant under the Haar basis projections, we have that $P_n(A(B_X))\subset MB_{L_2}+\eps B_{L_1}$ for all $n\in\N$, which implies   $W\subset MB_{L_2}+\eps B_{L_1}$.   \end{proof}

We now describe a construction  which will be crucial for Theorem \ref{ThmBeanlandFreeman}. Since this construction will also be used for the purposes of    Section \ref{SectionThmBorelFact}, we define the objects below in considerable more generality than we will need for Theorem \ref{ThmBeanlandFreeman}. 

Given a Polish space $S$, let $\cF(S)$ denote the space of all closed subsets of $S$ endowed with the Effros-Borel structure, i.e., the $\sigma$-algebra generated by 
\[\Big\{\{F\in \cF(S): U\cap F\neq \emptyset\}: U\subset S\text{ is open}\Big\}.\]
Given $E\in \SB$, write \[\cF_0(E)=\{F\in \cF(E): F\text{ is bounded, convex  and symmetric}\}.\]  

\begin{construction}\label{Construction} Consider  $E\in \SB$,  $y_1,y_2\in E$, and a   map $Y\in \SB(E)\mapsto (e_n^{Y})_n\in E^ \N $ so that $(e_n^{Y})_n$ is a Schauder Basis for $E$ for all $Y\in \SB(E)$. For each $Y\in \SB(E)$, let $ (P^{Y}_n)_n$ be the sequence of partial sum projections associated to $(e^{Y}_n)_n$.
 
For each $(Y,W)\in \SB(E)\times \cF_0(E)$, let
\begin{enumerate}
    \item $E_{W}=\overline{\Span}(W\cup\{y_1,y_2\})$,
    \item $W^m_{{(Y,W)}}=P^Y_m(E_{W})$,
    \item $\|\cdot\|_{{(Y,W)},m}$ be the Minkowski gauge norm on $E$ associated to $W^m_{{(Y,W)}}$,
    \item $W_{{(Y,W)}}=\overline{\mathrm{conv}(\bigcup_{m\in\N}W^m_{{(Y,W)}})}$, 
    \item $Z_{{(Y,W)}}=\Delta_2(E,W_{{(Y,W)}})$, and
    \item  $\|\cdot\|_{{(Y,W)}}$ be the norm of $\Delta_2(E,W_{{(Y,W)}})$.
\end{enumerate}
 By \cite{DavisFiegelJohnsonPelczynski1974} (or \cite[Theorem 9]{BeanlandFreeman2014}), for all $(Y,W)\in \SB(E)\times \cF_0(E)$ there are bounded operators  $B_{{(Y,W)}}:X\to Z_{{(Y,W)}}$ and $C_{{(Y,W)}}:Z_{{(Y,W)}}\to Y$ so that $A=C\circ B$ and $C_{{(Y,W)}}$ is injective. Moreover,  if the auxiliary vectors $y_1,y_2\in C(\Delta)$ are chosen so that     $e^{(Y,W)}_n\in \mathrm{span}(W_{{(Y,W)}})$ for all $n\in\N$ and all ${(Y,W)}\in \SB(E)\times \cF_0(E)$, then $e^{(Y,W)}_n\in \mathrm{Im}(C_{{(Y,W)}})$. In this case, we define
 \begin{enumerate}\setcounter{enumi}{6}
 \item $z^{{(Y,W)}}_n=C^{-1}_{{(Y,W)}}(e^{(Y,W)}_n)$ for each $n\in\N$ and ${(Y,W)}\in \SB(E)\times \cF_0(E)$.
 \end{enumerate}
Then $(z^{{(Y,W)}}_n)_n$ is a basis for $Z_{{(Y,W)}}$ for all ${(Y,W)}\in \SB(E)\times \cF_0(E)$ (see \cite{DavisFiegelJohnsonPelczynski1974} or  \cite[Theorem 9]{BeanlandFreeman2014}).
\end{construction}

The next Lemma is the version of \cite[Proposition 14]{BeanlandFreeman2014} which we need for our goals. 

\begin{lemma}\label{LemmaProposition14}
Let $E\in \SB$ be either a space with an unconditional basis or isomorphic to $L_1$, and let  $\cB\subset \cL$ be a Borel collection of weakly compact operators  all of which have $E$ as their codomain. Then there exists  reflexive $Z\in \SB$ with  a Schauder basis so that every operator in $\cB$ factors through $Z$.
\end{lemma}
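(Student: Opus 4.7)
The plan is to combine an individualized application of the Davis--Figiel--Johnson--Pe{\l}czy{\'n}ski interpolation scheme from Construction~\ref{Construction} with a complementable universality result for the class of reflexive Banach spaces admitting a Schauder basis, so as to absorb all DFJP spaces attached to the operators of $\cB$ into a single reflexive space.

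First, for each $A\colon X\to E$ in $\cB$, Lemma~\ref{PropJohnson} ensures that
\[
W_A\;=\;\overline{\mathrm{conv}\Big(\bigcup_{n\in\N} P_n(A(B_X))\Big)}
\]
is a weakly compact, convex, symmetric, bounded subset of $E$, where $(P_n)$ are the partial-sum projections of either a fixed unconditional basis of $E$ or of the Haar basis of $L_1$. Feeding $W_A$ into Construction~\ref{Construction} (with $Y=E$ throughout, and with auxiliary vectors $y_1,y_2$ chosen once and for all so that every basis vector of $E$ lies in $\Span(W_A\cup\{y_1,y_2\})$) produces a reflexive space $Z_A\in\SB$ with a Schauder basis, together with bounded operators $B_A\colon X\to Z_A$ and $C_A\colon Z_A\to E$ satisfying $A=C_A\circ B_A$. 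By unwinding the definitions of $W_A$, of the Minkowski gauges $\|\cdot\|_{(E,W_A),m}$, and of the interpolation norm on $\Delta_2(E,W_A)$, the assignment $A\in\cB\mapsto (Z_A,B_A,C_A)$ is Borel, so in particular $\{Z_A\colon A\in\cB\}$ is an analytic subset of $\SB$ consisting of reflexive spaces with a Schauder basis.

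Second, I would invoke the complementable strong boundedness of this class --- the same universal-envelope phenomenon underlying Theorem~\ref{BF-main} --- to obtain a single reflexive $Z\in\SB$ with a Schauder basis into which every $Z_A$ embeds as a \emph{complemented} subspace, via inclusions $i_A\colon Z_A\hookrightarrow Z$ with associated projections $\pi_A\colon Z\to Z_A$. Setting $\widetilde{B}_A=i_A\circ B_A$ and $\widetilde{C}_A=C_A\circ\pi_A$ yields the desired factorization $A=\widetilde{C}_A\circ\widetilde{B}_A$ through $Z$. The central obstacle is that we genuinely need complementation rather than mere subspace embedding: since $E$ need not be injective (being $L_1$ or carrying only an unconditional basis), a bare embedding $Z_A\hookrightarrow Z$ would not allow one to extend $C_A$ to $Z$. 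Once this complemented envelope is in hand, the remaining work --- Borel measurability of the parametrized DFJP scheme and the uniform behavior of the bases $(z_n^A)$ under the envelope construction --- is the technical but conceptually routine core of the argument.
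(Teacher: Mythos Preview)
Your proposal is correct and tracks the paper's approach closely. The paper's own proof is in fact terser than yours: after setting up Construction~\ref{Construction} with the fixed basis of $E$ and the weakly compact sets $W_A$ (exactly as you do, via Lemma~\ref{PropJohnson}), it simply observes that the constant assignment $Y\mapsto (e_n)_n$ is Borel and then defers the remaining work to \cite[Proposition~14]{BeanlandFreeman2014}. Your second step --- obtaining a single reflexive space with basis containing every $Z_A$ complementably --- is precisely what that deferred argument accomplishes. One small technical remark: rather than coding $Z_A$ as an element of $\SB$, the argument in \cite{BeanlandFreeman2014} works directly with the Borel family of basic sequences $(z_n^A)_n$ in $C(\Delta)^\N$ and applies the amalgamation/Schauder-tree machinery to that family; this sidesteps the need to isometrically realize each $Z_A$ inside $C(\Delta)$ in a Borel way, but the two formulations are equivalent.
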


\begin{proof}
This proof consists of noticing that  the methods in \cite{BeanlandFreeman2014} (mainly Proposition 14) together with  Lemma \ref{PropJohnson} give us the desired result. Precisely,   we only need to use the notation defined before the lemma so that we can use results of \cite{BeanlandFreeman2014}.

Let $(e_n)_n$ be  either the unconditional basis of $E$ or the Haar basis of $E$.  Consider the objects defined in Construction \ref{Construction} associated to $E$,  the constant assignment $(Z,W)\in \SB(E)\times \cF_0(E)\mapsto (e_n)_n\in E^\N$, and  some $y_1,y_2\in Y$ with $y_1=y_0$ and $y_2=-y_0$, where $y_0=\sum_na_ne_n\in E $ is so that $a_n\neq 0$ for all $n\in\N$. Then $y_1,y_2$ satisfy the special  condition in Construction \ref{Construction}. For each $A=(X,Y,\hat A)\in \cB$, define 
$W_A=W_{(Y,\overline{A(B_X)})}$ and $Z_A=Z_{(Y,\overline{A(B_X)})}$. By  Lemma \ref{PropJohnson},  $W_{A}$ is weakly compact for all $A\in \cB$, so $Z_{A}$ is reflexive for every such $A$. 

Since the map  $Z\in \SB(E) \mapsto (e_n)_n\in E^\N$ is Borel, the result now follows analogously to the proof of \cite[Proposition 14]{BeanlandFreeman2014}.
\end{proof}

We can now present the proof  of half of Theorem \ref{ThmMainFactResult}. As mentioned at the beginning of this section, this half only makes use of results within this current section.

\begin{theorem}\label{ThmBeanlandFreeman}
Both $\mathcal{W}_{\SB,\mathrm{ub}}$ and  $\mathcal{W}_{\SB,L_1}$ are complementably strongly bounded with respect to the class of separable reflexive spaces admitting Schauder bases. 
\end{theorem}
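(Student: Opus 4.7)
The strategy in both cases is to reduce to Lemma \ref{LemmaProposition14} by replacing the variable codomain with a single universal space $E$ and pre-composing each operator with a Borel-parametrized embedding of its codomain into $E$. For $\cW_{\SB,\mathrm{ub}}$, the appropriate $E$ is Pelczynski's universal unconditional basis space $U$: a space with an unconditional basis such that every separable Banach space with an unconditional basis is isomorphic to a complemented subspace of $U$. For $\cW_{\SB,L_1}$, we take $E$ to be a fixed isometric copy of $L_1$ in $C(\Delta)$; since every admissible codomain $Y \cong L_1$ is isomorphic to all of $E$, the ``projection'' step below becomes trivial.

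Given an analytic $\cA \subset \cW_{\SB,\mathrm{ub}}$, the plan is to form
\[
\cA' = \{(X,U,\hat B) \in \cL : \exists\,(X,Y,\hat A) \in \cA,\ \exists\,\iota\colon Y \to U \text{ iso.\ emb.\ with complemented range},\ \hat B = \iota \circ \hat A\}
\]
and to verify that $\cA'$ is analytic. The codes $(Y,U,\hat\iota) \in \cL$ of injective operators that are bounded below form a Borel set; the additional requirement that the range be complemented is analytic (existential over admissible projections $P \colon U \to U$ satisfying $P\iota = \iota$); the composition $\hat B(m) = \iota(\hat A(m))$ is Borel-computable from $(\hat\iota,\hat A)$ by approximating $\hat A(m) \in Y$ by the dense sequence $(d_n(Y))_n$ and invoking the Lipschitz bound built into the $\cL$-coding; and $\cA'$ is the projection of the resulting analytic relation onto the $(X,U,\hat B)$-coordinate. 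Since $\cA' \subset \cW \cap \{Y = U\}$, which is coanalytic, Lusin's separation theorem supplies a Borel $\cB$ with $\cA' \subset \cB \subset \cW \cap \{Y = U\}$. Applying Lemma \ref{LemmaProposition14} to $\cB$ with $E = U$ produces a reflexive $Z \in \SB$ with a Schauder basis such that every operator in $\cB$ factors through $Z$.

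To transfer the factorization back to $\cA$, fix $A \in \cA$ and pick $\iota \colon Y \to U$ together with a complementing projection $P \colon U \to \iota(Y)$; then $\iota A \in \cA' \subset \cB$, so $\iota A = C \circ B$ and $A = \iota^{-1} P (\iota A) = (\iota^{-1} P C) \circ B$ factors through $Z$. The argument for $\cW_{\SB,L_1}$ is a verbatim repetition, replacing $U$ by $L_1$ and the ``complemented iso embedding'' data by an isomorphism $\iota \colon Y \to L_1$, so that $P = \mathrm{Id}_{L_1}$ and $A = \iota^{-1} C \circ B$. The principal technical obstacle is the Borel verification underlying analyticity of $\cA'$, specifically the composition step $\hat B = \iota \circ \hat A$; this is the kind of bookkeeping already handled in related contexts in \cite{BeanlandFreeman2014, BeanlandCausey2017}, the key ingredients being the density of $(d_n(Y))_n$ in $Y$ guaranteed by Kuratowski--Ryll-Nardzewski selection and the uniform norm bound on $\iota$ present in the $\cL$-coding. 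Everything else is a routine amalgamation of Lemma \ref{LemmaProposition14} and Lusin separation.
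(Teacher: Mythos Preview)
Your proposal is correct and follows essentially the same route as the paper: reduce to a single universal codomain ($U$ for $\mathrm{ub}$, $L_1$ for the other case), show the resulting collection of operators into that fixed space is analytic, separate it from the complement of the weakly compact operators by Lusin, apply Lemma~\ref{LemmaProposition14}, and transfer the factorization back via a complementing projection.

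The only noteworthy difference is in how the ``lifted'' analytic family is defined. You build $\cA'$ by explicitly quantifying over codes of complemented isomorphic embeddings $\iota\colon Y\to U$ and forming the composed code $\hat B=\iota\circ\hat A$; this works but requires the Borel-computability of composition that you flag. The paper sidesteps that computation: its $\cA_U$ consists of all $(X,U,\hat A)\in\cL$ for which there exists $(Z,W,\hat B)\in\cA$ with $X\equiv Z$ and $\hat A\sim\hat B$ (equivalence of sequences). Since sequence equivalence is a Borel relation on $C(\Delta)^\N\times C(\Delta)^\N$ and any isomorphic embedding $I_Y\colon Y\to U$ sends $\hat A$ to an equivalent sequence, one gets $I_Y\circ A\in\cA_U$ for free, without ever parametrizing $\iota$. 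This is a slicker bookkeeping device but not a different idea; your version and the paper's are interchangeable.
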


 \begin{proof}
The proof of this theorem is a simple adaptation of \cite[Theorem 25]{BeanlandFreeman2014} with the extra ingredient of  Lemma \ref{LemmaProposition14} and Pe\l czy\'{n}ski universal space.  

We start proving the result for $  \cW_{\SB,\mathrm{ub}}$, so let $\cA\subset \cW_{\SB,\mathrm{ub}}$ be an analytic subset. Let $U$ be Pe\l czy\'{n}ski famous universal space for unconditional basis, i.e., $U$ has an unconditional basis and every Banach space with an unconditional basis embeds into $U$ complementably \cite[Corollary 1]{Pel-Studia}. Define
\[\cA_{U}=\{(X,U,\hat A)\in \cL\colon \exists (Z,W,\hat B)\in \cA, \ X\equiv Z, \ \hat A\sim \hat B\},\]
where   $\hat A\sim \hat B$ stands for  ``the sequence $\hat A$ is equivalent to the sequence $\hat B$'' (see \cite[Section B1]{Do-Book} for definition). Since equivalence of sequences is a Borel relation in $C(\Delta)\times C(\Delta)$ and being isometric is an analytic relation in $\SB\times \SB$, $\cA_U$ is analytic. Define $\cW_U=\cW\cap (\SB\times \{U\}\times C(\Delta)^\N)$. As $\cW$ is coanalytic, so is $\cW_U$. Hence, Lusin's separation theorem gives us a Borel $\cB_U\subset \cW_U$ so that $\cA_U\subset \cB_U$. 
 
 By   Lemma \ref{LemmaProposition14}, there exists a reflexive Banach space with a Schauder basis so that every $(X,U,\hat  A)\in \cB_U$ factors through $Z$. Let us observe that every operator in $\cA$ also factors through $Z$. Indeed, let $(X,Y,\hat A)\in \cA$, and let $I_Y:Y\to U$ and $P_Y:U\to \mathrm{Im}(I_Y)$ be an isomorphic embedding and a bounded projection on $\mathrm{Im}(I_Y)$, respectively. Since $I_Y\circ A:X\to U$ is weakly compact, $I_Y\circ A:X\to U$ belongs to $\cA_U\subset \cB_U$, so there are bounded operators $B:X\to Z$ and $C:Z\to U$ so that $I_Y\circ A=C\circ B$. Hence, $A$ factors through $Z$ since $A=(I_Y^{-1}\circ P_Y\circ C)\circ B$. 
 
The result for  $  \cW_{\SB,L_1}$ is analogous, but simpler since the operators already have isomorphic codomains.
 \end{proof}

Before providing the proof for the second half of Theorem \ref{ThmMainFactResult}, we need the following lemma, which is   a simple consequence of \cite[Lemma 1]{DavisFiegelJohnsonPelczynski1974}.

\begin{lemma}\label{LemmaDFJP}
Let $X$ and $Y$ be dual  spaces, $T: Y\to X$ be a weak$^*$-to-weak$^*$ continuous weakly compact operator, and $W\subset X$ be a weakly compact, symmetric, convex, and bounded subset so that $T(B_Y)\subset W$. Let $\Delta_2(X,W)$ be the $2$-interpolation space of the pair $(X,W)$, and $J:\Delta_2(X,W)\to X$ be the standard inclusion. Then $T$ factors through $\Delta_2(X,W)$ and both $J$ and $J^{-1}\circ T$ are weak$^*$-to-weak$^*$ continuous.
\end{lemma}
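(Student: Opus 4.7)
The plan is to invoke the standard Davis--Figiel--Johnson--Pe\l czy\'nski factorization to produce the interpolation space $\Delta_2(X,W)$ and the factoring maps $J,S$, and then separately verify the two weak$^*$-continuity claims using the reflexivity of $\Delta_2(X,W)$ (which follows from the weak compactness of $W$). First, since $W$ is convex, symmetric, bounded, and contains $T(B_Y)$, the original argument in \cite[Lemma 1]{DavisFiegelJohnsonPelczynski1974} yields a bounded operator $S\colon Y\to \Delta_2(X,W)$ with $T = J\circ S$; the principal conclusion of that same paper ensures that $\Delta_2(X,W)$ is reflexive, because $W$ is weakly compact.

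The weak$^*$-to-weak$^*$ continuity of $J$ is essentially automatic. Every bounded linear operator is weak-to-weak continuous; reflexivity of $\Delta_2(X,W)$ makes its weak$^*$ topology coincide with its weak topology, while on the dual space $X$ the weak$^*$ topology is coarser than the weak topology. Composing these observations gives weak$^*$-to-weak$^*$ continuity of $J$.

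The main obstacle, and the real content of the lemma, is the weak$^*$-to-weak$^*$ continuity of $S = J^{-1}\circ T$. By reflexivity of $\Delta_2(X,W)$ this amounts to weak$^*$-to-weak continuity. Testing against functionals $z^*\in \Delta_2(X,W)^*$ and applying the Krein--\v Smulian theorem to the kernel of each $z^*\circ S$ reduces the problem to weak$^*$-to-weak continuity on the unit ball of $Y$. So let $(y_\alpha)$ be a bounded net in $Y$ converging weak$^*$ to some $y\in Y$. The hypothesis on $T$ gives $T(y_\alpha)\to T(y)$ in the weak$^*$ topology of $X$; the weak compactness of $T$ places the bounded net $(T(y_\alpha))$ in a weakly compact subset of $X$, and since weak$^*$ limits are unique and weak convergence implies weak$^*$ convergence, every weakly convergent subnet of $(T(y_\alpha))$ must converge to $T(y)$, whence $T(y_\alpha)\to T(y)$ weakly in $X$. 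Now $(S(y_\alpha))$ is bounded in the reflexive space $\Delta_2(X,W)$, so every subnet admits a further weakly convergent sub-subnet with some limit $z\in\Delta_2(X,W)$. Applying the weak-to-weak continuous operator $J$ and using its injectivity yields $J(z) = T(y) = J(S(y))$, hence $z = S(y)$. Thus every subnet of $(S(y_\alpha))$ has a sub-subnet converging weakly to $S(y)$, so the whole net does, completing the verification.
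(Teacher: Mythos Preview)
Your proof is correct. The factorization and the argument for $J$ match the paper's essentially verbatim; the difference lies in how you handle the weak$^*$-to-weak$^*$ continuity of $S=J^{-1}\circ T$.

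The paper's route is duality-based: having already shown that $J$ is weak$^*$-to-weak$^*$ continuous, it takes the pre-adjoint $J_*\colon X_*\to Z^*$, observes that $J_*$ has dense range because $J$ is injective, and then checks $\langle S(y_i),J_*(x_*)\rangle=\langle T(y_i),x_*\rangle\to 0$ directly from the weak$^*$ continuity of $T$. Your route is compactness-based: you reduce to bounded nets via Krein--\v{S}mulian, then exploit reflexivity of $\Delta_2(X,W)$ to extract weakly convergent subnets of $(S(y_\alpha))$ and pin down their limit using the injectivity of $J$. One small remark: your intermediate step upgrading $T(y_\alpha)\to T(y)$ from weak$^*$ to weak convergence (via the weak compactness of $T$) is correct but unnecessary---the later argument only needs the weak$^*$ limit, since $J$ is already weak-to-weak$^*$ continuous. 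On the other hand, your explicit invocation of Krein--\v{S}mulian is a genuine improvement in rigor: the paper's density argument tacitly requires the net $(S(y_i))$ to be bounded, which is only guaranteed once one has reduced to bounded nets in $Y$, a step the paper does not spell out.
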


\begin{proof}
Let $Z=\Delta_2(X,W)$. Since $T(B_Y)\subset W$, $J^{-1}\circ T:Y\to Z$ is well defined, so $T$ factors through $Z$, since $T=J\circ J^ {-1}\circ T$. 

Since $T$ is a weakly compact operator, $T(B_Y)$ is a relatively weakly compact subset, and \cite[Lemma 1]{DavisFiegelJohnsonPelczynski1974} implies that $Z$ is reflexive. So $J$ is weak$^*$-to-weak continuous. 

It is only left to show that $J^{-1}\circ T$ is weak$^*$-to-weak$^*$ continuous. For that, let $(y_i)_{i\in I}$ be a weak$^*$ null net in $Y$. Let $Y_*$ and $X_*$ denote the preduals of $Y$ and $X$, respectively. Since $J:Y\to X$ is weak$^*$-to-weak$^*$ continuous, there exists a bounded map $J_*:X_*\to Y_*$ such that $J=(J_*)^*$. Since $J$ is injective, it follows fro Hahn-Banach that $J_*$ has dense range. Hence, in order to show that $(J^{-1}\circ T(y_i))_{i\in I}$  is a weak$^*$ null net, it is enough to notice the following. Let $x_*\in X_*$. Then
\[
\Big(J^{-1}\circ T(y_i)\Big)\Big(J_*(x_*)\Big)=\Big((J_*)^*\circ J^{-1}\circ T(y_i) \Big)(x_*)=T(y_i) (x_*).
\]
Since $T$ is weak$^*$-to-weak$^*$ continuous, the net $(J^{-1}\circ T(y_i)(J_*(x_*)))_{i\in I}$ converges to zero. So, $J^{-1}\circ T$ is weak$^*$-to-weak$^*$ continuous.
\end{proof}

\begin{theorem}\label{ThmApliFactDUALS}
Both $\mathcal{W}_{\mathrm{ub}^*,\SD}$ and  $\mathcal{W}_{\mathrm{sb}^*,\SD}$ are complementably strongly bounded with respect to the class of separable reflexive spaces admitting Schauder bases. 
\end{theorem}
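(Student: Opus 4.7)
The plan is to reduce Theorem \ref{ThmApliFactDUALS} to the already-proved Theorem \ref{ThmBeanlandFreeman} (for $\cW_{\mathrm{ub}^*,\SD}$) and Theorem \ref{BF-main} (for $\cW_{\mathrm{sb}^*,\SD}$) by passing to adjoints. Concretely, given an analytic $\cA\subset\cW_{\mathrm{ub}^*,\SD}$, I first note that $\mathrm{ub}^*\subset \SD$ (an unconditional basis on $X^*$ forces $X^*$ separable), so $\cA\subset\cL_{\SD}$. Lusin's separation theorem provides a Borel $\cB\subset\cL_{\SD}$ containing $\cA$, and Theorem \ref{ThmLSDmap} yields a Borel map $(X,Y,\hat A)\in\cB\mapsto(Y^\bullet,X^\bullet,\hat A^\bullet)\in\cL$ together with isometries $j_X:X^\bullet\to X^*$, $j_Y:Y^\bullet\to Y^*$ implementing $A^\bullet\equiv A^*$. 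The image $\cA^\bullet$ of $\cA$ is analytic; its elements are weakly compact (Gantmacher) and their codomains $X^\bullet$ inherit the unconditional basis of $X^*$, so $\cA^\bullet\subset\cW_{\SB,\mathrm{ub}}$. Theorem \ref{ThmBeanlandFreeman} then produces a single reflexive $Z\in\SB$ with a Schauder basis through which every $A^\bullet$ factors, and pre/post-composing with $j_Y^{-1}$ and $j_X$ transports this to a factorization $A^*=C_A\circ B_A$ with $B_A:Y^*\to Z$ and $C_A:Z\to X^*$.

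Next I would dualize this factorization back down to $A$ itself, with the aim of factoring $A$ through $Z^*$. Note that $Z^*$ is reflexive with a Schauder basis: any basis of a reflexive space is shrinking, so its biorthogonal sequence forms a Schauder basis of the dual. If $B_A$ and $C_A$ are both weak$^*$-to-weak$^*$ continuous, then they are adjoints of operators $\tilde C_A:X\to Z^*$ and $\tilde B_A:Z^*\to Y$, and a direct computation of bilinear forms shows $A=\tilde B_A\circ\tilde C_A$, thereby factoring $A$ through the reflexive space $Z^*$ with its Schauder basis.

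The main obstacle is therefore to guarantee weak$^*$-to-weak$^*$ continuity of the chosen factorization of $A^*$, and this is exactly what Lemma \ref{LemmaDFJP} is designed to deliver. Inspecting the proof of Theorem \ref{ThmBeanlandFreeman} (through Lemma \ref{LemmaProposition14} and Construction \ref{Construction}), the factorization produced is the DFJP factorization $A^\bullet=J\circ(J^{-1}\circ A^\bullet)$ through $Z_{A^\bullet}=\Delta_2(X^\bullet,W_{A^\bullet})$, composed with a complemented embedding $Z_{A^\bullet}\hookrightarrow Z$ and a bounded projection back. On the dual side (after transport via the $j$'s), $A^*$ is automatically weak$^*$-to-weak$^*$ continuous, and the DFJP set $W_{A^*}=\overline{\mathrm{conv}}\bigcup_n P_n(A^*(B_{Y^*}))$ is weakly compact by Lemma \ref{PropJohnson}, so Lemma \ref{LemmaDFJP} applies and makes both $J:Z_{A^*}\to X^*$ and $J^{-1}\circ A^*:Y^*\to Z_{A^*}$ weak$^*$-to-weak$^*$ continuous. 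The complementing embedding and projection are bounded maps between the reflexive spaces $Z_{A^*}$ and $Z$, hence weak-to-weak (equivalently weak$^*$-to-weak$^*$) continuous. Composing yields the required weak$^*$-to-weak$^*$ factorization of $A^*$, which completes the argument. The proof for $\cW_{\mathrm{sb}^*,\SD}$ is identical, invoking Theorem \ref{BF-main} instead of Theorem \ref{ThmBeanlandFreeman} and using that $\mathrm{sb}^*\subset\SD$ (a shrinking basis on $X^*$ forces $X^{**}$, hence $X^*$, separable).
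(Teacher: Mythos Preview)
Your proposal is correct and follows essentially the same approach as the paper: pass to adjoints via Theorem \ref{ThmLSDmap}, apply the already-known factorization result to the analytic image $\cA^\bullet$, and then use Lemma \ref{LemmaDFJP} to ensure the DFJP factorization is weak$^*$-to-weak$^*$ continuous so that it descends by predualization to a factorization of $A$ through the (pre)dual of $Z$. The only cosmetic difference is that the paper predualizes the two legs of the DFJP factorization through $\Delta_2(X^\bullet,W_A)$ and then uses that this space is complemented in $Z$, whereas you compose with the complementing maps first and then predualize; since all intermediate spaces are reflexive, both routes are equivalent.
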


\begin{proof}
We first prove the statement for $\mathcal{W}_{\mathrm{sb}^*,\SD}$. For that, let $\cA\subset \mathcal{W}_{\mathrm{sb}^*,\SD}$ be an analytic subset. Since  $\cW$ is coanalytic  \cite[Proposition 22]{BeanlandFreeman2014},  so is $\cW_{\SD}$. Hence, Lusin's theorem \cite[Theorem 18.1]{Ke-book} gives   a Borel subset $\cB\subset \cW_{\SD}$   such that $\cA\subset \cB$. Let \[(X,Y,\hat A)\in\cB\mapsto (Y^\bullet,X^\bullet, \hat A^\bullet)\in \cL\] (or $A\in \cB\mapsto A^\bullet\in \cL$ for short) be the Borel assignment given  by Theorem \ref{ThmLSDmap}. Then  the set
\[\cA^\bullet=\{(Y^\bullet,X^\bullet,\hat A^\bullet):   (X,Y,\hat A)\in \cA\},\]
i.e., the image of $\cA$ under this assignment, 
is analytic. Since $X^\bullet$ as a shrinking basis for all $(Y^\bullet,X^\bullet,\hat A^\bullet)\in \cA^\bullet$,  \cite[Theorem 25]{BeanlandFreeman2014} gives  a  reflexive Banach space $Z$ with a Schauder basis such that every $A^\bullet\in \cA^\bullet$ factors through $Z$. Moreover, it follows from the proof of \cite[Theorem 25]{BeanlandFreeman2014} that for every $A^\bullet:Y^\bullet\to X^\bullet $ in $\cA^\bullet$ there exists a weakly compact, convex, symmetric, and bounded subset $W_A\subset X^\bullet$ such that $A^\bullet(B(Y^\bullet))\subset W_A$  and  $\Delta_2(X^\bullet,W_A)$ is isomorphic to a complemented subspace of $Z$. For each such $A^\bullet\in \cA^\bullet$, let $J_A:\Delta_2(X^\bullet,W_A)\to X^\bullet$ be the standard inclusion. 

By Theorem \ref{ThmLSDmap}, we can identify $X^*$, $Y^*$, and $A^*$ with $X^\bullet$, $Y^\bullet$, and $A^\bullet$, respectively. By Lemma \ref{LemmaDFJP}, $J_A$ and $J_A^{-1} A^\bullet$ are weak$^*$-to-weak$^ *$ continuous.  Hence, there exist maps $U_A:X\to \Delta_2(X^\bullet,W_A)_*$ and 
$V_A:\Delta_2(X^\bullet,W_A)_*\to Y$ such that $U_A^*=J_A$ and $V_A^*=J^ {-1}_A A^\bullet$, where $\Delta_2(X^\bullet,W_A)_*$ denotes   the predual of $\Delta_2(X^\bullet,W_A)$.  This gives us that 
\[(V_AU_A)^*=U_A^*V_A^*=J_AJ_A^{-1}  A^\bullet=A^\bullet=A^*.\]
So, $V_AU_A=A$. This gives us that  every $A\in \cA$ factors through the predual of $Z$, and we are done.

In order to show that   $\mathcal{W}_{\mathrm{ub}^*,\SD}$ is strongly bounded with respect to the class of separable reflexive Banach spaces with a shrinking basis, we only need to proceed exactly as for $\mathcal{W}_{\mathrm{sb}^*,\SD}$ but using Theorem \ref{ThmBeanlandFreeman} instead of \cite[Theorem 25]{BeanlandFreeman2014}. We leave this task to the reader.
\end{proof}

\begin{proof}[Proof of Theorem \ref{ThmMainFactResult}]
This follows from Theorem \ref{ThmBeanlandFreeman} and Theorem \ref{ThmApliFactDUALS}.
\end{proof}

Our penultimate result of this subsection, Theorem \ref{ThmL1},  is an application of Theorem \ref{ThmBeanlandFreeman}. But first, we need the following proposition.

\begin{proposition}\label{PropL1}
The subset $\cW_{\SB,L_1}$ is analytic. 
\end{proposition}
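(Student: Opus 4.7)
The plan is to exploit the Kadec--Pe{\l}czy{\'n}ski characterization of relatively weakly compact subsets of $L_1$---these are exactly the uniformly integrable sets---to reduce weak compactness of an operator into a fixed copy of $L_1$ to a Borel condition, and then to handle a general codomain $Y\cong L_1$ by existentially quantifying over an isomorphism $Y\to L_1$.

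First, I would fix a specific isometric copy $L_1\in\SB$ of the classical space $L_1([0,1])$ and argue that
\[\cW^0:=\{(X,L_1,\hat A)\in\cL: A\text{ is weakly compact}\}\]
is a Borel subset of $\cL$. For this I would use the classical characterization already invoked in Lemma \ref{PropJohnson}: $A(B_X)$ is relatively weakly compact in $L_1$ iff it is uniformly integrable, iff for every $\eps\in\Q_{>0}$ there exists $M\in\Q_{>0}$ with $\int (|f|-M)_+\,d\mu\leq\eps$ for all $f\in A(B_X)$. The truncated-tail functional $f\mapsto \int (|f|-M)_+\,d\mu$ is $1$-Lipschitz in the $L_1$-norm, so the universal quantifier over $f\in A(B_X)$ can be replaced by one over a countable dense subset of $B_X$, concretely the rational combinations of $(d_n(X))_n$ lying in $B_X$. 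This yields a $\forall\eps\,\exists M\,\forall n$ clause over $\Q_{>0}\times\Q_{>0}\times\N$ whose innermost predicate is Borel in $(X,\hat A)$, so $\cW^0$ is Borel.

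Next, I would observe that
\[(X,Y,\hat A)\in\cW_{\SB,L_1}\iff \exists\,\hat\phi\in C(\Delta)^{\N}\ \text{with}\ (Y,L_1,\hat\phi)\in\cL,\ \phi\ \text{an isomorphism, and}\ (X,L_1,\widehat{\phi\circ A})\in\cW^0,\]
where $\widehat{\phi\circ A}(n):=\phi(\hat A(n))$ codes the composition. The property ``$\hat\phi$ codes an isomorphism $Y\to L_1$'' is Borel: being bounded below unfolds as $\exists c\in\Q_{>0}\,\forall n,\,\|\phi(d_n(Y))\|\geq c\|d_n(Y)\|$, and having dense range is a countable conjunction of rational approximation conditions involving the $(d_m(L_1))_m$. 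The composition assignment $(\hat\phi,\hat A)\mapsto \widehat{\phi\circ A}$ is Borel because evaluating a coded operator on an element of its coded domain is Borel. Hence $\cW_{\SB,L_1}$ is the projection onto $\cL$ of a Borel subset of $\cL\times C(\Delta)^{\N}$, which makes it analytic.

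The main obstacle will be the coding bookkeeping: one must ensure that truncation of elements of $L_1$, integration against Lebesgue measure, the bounded-below and dense-range conditions on $\phi$, and the composition $\phi\circ A$ all translate to Borel conditions on the standard Borel space $\cL$. Each of these checks is routine, but every one must be phrased so that uncountable quantifiers over Banach-space elements are systematically replaced by countable quantifiers over the dense sequences $(d_n(\cdot))_n$ supplied by the coding in Subsection \ref{SubsectionCodeBoundedOp}.
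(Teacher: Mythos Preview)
Your proposal is correct and follows essentially the same two-step strategy as the paper: first show that weak compactness for operators into a \emph{fixed} copy of $L_1$ is a definable (here Borel, in the paper merely analytic) condition via a uniform-integrability criterion, and then obtain analyticity of $\cW_{\SB,L_1}$ by existentially quantifying over an isomorphism $Y\to L_1$. The only cosmetic differences are that the paper uses the equivalent criterion $A(B_X)\subset M B_{L_2}+\eps B_{L_1}$ (after showing $B_{L_2}$ is closed in $L_1$) in place of your tail-truncation functional, and codes the isomorphism by a basis $(z_j)_j$ of $Y$ equivalent to a fixed basis of $L_1$ rather than by $\hat\phi$.
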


\begin{proof} 
 We start by showing that  $\{(X,Y,\hat A)\in \cW\colon Y=L_1\}$ is analytic. For each $n\in\N$ and each $f\in L_1$, let $f_n\in L_1$ be a function which coincides with $f$ in   $f^{-1}([-n,n])$, equals $n$ in $f^{-1}([n,\infty))$, and equal $-n$ in  $f^{-1}((-\infty,-n])$. Then the map $f\in L_1\to f_n^ 2\in L_1$ is clearly well defined and it is continuous. Since 
 \[B_{L_2}=\bigcap_{n\in\N}\Big\{f\in L_1\colon \int f_n^ 2d\mu\leq 1\Big\},\]
 it follows that $B_{L_2}$ is a closed subset of $L_1$.

In order to conclude the proof, we use once again that a bounded subset $W\subset L_1$ is weakly compact if and only if for all $\eps>0$ there exists $M>0$ so that $W\subset MB_{L_2}+\eps B_{L_1}$ (cf.\cite[Lemma 5.2.6 and Theorem 5.2.9]{AlbK-book}). Given $M,\eps>0$, define a Borel map $S_{M,\eps}:(x,y)\in L_1\times L_1\to Mx+\eps y\in L_1$. Then, letting $\cC=\{(X,Y,\hat A)\in \cW\colon Y=L_1\}$, we have that
\begin{align*}
(X,L_1,\hat A)\in \cC\ \Leftrightarrow\  &\forall \eps\in \Q_{>0},\ \exists M\in \Q_{>0},\  \forall n\in\N \\
& \hat A(n)=0\vee \frac{\hat A(n)}{\|\hat A(n)\|}\in S_{M,\eps}(B_{L_2}\times B_{L_1}).
\end{align*}
 This is an analytic condition, so   $\{(X,Y,\hat A)\in \cW\colon Y=L_1\}$ is analytic.

We will now observe that  $\cW_{\SB,L_1}$ is analytic.  For that, fix a Schauder basis $(e_j)_j$ for $L_1$ and given a basic sequence $(z_j)_j$ in $C(\Delta)$ so that $(z_j)_j\sim (e_j)_j$ let $I_{(z_j)_j}:\overline{\Span}\{z_j\colon j\in\N\}\to L_1$ be the isomorphism given by $z_j\mapsto e_j$. Then we only need to notice that 
\begin{align*}
(X,Y,\hat A)\in \cW_{\SB,L_1}\ \Leftrightarrow\ &\exists (z_j)_j\in Y^\N \big((z_j)_j\text{ is a Schauder basis for }Y\big)\\
&\wedge \big((z_j)_j\sim(e_j)_j\big)\wedge \big((X,L_1,(I_{(z_j)_j}(\hat A(n)))_n)\in \cC\big).
\end{align*}
Indeed,  the conditions  ``$(z_j)_j$ is a Schauder basis for $Y$'' and  ``$(z_j)_j\sim(e_j)_j$'' are clearly Borel conditions. Moreover, let  
\begin{align*}\cD= \{(Y,(z_j)_j,z)\in \SB & \times  C(\Delta)^\N\times C(\Delta)\colon  z\in Y\text{ and}\\
&(z_j)_j\text{ is a Schauder basis for }Y\text{ and }(z_j)_j\sim(e_j)_j\},
\end{align*}
so the assignment $(Y,(z_j)_j,z)\in \cD\mapsto I_{(z_j)_j}(z)\in L_1$ is Borel (cf. \cite[Lemma 4.8]{Braga2015Fourier}). This shows  that $\cW_{\SB,L_1}$ is analytic.
\end{proof}

\begin{proof}[Proof of Theorem \ref{ThmL1}]
This  follows from Theorem \ref{ThmBeanlandFreeman} and Proposition \ref{PropL1}.
\end{proof}

We conclude this section with the following proposition regarding the complexities of the other classes we are considering. 

\begin{proposition}
The subsets $\cW_{\SD,\mathrm{sb}}$, $\cW_{\mathrm{sb}^*,\SD}$, $\cW_{\SB,\mathrm{ub}}$, and $\cW_{\mathrm{ub}^*,\SD}$ are not analytic.
\end{proposition}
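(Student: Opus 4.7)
The plan is a projection argument that reduces the non-analyticity of each of the four sets to the already-recorded non-analyticity of $\SD$ (and, in one case, of $\mathrm{ub}$). The key observation is that the constant zero sequence $\mathbf{0} \in C(\Delta)^\N$ codes the zero operator $0: X \to Y$ between any pair $X,Y \in \SB$, and the zero operator is trivially weakly compact. Hence the map $\zeta : \SB \times \SB \to \cL$ defined by $\zeta(X,Y) = (X, Y, \mathbf{0})$ is Borel and lands in $\cW$.

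For any nonempty $\mathbb{A}, \mathbb{B} \subset \SB$, the restriction of $\zeta$ to $\mathbb{A} \times \mathbb{B}$ takes values in $\cW_{\mathbb{A},\mathbb{B}}$, and conversely the two $\SB$-coordinates of any element of $\cW_{\mathbb{A},\mathbb{B}}$ lie in $\mathbb{A}$ and $\mathbb{B}$. Hence the projections of $\cW_{\mathbb{A},\mathbb{B}}$ onto its first and second $\SB$-coordinates are exactly $\mathbb{A}$ and $\mathbb{B}$, respectively. Since analytic sets are closed under Borel (in particular continuous) images, if $\cW_{\mathbb{A},\mathbb{B}}$ were analytic, both slots $\mathbb{A}$ and $\mathbb{B}$ would be analytic.

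I now apply this to each of the four sets. For $\cW_{\SD,\mathrm{sb}}$, $\cW_{\mathrm{sb}^*,\SD}$, and $\cW_{\mathrm{ub}^*,\SD}$, the set $\SD$ occupies one of the two slots, and $\SD$ is complete coanalytic (hence not analytic) as recorded in Subsection \ref{SubsectionCodeBoundedOp}. Each such case therefore yields an immediate contradiction. For $\cW_{\SB, \mathrm{ub}}$ the first slot is the full (Borel) space $\SB$, so the contradiction must come from the second slot: one invokes the known fact from the descriptive set theory of Banach spaces that the class $\mathrm{ub}$ is itself non-analytic, obtained by a reduction from well-founded trees $\WF$ analogous to the one exhibited for $\SD$.

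The main obstacle lies only in the last ingredient, namely ensuring a suitable citation (or self-contained reduction) for the non-analyticity of $\mathrm{ub}$; the projection scheme itself is elementary and applies uniformly to all four cases. If one wished to avoid relying on $\mathrm{ub}$ directly, an alternative would be to work with $\mathrm{ub}^*$-valued operators and apply the adjoint machinery of Theorem \ref{ThmLSDmap}, but the direct projection is cleaner.
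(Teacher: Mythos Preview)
Your projection argument via the zero operator is clean and correct for three of the four cases: whenever $\SD$ occupies one slot (namely $\cW_{\SD,\mathrm{sb}}$, $\cW_{\mathrm{sb}^*,\SD}$, and $\cW_{\mathrm{ub}^*,\SD}$), the projection onto that slot is exactly $\SD$, and non-analyticity follows immediately. This is a genuinely more elementary route than the paper's, which instead builds Borel reductions $\Tr\to\cL$ sending $T$ to the identity on $\varphi(T)$ for a suitable Borel $\varphi:\Tr\to\SB$ with $\varphi^{-1}(\mathrm{REFL}_{\mathrm{b}})=\WF$.

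However, the argument breaks down for $\cW_{\SB,\mathrm{ub}}$, and the gap is not a missing citation but a false premise: the class $\mathrm{ub}$ \emph{is} analytic. Indeed, the set
\[
\big\{(X,(e_n)_n)\in \SB\times C(\Delta)^\N : e_n\in X\ \forall n,\ (e_n)_n\text{ is an unconditional basis of }X\big\}
\]
is Borel, since each of the defining conditions (membership in $X$, being an unconditional basic sequence, and having dense linear span---the latter witnessed via the selectors $d_m(X)$) is Borel; hence $\mathrm{ub}$ is the projection of a Borel set. Both coordinate projections of $\cW_{\SB,\mathrm{ub}}$ are therefore analytic, and no contradiction arises.

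The non-analyticity of $\cW_{\SB,\mathrm{ub}}$ genuinely stems from the weak-compactness condition, not from the codomain constraint. The paper accordingly invokes a reduction $\Phi:\Tr\to\cL$ from \cite[Proposition 8.6]{BCFrWa-JFA} satisfying $\Phi^{-1}(\cW_{\SB,\mathrm{ub}})=\WF$: one fixes the codomain inside $\mathrm{ub}$ and constructs operators whose weak compactness is equivalent to well-foundedness of $T$. Your suggested alternative through Theorem~\ref{ThmLSDmap} does not salvage the case either, since for $Y\in\mathrm{ub}$ the dual $Y^*$ need not be separable (take $Y=\ell_1$), so the adjoint machinery does not apply.
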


\begin{proof}
The proof consists of citing work from previous papers. First recall that the coding for the  reflexive Banach spaces with a Schauder basis, denoted by $\mathrm{REFL}_{\mathrm{b}}\subset \SB$, is complete coanalytic \cite[Corollary 3.3]{Bos-FunD}. Hence, there exists a Borel reduction $\varphi:\Tr\to\SB$ of the set of well-founded trees $\WF$ to $\mathrm{REFL}_{\mathrm{b}}$, i.e., $\varphi^{-1}(\mathrm{REFL}_{\mathrm{b}})=\WF$. Therefore, the map $\Phi:\Tr\to \cL$ given by \[\Phi(T)=(\varphi(T),\varphi(T), (d_n(\varphi(T)))_n)\in \cL\]
is a Borel reduction of $\WF$ to both $\cW_{\SD,\mathrm{sb}}$  and $\cW_{\mathrm{sb}^*,\SD}$. In particular, both $\cW_{\SD,\mathrm{sb}}$  and $\cW_{\mathrm{sb}^*,\SD}$ are not analytic. 

For the cases, $\cW_{\SB,\mathrm{ub}}$ and $\cW_{\mathrm{ub}^*,\SD}$ it suffices to consult the proof of \cite[Proposition 8.6]{BCFrWa-JFA}. Likewise, there is a Borel map $\Phi:\mathrm{Tr} \to \cL$ so that so that $\Phi^{-1}(\mathcal{W}_{\SB,\mathrm{ub}})=\mathrm{WF}$ and $\Phi^{-1}(\mathcal{W}_{\mathrm{ub}^*,\SD})=\mathrm{WF}$. 
\end{proof}

\section{Factoring weakly compact operators through subspaces of a single space}\label{SectionThmBorelFact}

The main result of this section is Theorem \ref{ThmFactoringSubspaces} below. Recall, if $S$ is a standard Borel space, then $\sigma_S(\Sigma_1^1)$ denotes the sigma algebra on $S$ generated by the analytic subsets of $S$. A map $S\to M$ between standard Borel spaces  is called \emph{$\sigma(\Sigma_1^1)$-measurable} if the preimage of every Borel subset of $M$ is in $\sigma_S(\Sigma_1^1)$.

The second named author and R. Causey proved in \cite[Theorem 5.8]{BeanlandCausey2017} that $\cW$ is strongly bounded with respect to the class of separable Banach spaces. The next result gives us that $\cW_{\SB,\SD}$ satisfies an even stronger property. Not only $\cW_{\SB,\SD}$ is strongly bounded, but the choice of the space through which the operators will factor can be done in a $\sigma(\Sigma_1^ 1)$-measurable way.

 \begin{theorem}\label{ThmFactoringSubspaces}
Let $\mathcal{B}\subset \mathcal{W}_{\SB,\SD}$ be a Borel subset. Then there is a reflexive space $Z \in \SB$ and a $\sigma(\Sigma_1^1)$-measurable $\Psi:\mathcal{B} \to \SB(Z)$ so that each $A=(X,Y,\hat A)\in \cB$ factors through $\Psi(A)$. 

Moreover, setting 
\[\cD=\{(X,Y,\hat A,x)\in \cB\times C(\Delta): x\in X\},\]
there exists a $\sigma(\Sigma_1^1)$-measurable map $\Phi:\cD\to Z$ so that, letting $\Phi_A=(X,Y,\hat A,\cdot)$, we have that, for each $A=(X,Y,\hat A)\in \cB$,
\begin{enumerate}
\item \label{Item1} $\Phi_A(X)=\Psi(A)$,
    \item\label{Item2} $\Phi_A:X\to Z$ is a bounded linear map with norm at most 1, and
    \item\label{Item3} there exists a bounded operator $L:\Psi(A)\to Y$ so that $A=L\circ \Phi_A$. 
\end{enumerate} 
\end{theorem}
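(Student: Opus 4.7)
The plan is to apply the DFJP interpolation scheme of Construction \ref{Construction} uniformly in $A$ to obtain, for each $A \in \cB$, a reflexive space $Z_A$ through which $A$ factors, and then invoke the strong boundedness of the class of separable reflexive Banach spaces to embed all such $Z_A$ as subspaces of a single separable reflexive $Z \in \SB$, carefully tracking measurability throughout.

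The first step is to show that the assignment $A = (X, Y, \hat A) \in \cB \mapsto W_A \in \cF_0(C(\Delta))$ is Borel, where $W_A$ denotes the closed convex hull of $A(B_X)$ inside $Y \subset C(\Delta)$ (automatically symmetric and bounded). For open $U \subset C(\Delta)$, the condition $W_A \cap U \neq \emptyset$ can be expressed as a Borel condition on $\hat A$ using rational convex combinations of the vectors $\hat A(n)/\|d_n(X)\|$. Since $A$ is weakly compact, each $W_A$ is weakly compact. One then applies Construction \ref{Construction} with $E = C(\Delta)$ equipped with a fixed Schauder basis (obtained via an auxiliary embedding of $C(\Delta)$ into a space with basis), with the Borel assignment $A \mapsto (Y, W_A)$, and with suitable $y_1, y_2$. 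This yields a reflexive Banach space $Z_A = \Delta_2(C(\Delta), W_{(Y, W_A)})$ together with bounded operators $B_A : X \to Z_A$ and $C_A : Z_A \to Y$ satisfying $A = C_A \circ B_A$, all varying Borel-measurably in $A$. Viewed inside a fixed universal Polish space, the assignment $A \mapsto Z_A \in \SB$ is Borel and lands in the separable reflexive Banach spaces.

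Next, the image $\{Z_A : A \in \cB\}$ is an analytic subset of $\SB$ consisting entirely of reflexive spaces. By the strong boundedness of the separable reflexive Banach spaces (Bossard, Argyros--Dodos), there exists a single separable reflexive $Z \in \SB$ such that every $Z_A$ embeds isomorphically into $Z$. The analytic relation ``$T \in \cL$ codes an isomorphic embedding $Z_A \hookrightarrow Z$'' admits, by Jankov--von Neumann uniformization, a $\sigma(\Sigma_1^1)$-measurable selection $A \mapsto J_A$. Rescaling, define $\Phi_A = (J_A \circ B_A) / M_A$ with $M_A = \max\{1, \|J_A\| \cdot \|B_A\|\}$ so that $\|\Phi_A\| \leq 1$, set $\Psi(A) = \overline{\Phi_A(X)} \subset Z$ (yielding item \eqref{Item1}), and take $L = M_A \cdot (C_A \circ J_A^{-1})|_{\Psi(A)}$, a bounded operator satisfying $A = L \circ \Phi_A$ (items \eqref{Item2} and \eqref{Item3}). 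The joint $\sigma(\Sigma_1^1)$-measurability of $\Phi : \cD \to Z$ follows by combining the Borel dependence of $B_A$ on $A$ with the $\sigma(\Sigma_1^1)$-measurability of $A \mapsto J_A$ and evaluation in the universal space.

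The main obstacle is the passage from the abstract existence of the universal reflexive space $Z$ (supplied by strong boundedness) to an explicit $\sigma(\Sigma_1^1)$-measurable selection of the embeddings $J_A$. In practice this requires taking $Z$ to be a reflexive space with a Schauder basis (such as one of the recent universal reflexive spaces with a basis), reducing the selection of $J_A$ to the selection of a basic sequence in $Z$ equivalent to the basis $(z^{(Y,W_A)}_n)_n$ of $Z_A$ supplied by item (7) of Construction \ref{Construction}; measurable selection of such basic sequences again follows from Jankov--von Neumann applied to the analytic graph of equivalence between basic sequences in the universal basis. Keeping the norms and factorization constants uniformly controlled and upgrading sectionwise measurability to joint $\sigma(\Sigma_1^1)$-measurability of $\Phi$ on all of $\cD$ requires standard but careful bookkeeping.
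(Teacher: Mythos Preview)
There is a genuine gap in your first step. You propose to run Construction \ref{Construction} with $E=C(\Delta)$ equipped with a \emph{fixed} Schauder basis and conclude that $Z_A=\Delta_2(C(\Delta),W_{(Y,W_A)})$ is reflexive. But reflexivity of the interpolation space requires the enlarged set $W_{(Y,W_A)}$ (built from the partial-sum projections applied to $E_{W_A}$) to be weakly compact, and for an arbitrary basis of the non-reflexive space $C(\Delta)$ this fails: the projections need not send weakly compact sets to a family with weakly compact closed convex hull. Lemma \ref{PropJohnson} establishes exactly this property, but only for an unconditional basis or for the Haar basis of $L_1$---not for a generic basis of $C(\Delta)$. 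The paper circumvents this by using Bossard's Borel assignment $Y\mapsto(e_n^Y)_n$ of \emph{$Y$-dependent} bases of $C(\Delta)$, engineered so that property 4' holds: $Z_{(Y,W)}$ is reflexive whenever $W$ is weakly compact, and moreover $(z_n^{(Y,W)})_n$ is shrinking. Without this, your $Z_A$ need not be reflexive, and the rest of the argument collapses.

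A second point: the assertion ``the assignment $A\mapsto Z_A\in\SB$ is Borel'' is precisely the nontrivial content of the paper's Theorem \ref{ThmFactoringBasis}, which realizes the abstract space $Z_A$ concretely in $C(\Delta)$ via the dual-ball coding $K_A\subset[-1,1]^\N$ and the map $H$ of Lemma \ref{LemmaH}. You cannot simply declare this; it requires the machinery of Lemmas \ref{LemmaAssignAtoKABorel}--\ref{LemmaAlphaPrime}.

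Finally, once the reflexive $Z_A$ with shrinking basis are in hand, the paper does \emph{not} invoke strong boundedness of $\mathrm{REFL}$ as a black box. It builds the universal $Z$ explicitly: code each basis $\sigma(A)$ by a branch $\psi(\sigma(A))\in\N^\N$ through rational norms, amalgamate over a tree $T$ projecting onto the analytic set $\psi(\sigma(\cB))$ (Kurka's method, Subsection \ref{SubsecRationalAmalg}), and take a further DFJP interpolation. Your route via abstract strong boundedness plus Jankov--von Neumann selection of embeddings is a legitimate alternative once the gap above is repaired, and is arguably more conceptual; the paper's explicit amalgamation buys a concrete description of $Z$ and of the isometric embeddings $I_A$, at the cost of more bookkeeping.
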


The proof of Theorem \ref{ThmFactoringSubspaces} will take this entire section. We start setting some notation. Precisely, given a Borel $\B\subset \SD$, we will need to evoke a construction of a Borel map $Y\in \B\mapsto (e^Y_n)_n\in C(\Delta)^\N$ given by 
 B. Bossard. Since this construction is rather technical, we chose to simply present here the properties of it which are necessary for our settings and refer the reader to the appropriate sources.

 Precisely, B. Bossard showed that there exists a Borel map (see \cite[Claim 5.21]{Do-Book})
 \begin{equation}\label{EqBossardMap}
 Y\in \B\mapsto (e^Y_n)_n\in C(\Delta)^\N\tag{$*$}
 \end{equation}
 with the following properties:
 \begin{enumerate}
     \item $(e^Y_n)_n$ is a  monotone Schauder basis of $C(\Delta)$ for all $Y\in \B$ \cite[Page 5.13]{Do-Book}.
 \end{enumerate}
Considering the objects in Construction \ref{Construction} associated  to $C(\Delta)$, the (partial) assignment $Y\in \B\mapsto (e^{Y}_n)_n\in C(\Delta)^\N$ and $1,y\in C(\Delta)$, where $y$ is any normalized function which separates points of $\Delta$, it holds that 
\begin{enumerate}\setcounter{enumi}{1}
\item $B_Y\subset W_{(Y,B_Y)}$ for all $Y\in \B$ -- in particular, $Y$ isometrically embeds into $Z_{(Y,B_Y)}$ \cite[Lemma 4]{Kurka2016PAMS} --,
    \item $(z^{(Y,B_Y)}_n)_n$ is a monotone shrinking basis for $Z_{(Y,B_Y)}$ for all $Y\in \B$ \cite[Page 83]{Do-Book},
    \item  $Z_{(Y,B_Y)}$ is reflexive for all reflexive $Y\in \B$ \cite[Lemma 5.18]{Do-Book}, and
        \item $(Y,z)\in \B\times C(\Delta)\mapsto \|z\|_{(Y,B_Y),m}\in \R$ is Borel for all $m\in\N$ \cite[Claim 5.23]{Do-Book}.
\end{enumerate}

However, the  proofs of the properties above actually give us something stronger -- in a nutshell,  the unit ball $B_Y$ can be replaced by any closed bounded convex symmetric subset of $Y$. Precisely, the assignment \eqref{EqBossardMap} has the following stronger properties:   
\begin{enumerate} 
\item[2'.]\label{ItemContain} $W\subset W_{(Y,W)}$ for all $(Y,W)\in   \B\times \cF_0(C(\Delta))$,
    \item [3'.]\label{ItemSS} $(z^{(Y,W)}_n)_n$ is a monotone shrinking basis for $Z_{(Y,W)}$ for all $(Y,W)\in   \B\times \cF_0(C(\Delta))$,
 and
    \item [4'.]  $Z_{(Y,W)}$ is reflexive if  $W$ is weakly compact.
\end{enumerate}
Moreover, if $S$ is a standard Borel space and    $\varphi:S\to \B\times \cF_0(C(\Delta))$ is Borel, then analogous arguments as the ones in \cite[Claim 5.22 and Claim 5.23]{BeanlandFreeman2014} give us that 
\begin{enumerate}
     \item[5'.]\label{ItemBorelnorm} $(s,z)\in S\times C(\Delta)\mapsto \|z\|_{\varphi(s),m}\in \R$ is Borel for all $m\in\N$.
\end{enumerate}
The references for the stronger properties above are precisely the same as the references for the validity of their weaker versions. By 5', after $\|\cdot\|_{\varphi(s)}$-normalizing, we can replace 3' by
\begin{enumerate}
    \item [3''.]  $(z^{\varphi(s)}_n)_n$ is a normalized monotone shrinking basis for $Z_{\varphi(s)}$ for all $s\in S$.
\end{enumerate}

\subsection{Factoring a Borel $\cB\subset \cW_{\SB,\SD}$ through a family of reflexive spaces with bases.}

Let $\cB\subset \cW_{\SB,\SD}$ be a Borel subset. By Lusin's separation theorem, there exists a Borel $\B\subset \SD$ containing all the codomains of the operators in $\cB$. Let 
\[ Y\in \B\mapsto (e^Y_n)_n\in C(\Delta)^\N \]
be the map \eqref{EqBossardMap} above associated to $\B$. Moreover, consider  the objects in Construction \ref{Construction} associated  to $C(\Delta)$, the  assignment $Y\in   \B \mapsto (e^{Y}_n)_n\in C(\Delta)^\N$ and $1,y\in C(\Delta)$, where $y$ is a normalized function which separates the points of $\Delta$. In order to simplify notation, for each $A=(X,Y,\hat A)\in \cB$, let 
\[Z_A=Z_{(Y,\overline{A(B_X)})}, \ \|\cdot\|_A=\|\cdot\|_{(Y,\overline{A(B_X)})}\ \text{ and } W_A=W_{(Y,\overline{A(B_X)})}.\]
Since  the map \[(X,Y,\hat A)\in \cB\mapsto (Y,\overline{A(B_X)})\in \B\times \cF_0(C(\Delta))\]
is Borel,  we have that  $(A,z)\in \cB\times C(\Delta)\mapsto \|z\|_{A,m}\in\R$ is Borel for all $m\in\N$. Therefore,   $(A,z)\in \cB\times C(\Delta)\mapsto \|z\|_{A}$ must be Borel.

We now code the unit balls of the dual spaces $Z_A^*$ -- this will be done slightly differently than in Section \ref{SectionDUAL}. For this different coding, we fix an enumeration $(\alpha_k)_k$ of $\Q^{<\N}$ and write $\alpha_k\times (e^Y_n)$ in order to abbreviate $\sum_{j=1}^ma_je^Y_j$, where $\alpha_k=(a_1,\ldots, a_m)$. 

Given $A=(X,Y,\hat A)\in \cB$, let 
\[K_A=\Big\{w^*\in B_{\ell_\infty}:\exists f\in B_{Z_A^*},\ \forall k\in\N,\  w^*_k=\frac{f(\alpha_k\times (z^Y_n))}{\|\alpha_k\times (z^Y_n)\|_A}\Big\}.\]
So $K_A\in \cK(B_{\ell_\infty})$. Since $(\alpha_k\times (z^Y_n))_k$ is dense in $Z_A$,  $K_A$ works as a coding for the unit ball of $Z_A^*$ and it is easy to check that $K_A$ is isometric to $B_{Z_A^*}$.\footnote{This follows just as in Section \ref{SectionDUAL} for $K_{X^*}$ and $B_{X^*}$.} Define a subset $\cD\subset \cB\times [-1,1]^\N$ by letting 
\[(X,Y,\hat A,w^*)\in \cD\ \Leftrightarrow \ w^*\in K_A.\]
Since $Y\in \B\mapsto (e^Y_n)_n\in C(\Delta)^\N$ and  $(A,z)\in \cB\times C(\Delta)\mapsto \|z\|_{A}$ are Borel, it follows that $Y\in \B\mapsto\|\alpha_k\times (z^Y_n)\|_A\in \R$ is also Borel. Hence, $\cD$ is Borel.\footnote{This follows completely   analogously as the proof that  $\D$ is Borel in   \cite[Page 2430]{Braga2015Fourier}.}

\begin{lemma}\label{LemmaAssignAtoKABorel}
Let $\cB\subset \cW_{\SB,\SD}$ be Borel. The map
\[A=(X,Y,\hat A)\in \cB\mapsto K_A\in \cK(B_{\ell_\infty})\]
is Borel. Moreover, for all $A\in \cB$ there exists an isometry $i_A:K_{A}\to B_{Z_A^*}$ such that if $f=i_A(w^*)$ then $w^*_k=f(\alpha_k\times (z^Y_n))/\|\alpha_k\times (z^Y_n)\|_A$ for all $k\in\N$.
\end{lemma}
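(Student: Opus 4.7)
The plan is to mimic, in the operator-parametrized setting, the construction from Section~3 that established Borel measurability of $X \in \SB \mapsto K_{X^*} \in \cK([-1,1]^\N)$. Two things need to be verified: Borel measurability of $A \mapsto K_A$, and the existence of an isometry $i_A$ with the stated intertwining property.

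For Borel measurability, I would fix $A=(X,Y,\hat A) \in \cB$ and consider the evaluation map
\[
\rho_A \colon B_{Z_A^*} \to [-1,1]^\N, \qquad \rho_A(f) = \Bigl(\frac{f(\alpha_k \times (z^Y_n))}{\|\alpha_k \times (z^Y_n)\|_A}\Bigr)_k.
\]
Each coordinate is a weak$^*$-continuous scalar functional, so $\rho_A$ is continuous from $(B_{Z_A^*},\mathrm{w}^*)$ to the product space, and hence $K_A = \rho_A(B_{Z_A^*})$ is product-compact as the continuous image of the Banach--Alaoglu ball. Combining this with the Borelness of the set $\cD$ noted immediately before the lemma and invoking \cite[Theorem 28.8]{Ke-book}---exactly as was done in Section~3 for $X \mapsto K_{X^*}$---one concludes that $A \in \cB \mapsto K_A \in \cK(B_{\ell_\infty})$ is Borel.

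For the isometry, I would set $i_A := \rho_A^{-1}$ and verify that $\rho_A$ is itself an isometric bijection from $(B_{Z_A^*}, \|\cdot\|)$ onto $(K_A, \|\cdot\|_\infty)$. The key input is that $\{\alpha_k \times (z^Y_n) : k \in \N\}$ is the set of all rational linear combinations of the Schauder basis of $Z_A$, hence dense in $(Z_A, \|\cdot\|_A)$ by property~$3''$ recalled earlier in this section. This density yields injectivity of $\rho_A$. For norm preservation, $\|\rho_A(f)\|_\infty \leq \|f\|_{Z_A^*}$ is immediate; for the reverse inequality I would observe that a density-plus-renormalization argument makes $\{\alpha_k \times (z^Y_n)/\|\alpha_k \times (z^Y_n)\|_A\}$ dense in the unit sphere $S_{Z_A}$, so
\[
\|f\|_{Z_A^*} = \sup_k \bigl|f\bigl(\alpha_k \times (z^Y_n)/\|\alpha_k \times (z^Y_n)\|_A\bigr)\bigr| = \|\rho_A(f)\|_\infty.
\]
Linearity of $\rho_A$ upgrades this to $\|\rho_A(f)-\rho_A(g)\|_\infty = \|f-g\|_{Z_A^*}$, so $\rho_A$ is an isometry, and the prescribed identity \mbox{$w^*_k = i_A(w^*)(\alpha_k \times (z^Y_n))/\|\alpha_k \times (z^Y_n)\|_A$} holds by the very definition of $\rho_A$.

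I do not anticipate a conceptual obstacle: the entire argument is a transcription of the Section~3 construction into the parametrized setting. The one point that will require care is bookkeeping: $Z_A$ is an interpolation space whose norm $\|\cdot\|_A$ differs from the ambient $C(\Delta)$-norm, so the density statements being used must be density in $(Z_A, \|\cdot\|_A)$ rather than in $C(\Delta)$. Since property~$3''$ guarantees that the $(z^Y_n)_n$ form a Schauder basis for $(Z_A,\|\cdot\|_A)$ and rational scalars are dense in $\R$, this causes no real difficulty.
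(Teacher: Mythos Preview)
Your proposal is correct and follows exactly the approach the paper takes: the paper's own proof is a two-line remark invoking \cite[Theorem 28.8]{Ke-book} together with the Borelness of $\cD$ for the first statement, and declaring the isometry ``a trivial consequence of the interpretation of $K_A$ as a coding for $B_{Z_A^*}$'' for the second. You have simply unpacked both of these remarks in full detail, and your bookkeeping caveat about working in the $\|\cdot\|_A$-topology is the only subtlety worth flagging.
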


\begin{proof}
The first statement is a simple consequence of \cite[Theorem 28.8]{Ke-book} and the second statement is a trivial consequence of the interpretation of $K_{A}$ as a coding for $ B_{Z_A^*}$.
\end{proof}

Since $K_A$ is a coding for the unit ball of $Z^*_A$, we must have a Borel way to realize the duality between $Z_A$ and $K_A$. The next two lemmas take care of this task.

\begin{lemma}\label{LemmaAlpha}
Let $\cB\subset \cW_{\SB,\SD}$ be Borel and let $i_A$ be as in Lemma \ref{LemmaAssignAtoKABorel}.  Let \[\cA=\{(X,Y,\hat A,k,w^*)\in \cB\times \N\times[-1,1]^\N: w^*\in K_A\}\] and define a map $\alpha:\cA\to \R$ by 
\[\alpha(X,Y,\hat A,k,w^*)=\langle i_A(w^*),\alpha_k\times (z^Y_n)\rangle\]
for all $(X,Y,\hat A,k,w^*)\in \cA$. Then $\cA$ is a Borel set and $\alpha$ is a Borel map.
\end{lemma}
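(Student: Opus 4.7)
The proof plan is quite short: both statements follow essentially by unwinding the definitions and invoking earlier Borelness facts. I would proceed as follows.

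For the first statement, observe that $\cA$ is nothing but $\cD\times \N$ rearranged: precisely,
\[\cA=\{(X,Y,\hat A,k,w^*)\in \cB\times \N\times [-1,1]^\N:(X,Y,\hat A,w^*)\in \cD\},\]
so $\cA$ is the preimage of the Borel set $\cD$ under the coordinate projection $(X,Y,\hat A,k,w^*)\mapsto (X,Y,\hat A,w^*)$, which is continuous. Hence $\cA$ is Borel.

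For the map $\alpha$, the key point is that the isometry $i_A$ from Lemma \ref{LemmaAssignAtoKABorel} has a completely explicit defining property: if $f=i_A(w^*)$, then for every $k\in\N$,
\[f(\alpha_k\times(z^Y_n))=w^*_k\cdot\|\alpha_k\times(z^Y_n)\|_A\]
(taking both sides to be $0$ when the denominator vanishes). Therefore
\[\alpha(X,Y,\hat A,k,w^*)=w^*_k\cdot\|\alpha_k\times(z^Y_n)\|_A,\]
and Borelness reduces to a product of two Borel functions.

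The factor $(X,Y,\hat A,k,w^*)\mapsto w^*_k$ is trivially continuous (a projection on $[-1,1]^\N$). For the other factor, recall that Bossard's assignment $Y\in\B\mapsto(e^Y_n)_n\in C(\Delta)^\N$ is Borel and that the enumeration $(\alpha_k)_k$ of $\Q^{<\N}$ is fixed, so $(Y,k)\mapsto \alpha_k\times(z^Y_n)\in C(\Delta)$ is Borel (it is a fixed finite rational combination of Borel-parameterised vectors). Combining this with the Borelness of $(A,z)\in\cB\times C(\Delta)\mapsto \|z\|_A\in\R$ — noted just before the statement of the lemma as a consequence of property 5' of the construction — the composition
\[(X,Y,\hat A,k)\mapsto \|\alpha_k\times(z^Y_n)\|_A\]
is Borel. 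Multiplying by the Borel map $(X,Y,\hat A,k,w^*)\mapsto w^*_k$ yields that $\alpha$ is Borel, as desired.

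Since the proof is purely a bookkeeping exercise, no part of it presents a serious obstacle; the only mild subtlety is verifying that the formula for $f(\alpha_k\times(z^Y_n))$ remains valid when $\|\alpha_k\times(z^Y_n)\|_A=0$, which is immediate because then the vector itself is zero and both sides vanish.
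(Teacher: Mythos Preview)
Your proof is correct and follows essentially the same approach as the paper's: both observe that $\cA$ is Borel via the previously established Borelness of $\cD$ (equivalently, of $A\mapsto K_A$), and both reduce the Borelness of $\alpha$ to the explicit formula $\alpha(X,Y,\hat A,k,w^*)=w^*_k\cdot\|\alpha_k\times(z^Y_n)\|_A$, invoking the Borelness of $(A,z)\mapsto\|z\|_A$ recorded just before the lemma. Your write-up is simply more detailed than the paper's two-line argument.
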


\begin{proof}
Since the assignment $A\in \cB\to K_A\in \cK([-1,1]^\N)$ is Borel by Lemma \ref{LemmaAssignAtoKABorel}, it follows that $\cA$ is Borel. Also, since for all $(X,Y,\hat A,n,w^*)\in \cA$ we have that 
\[\alpha(X,Y,\hat A,k , w^* )=w^*_n\|\alpha_k\times (z^Y_n)\|_A,\] the map $\alpha$ is clearly Borel.
\end{proof}

Given $A=(X,Y,\hat A)\in \cB$, $Z_A$ is defined as the interpolation space of the pair $(C(\Delta),W_A)$, and we can consider the standard inclusion $J_A:Z_A\to C(\Delta)$. Moreover, by 2' above, we have that  $A(B_X)\subset W_A\subset B_{Z_A}$. So,  the map $j_A:X\to Z_A$ given by $j_A(x)=J^{-1}_A(A(x))$ is well-defined and  has norm at most 1.

\begin{lemma}\label{LemmaAlphaPrime}
Let $\cB\subset \cW_{\SB,\SD}$ be Borel,  $i_A$ be as in Lemma  \ref{LemmaAssignAtoKABorel} and $j_A$ be as above.  Let \[\cA'=\{(X,Y,\hat A,x,w^*)\in \cB\times C(\Delta)\times [-1,1]^\N:x\in X,\ w^*\in [-1,1]^\N\}\] and define a map $\alpha':\cA'\to \R$ by 
\[\alpha'(X,Y,\hat A,x,w^*)=\langle i_A(w^*),j_A(x)\rangle\]
for all $(X,Y,\hat A,x,w^*)\in \cA'$. Then $\cA'$ is a Borel set and $\alpha'$ is a Borel map.
\end{lemma}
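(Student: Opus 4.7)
The plan is to adapt the argument of Claim \ref{ClaimMapABulletgBorel}: approximate $j_A(x)$ in the $Z_A$-norm by rational linear combinations of the basis $(z_n^Y)$ and then transfer the evaluation of $i_A(w^*)$ to the already-Borel map $\alpha$ of Lemma \ref{LemmaAlpha}.

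Borelness of $\cA'$ is immediate from the standard fact that $\{(X,x)\in \SB\times C(\Delta): x\in X\}$ is Borel together with (reading the last clause of the displayed definition as $w^*\in K_A$) the Borelness of the set $\cD$ recorded just before Lemma \ref{LemmaAssignAtoKABorel}.

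For the Borelness of $\alpha'$, I would first record two geometric facts. By property 2', we have $A(B_X)\subset W_A\subset B_{Z_A}$, so $j_A:X\to Z_A$ is well-defined with $\|j_A\|\le 1$. Writing $J_A:Z_A\hookrightarrow C(\Delta)$ for the standard inclusion coming from the DFJP construction, item (7) of Construction \ref{Construction} together with our choice $y_1=1$, $y_2=y$ yields $J_A(z_n^Y)=e_n^Y$, so for every $k\in\N$ and every $(A,x)\in \cA'$,
\[\|j_A(x)-\alpha_k\times(z_n^Y)\|_A=\|A(x)-\alpha_k\times(e_n^Y)\|_A,\]
where on the right we compute the $Z_A$-norm of an element living in $C(\Delta)$. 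Since $(z_n^Y)_n$ is a Schauder basis of $Z_A$, the family $\{\alpha_k\times(z_n^Y)\}_k$ is dense in $Z_A$, and $\|i_A(w^*)\|_{Z_A^*}\le 1$; from this I would derive, just as in \eqref{EqIFF}, the equivalence
\[|\alpha'(A,x,w^*)-r|\le\eps\ \Leftrightarrow\ \forall \delta\in\Q_{>0}\,\exists k\in\N:\ \|A(x)-\alpha_k\times(e_n^Y)\|_A\le\delta\ \wedge\ |\alpha(A,k,w^*)-r|\le\eps+\delta.\]

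The right-hand side is then a Borel condition on $(A,x,w^*)$, and this is what gives the result. Indeed, property 5' gives Borelness of $(A,z)\mapsto \|z\|_{A,m}$ for each $m$, and hence of $(A,z)\mapsto \|z\|_A=(\sum_m \|z\|_{A,m}^2)^{1/2}$ as a pointwise limit of Borel functions; the assignments $A\mapsto e_n^Y$ are Borel by the Bossard map \eqref{EqBossardMap}; the evaluation $(A,x)\mapsto A(x)$ is Borel by the definition of the coding $\cL$; and $\alpha$ is Borel by Lemma \ref{LemmaAlpha}. The only delicate point I would work out carefully is the identification $J_A(z_n^Y)=e_n^Y$, since it is this identification that allows the $Z_A$-norm computation on the left-hand side to be replaced by a Borel norm computation on a Borel element of $C(\Delta)$; once that is in hand, the rest is a routine assembly of Borel maps.
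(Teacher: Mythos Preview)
Your proposal is correct and follows essentially the same approach as the paper's proof: both approximate $j_A(x)$ in the $Z_A$-norm by rational combinations $\alpha_k\times(z_n^Y)$ and reduce the computation of $\alpha'$ to the already-established Borelness of $\alpha$ from Lemma \ref{LemmaAlpha}. Your write-up is in fact slightly more explicit than the paper's, since you spell out the identification $J_A(z_n^Y)=e_n^Y$ that makes the condition $\|j_A(x)-\alpha_k\times(z_n^Y)\|_A<\eps$ equal to $\|A(x)-\alpha_k\times(e_n^Y)\|_A<\eps$, thereby exhibiting it as a Borel condition via the Borel map $(A,z)\mapsto\|z\|_A$ on $\cB\times C(\Delta)$; the paper leaves this step implicit.
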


\begin{proof}
Notice that for each $(X,Y,\hat A,x,w^*)\in \A'$, 
\[\alpha'(X,Y,\hat A,x,w^*)=\lim_jw^*_{n_j}\|\alpha_{n_j}\times (z^Y_n)\|_A,\]
where $(n_j)_j$ is any sequence in  $\N$ so that $\alpha_{n_j}\times (z^Y_n) \to j_A(x)$. Therefore, 
\begin{align*}
    \{(X,Y,&\hat A,x,w^*)\in  \cA': \alpha'(X,Y,\hat A,x,w^*)\in (a,b)\}\\
    & =\bigcup_{\delta\in \Q_{>0}}\bigcap_{\eps\in \Q_{>0}}\bigcup_{k\in\N}\{(X,Y,\hat A,x,w^*)\in  \cA': \|j_A(x)-\alpha_k \times (z^Y_j)\|_A<\eps,\ \\
    &\ \ \ \ \ \ \ \ \ \ \ \ \ \ \ \ \ \ \ \ \ \ \ \ \ \   \ \ \ \ \ \ \ \ \ \ \ \ \ \ \  \ \ \ \    \alpha(X,Y,\hat A,k,w^*)\in (a+\delta,b-\delta)\}
\end{align*}
where $\alpha$ is given by Lemma \ref{LemmaAlpha}. This shows that  $\alpha'$ is Borel.
\end{proof}

We now prove the main result of this subsection.

%{\color{red} 
%This is what I was thinking: Let $\cW_{SB,b}$ where $b$ is the subset of $SB$ of spaces with Schauder bases. For an operator $(X,Y,A) \in \cW_{SB,b}$ let $Z_A$ be the (interpolation) space determined by the the set $W_A$ from Lemma 4.1 (the closure of the convex hull after unioning the projections onto the basis). The space $Z_A$ will only be reflexive when $W_A$ is weakly compact and this only happens in certain cases (as we have observed). However, (I think) the space $Z_A$ will still have a basis $(z_n^A)$ which are just the preimages of the basis of the range and $A$ will still factor through $Z_A$.

%I wonder is the follow theorem holds.

%\begin{theorem*}
%Let $\mathcal{B} \subset \cW_{SB,b}$. Then there is a Borel map $\Xi: \mathcal{B} \to SB$ so that $\Xi(A)$ is isometric to $Z_A$.  
%\end{theorem*}

%Consequently, each $A$ factors through $\Xi(A)$ and $\Xi(A)$ has a basis. In addition, $\{\Xi(A): A\in \mathcal{B}\}$ is analytic. 
%In the case that we can assume $\Xi(A)$ is reflexive (i.e. when $W_A$ is weakly compact)  the Argyros-Dodos result for analytic collections of spaces with bases yields a separable reflexive space $Z_\mathcal{B}$ so that each $\Xi(A)$ ($A \in \mathcal{B}$) embedds complementably in $Z_\mathcal{B}$. Therefore, each $A \in \mathcal{B}$ factors through $Z_\mathcal{B}$. Of course this is the case when  $\mathcal{B}$ is a subset of $\cW_{SB,sb}$, $\cW_{SB,C(\Delta)}$, $\cW_{SB,L_1}$ or $\cW_{SB,u}$.

%}

\begin{theorem}\label{ThmFactoringBasis}
Let $\cB\subset \cW_{\SD}$ be a Borel set, and let 
\[\cE=\{(X,Y,\hat A, x)\in \cB\times C(\Delta):x\in X\}.\]
There are Borel maps 
\[\sigma:\cB\to C(\Delta)^\N\ \text{ and }\ \varphi:\cE\to C(\Delta)\]
such that, by setting $\varphi_A=(X,Y,\hat A,\cdot)$, we have that, for each $A=(X,Y,\hat A)\in \cB$,
\begin{enumerate}
    \item $\sigma(A)$ is a shrinking boundedly complete basic sequence, 
    \item $\mathrm{Im}(\varphi_A)\subset\overline{\text{span}}\{\sigma(A)\}$ and $\varphi_A:X\to \overline{\text{span}}\{\sigma(A)\}$ is a linear operator with norm at most $1$, and
    \item there exists a bounded  operator $L:\mathrm{Im}(\varphi_A)\to Y$ so that $A=L\circ \varphi_A$. Moreover, $\|L\|\leq \sup_{A\in \cB}\|J_A\|$.
\end{enumerate}
\end{theorem}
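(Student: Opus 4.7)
The approach is to Borel-realize each interpolation space $Z_A$ as a concrete closed subspace of $C(\Delta)$ by means of its coded dual ball $K_A$, and then to transfer both the normalized monotone shrinking basis $(z_n^A)_n$ of $Z_A$ and the canonical map $j_A: X \to Z_A$ (with $j_A(x) = J_A^{-1}(A(x))$) across this embedding. The geometric fact being exploited is that, since $K_A$ isometrically represents $B_{Z_A^*}$ via $i_A$ (Lemma \ref{LemmaAssignAtoKABorel}), the map $\iota_A: z \mapsto (w^* \mapsto \langle i_A(w^*), z\rangle)$ isometrically embeds $Z_A \hookrightarrow C(K_A)$; composing with the pullback along any continuous surjection $q: \Delta \twoheadrightarrow K_A$ then gives an isometric embedding $Z_A \hookrightarrow C(\Delta)$. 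The entire point is that such a surjection $q$ can be chosen Borel in $A$.

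Concretely, I would first apply Lemma \ref{LemmaH} with $M = [-1,1]^\N$ to obtain a Borel map $K \mapsto q_K \in C(\Delta, [-1,1]^\N)$ such that $q_K$ maps $\Delta$ onto $K$; composing with the Borel assignment $A \mapsto K_A$ of Lemma \ref{LemmaAssignAtoKABorel} yields a Borel map $A \mapsto q_{K_A}$. Fixing indices $(k_n)_n \subset \N$ so that each $\alpha_{k_n}$ is the $n$-th standard unit vector in $\Q^{<\N}$ (so that $\alpha_{k_n} \times (z_j^A) = z_n^A$), I would then define
\begin{align*}
\sigma(A)_n(t) &= \alpha(A, k_n, q_{K_A}(t)), \\
\varphi(A, x)(t) &= \alpha'(A, x, q_{K_A}(t)),
\end{align*}
using the Borel pairings of Lemmas \ref{LemmaAlpha} and \ref{LemmaAlphaPrime}. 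Continuity of $t \mapsto \sigma(A)_n(t)$ and $t \mapsto \varphi(A,x)(t)$ follows from continuity of $q_{K_A}$ together with weak-$*$ continuity of the pairings, while joint Borel-ness of $\sigma$ and $\varphi$ follows from the Borel character of the pairings and of $A \mapsto q_{K_A}$.

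For the conclusions: by construction $\sigma(A)_n = \iota_A(z_n^A)$ and $\varphi_A = \iota_A \circ j_A$, where $\iota_A: Z_A \hookrightarrow C(\Delta)$ is an isometry since $q_{K_A}(\Delta) = K_A$ and $K_A$ represents $B_{Z_A^*}$ isometrically. Since $(z_n^A)_n$ is a normalized monotone shrinking basis of $Z_A$ and $Z_A$ is reflexive (by property $4'$, since $A \in \cW$ makes $W_A$ weakly compact), the transported sequence $(\sigma(A)_n)_n$ is a normalized monotone shrinking boundedly complete basic sequence, giving (1). For (2), $\iota_A$ being isometric yields $\|\varphi_A(x)\|_\infty = \|j_A(x)\|_{Z_A} \le \|x\|_X$ (using $A(B_X) \subset W_A \subset B_{Z_A}$), and $\mathrm{Im}(\varphi_A) = \iota_A(j_A(X)) \subset \iota_A(Z_A) = \overline{\mathrm{span}}\{\sigma(A)_n\}$. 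For (3), define $L: \mathrm{Im}(\varphi_A) \to Y$ by $L(\iota_A(j_A(x))) = A(x) = J_A(j_A(x))$; this is well-defined and linear because $\ker(\varphi_A) = \ker(j_A) = \ker(A)$, the values land in $Y$ because $A(X) \subset Y$, and $\|L\| \le \|J_A\|$ follows from $\iota_A$ being isometric, giving the desired uniform bound $\|L\| \le \sup_{A \in \cB} \|J_A\|$.

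The main obstacle I expect is checking that the formal identifications line up unambiguously — in particular, that $\alpha(A, k_n, w^*)$ as defined in Lemma \ref{LemmaAlpha} literally equals $\langle i_A(w^*), z_n^A\rangle$ (so that $\sigma(A)_n$ really is $\iota_A(z_n^A)$), and that $\alpha'(A, x, w^*)$ extends this pairing continuously from $\overline{\mathrm{span}}\{\alpha_k \times (z_j^A)\}$ to all of $j_A(X)$. These identifications are essentially built into the definitions of $\alpha$ and $\alpha'$, but tracing the Borel bookkeeping, particularly the Borel dependence of $z_n^A$ on $A$ through Bossard's parametrization \eqref{EqBossardMap} and through the normalization step in property $3''$, requires some care.
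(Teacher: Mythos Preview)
Your proposal is correct and follows essentially the same approach as the paper: the paper likewise uses Lemma \ref{LemmaH} to obtain the Borel surjection $H(K_A)\colon\Delta\to K_A$, defines $\sigma(A)=(\alpha(A,n_k,H(K_A)(\cdot)))_k$ and $\varphi(A,x)=\alpha'(A,x,H(K_A)(\cdot))$, verifies that the resulting isometry $I_A\colon Z_A\to\overline{\Span}\{\sigma(A)\}$ sends $(z_n^A)_n$ to $\sigma(A)$, and concludes with $A=(J_A\circ I_A^{-1})\circ\varphi_A$. Your anticipated ``obstacle'' about the identifications is exactly what the paper dispatches in its brief internal claim that $\sigma(A)$ is $1$-equivalent to $(z_n^Y)_n$.
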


\begin{proof}
We follow the proof of \cite[Theorem 4.6]{Braga2015Fourier} closely. Let $H:\cK([-1,1]^\N)\to C(\Delta,[-1,1]^\N)$ be the map given in Lemma \ref{LemmaH}, and $\alpha$ and $\alpha'$ be the maps in Lemma \ref{LemmaAlpha} and Lemma \ref{LemmaAlphaPrime}, respectively. Fix a sequence $(n_k)_k$ in $\N^\N$ so that $\alpha_{n_k}\times (e^Y_j)=e^Y_k$ for all $k\in\N$ -- notice that this sequence does not depend on $Y$. For each $A=(X,Y,\hat A)\in \cB$, we define 
\[\sigma(A)=\Big(\alpha\big(X,Y,\hat A, n_k, H(K_A)(\cdot)\big)\Big)_k.\]
Since $A\in \cB\mapsto K_A\in \cK([-1,1]^\N)$ is Borel (Lemma \ref{LemmaAssignAtoKABorel}), $\sigma$ is clearly Borel.

\begin{claim}
The sequence $\sigma(A)$ is $1$-equivalent to $(z^Y_n)_n$ for all  $A\in \cB$. In particular $\sigma(A)$ is a shrinking  boundedly complete basic sequence.
\end{claim}

\begin{proof}
One only needs to notice that the assignment
\[\alpha_k\times (z^Y_n)\in Z_A\mapsto \alpha\big(X,Y,\hat A,k,H(K_A)(\cdot)\big)\in \overline{\Span}\{\sigma(A)\}\]
defines a surjective linear  isometry $Z_A\to \overline{\Span}\{\sigma(A)\}$. Since this follows exactly as in the proof of Theorem 4.6, we leave the details to the reader. 
\end{proof}
 
Define $\varphi:\cE\to C(\Delta)$ by letting
\[\varphi(X,Y,\hat A, x)=\alpha'\big(X,Y,\hat A, x, H(K_A)(\cdot)\big),\]
for all $A=(X,Y,\hat A,x)\in \cE$. By Lemma \ref{LemmaAlphaPrime}, $\varphi $ is Borel. Given $A=(X,Y,\hat A)\in \cB$, the map $\varphi_A=\varphi(X,Y,\hat A, \cdot)$ is the composition of the map $j_A:X\to Z_A$ in Lemma \ref{LemmaAlphaPrime} with the isometry $I_A:Z_A\to \overline{\Span}\{\sigma(A)\}$, so $\mathrm{Im}(\varphi_A)\subset \overline{\Span}\{\sigma(A)\}$. Moreover, since $\|j_A\|\leq 1$, the linear operator $\varphi_A$ has norm at most $1$. At last, since $\varphi_A=I_A\circ j_A$, we have that $A=(J_A\circ I^{-1}_A)\circ \varphi_A$, so we are done.
\end{proof}

\begin{corollary}
Let $\cB\subset \cW_{\SB,\SD}$ be Borel. There exists a Borel map $\Psi:\cB\to \SB$ so that for all $A\in \cB$
\begin{enumerate}
\item $\Psi(A)$ is a reflexive space, and
\item $A$ factors through $\Psi(A)$.
\end{enumerate}
\end{corollary}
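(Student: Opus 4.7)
The plan is to define $\Psi(A)$ to be the closed linear span of the basic sequence $\sigma(A)$ produced by Theorem \ref{ThmFactoringBasis}, and then verify that this assignment has the three required properties.

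First I would set $\Psi(A) = \overline{\mathrm{span}}\{\sigma(A)(k) : k \in \N\} \subset C(\Delta)$, so that $\Psi(A) \in \SB$. Reflexivity of $\Psi(A)$ follows immediately from item (1) of Theorem \ref{ThmFactoringBasis}: since $\sigma(A)$ is simultaneously shrinking and boundedly complete, the classical theorem of James gives that $\overline{\mathrm{span}}\{\sigma(A)\}$ is reflexive. The factorization statement is also immediate from items (2) and (3) of Theorem \ref{ThmFactoringBasis}, since $\varphi_A$ maps $X$ into $\Psi(A)$ and there is a bounded operator $L: \mathrm{Im}(\varphi_A) \subset \Psi(A) \to Y$ with $A = L \circ \varphi_A$; extending $L$ boundedly to all of $\Psi(A)$ (for instance, by $0$ on a topological complement, or just restricting $L$ to its natural domain) yields the desired factorization $A = \tilde L \circ \varphi_A$ through $\Psi(A)$.

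The only substantive point to check is that $\Psi:\cB \to \SB$ is Borel. Recall that the Effros-Borel $\sigma$-algebra on $\SB$ is generated by the sets $\{F \in \SB : F \cap U \neq \emptyset\}$ for open $U \subset C(\Delta)$, so it suffices to show that for each open $U \subset C(\Delta)$ the set $\{A \in \cB : \Psi(A) \cap U \neq \emptyset\}$ is Borel in $\cB$. For this I would use the elementary observation that $\Psi(A)$ meets $U$ if and only if some rational linear combination of the vectors $\sigma(A)(1), \ldots, \sigma(A)(k)$ already lies in $U$. Thus
\[
\{A \in \cB : \Psi(A) \cap U \neq \emptyset\} = \bigcup_{k \in \N} \bigcup_{(q_1,\ldots,q_k) \in \Q^k} \Big\{A \in \cB : \sum_{i=1}^k q_i \, \sigma(A)(i) \in U\Big\}.
\]
Since $\sigma:\cB \to C(\Delta)^\N$ is Borel by Theorem \ref{ThmFactoringBasis}, each map $A \mapsto \sum_{i=1}^k q_i\, \sigma(A)(i) \in C(\Delta)$ is Borel, and therefore each set in the countable union above is Borel. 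Hence $\Psi$ is Borel.

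I do not expect any real obstacle here: the corollary is essentially a packaging step that reads off $\Psi(A)$ from the data produced by Theorem \ref{ThmFactoringBasis}, and the only genuine verification is the Borelness of the closed-span map, which is standard for the Effros-Borel structure. The slight subtlety is making sure that the factoring operator $L$ can be taken as a bounded operator defined on all of $\Psi(A)$ rather than just on $\mathrm{Im}(\varphi_A)$; but since $\sigma(A)$ is a Schauder basis of $\Psi(A)$ and $L$ is already bounded on the dense subspace $\mathrm{Im}(\varphi_A)$ (with a uniform bound coming from $\sup_{A \in \cB}\|J_A\|$), it extends uniquely by continuity, which is exactly what is needed.
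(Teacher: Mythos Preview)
Your choice $\Psi(A)=\overline{\mathrm{span}}\{\sigma(A)\}$ differs from the paper's, which sets $\Psi(A)=\overline{\{\varphi_A(d_n(X)):n\in\N\}}=\overline{\mathrm{Im}(\varphi_A)}$, i.e.\ the closure of the \emph{range} of the first factor rather than the whole span of the basis. Reflexivity and Borelness go through for either choice, but the factorization step does not go through for yours as written.

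The gap is the sentence ``$L$ is already bounded on the dense subspace $\mathrm{Im}(\varphi_A)$'': there is no reason for $\mathrm{Im}(\varphi_A)$ to be dense in $\overline{\mathrm{span}}\{\sigma(A)\}$. In the proof of Theorem~\ref{ThmFactoringBasis} one has $\varphi_A=I_A\circ j_A$ with $I_A:Z_A\to\overline{\mathrm{span}}\{\sigma(A)\}$ a surjective isometry and $j_A(x)=J_A^{-1}(A(x))$; hence $\mathrm{Im}(\varphi_A)=I_A\big(J_A^{-1}(A(X))\big)$, which is typically a proper (non-dense) subspace. The operator $J_A\circ I_A^{-1}$ \emph{is} globally defined on $\overline{\mathrm{span}}\{\sigma(A)\}$, but it takes values in $C(\Delta)$, not in $Y$: the interpolation is $Z_A=\Delta_2(C(\Delta),W_A)$ with $W_A$ built from projections $P_m^Y$ onto spans of a basis of $C(\Delta)$ and from the auxiliary vectors $y_1,y_2\in C(\Delta)$, so $J_A(Z_A)\not\subset Y$ in general. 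Your alternative ``extend by $0$ on a topological complement'' is also unjustified, since closed subspaces of reflexive spaces need not be complemented.

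The paper's definition sidesteps all of this: with $\Psi(A)=\overline{\mathrm{Im}(\varphi_A)}$, the bounded operator $L:\mathrm{Im}(\varphi_A)\to Y$ extends by continuity to its closure (now genuinely dense), $\Psi(A)$ is still reflexive as a closed subspace of $\overline{\mathrm{span}}\{\sigma(A)\}$, and Borelness follows from Borelness of $\varphi$ together with the selectors $d_n$. Your Effros--Borel verification is correct and would work equally well for the paper's map, but the quickest fix to your argument is simply to replace $\overline{\mathrm{span}}\{\sigma(A)\}$ by $\overline{\mathrm{Im}(\varphi_A)}$.
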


\begin{proof}
Let $\cE$ and $\varphi:\cB\to C(\Delta)^\N$ be given by Theorem \ref{ThmFactoringBasis} and define $\Psi:\cB\to \SB$ by letting $\Psi(A)=\overline{\{\varphi(X,Y,\hat A,d_n(X)):n\in\N\}}$ for all $A=(X,Y,\hat A)\in \cB$. Since $\varphi$ is Borel, so is $\Psi$, and the other properties follows from Theorem \ref{ThmFactoringBasis}.  
\end{proof}
\subsection{Coding by rational spaces and amalgamation}\label{SubsecRationalAmalg}

We have seen in the previous subsection how to construct a Borel assignment $\sigma:\cB\to C(\Delta)^\N$ so that every $A\in \cB$ factors through a subspace of $\sigma(A)$. In this subsection, we code each $\sigma(A)$ as a subspace of a rational Banach space, and use the amalgamation method presented in \cite{Kurka-StudiaAmal} in order to construct a single reflexive space containing every $\sigma(A)$.

\begin{definition}
Let $d\in\N$.  A norm $\|\cdot\|_d$ on $\R$ is called \emph{rational} if its unit ball is the convex hull of  finitely many points whose coordinates in the standard basis of $\R^d$ are all rational. 
\end{definition}

Given $\bar n= (n_j)_j\in \N^{<\N}$, let us define a norm $\|\cdot\|_{\bar n}$ on $\R^{|\bar n|}$, where $|\bar n|$ denotes the length of the tuple $\bar n$. For that, for each $d\in\N$, fix an enumeration $(\|\cdot\|_{d,j})_j$  of all monotone rational norms on $\R^d$. Then, given $\bar n=(n_j)_j\in \N^{<\N}$, define a norm $\|\cdot\|_{\bar n}$ on $\R^{|\bar n|}$ by letting the unit ball of $(\R^{|\bar n|},\|\cdot\|_{\bar n})$ be
\[B_{(\R^{\bar n},\|\cdot\|_{\bar n})}=\overline{\mathrm{conv}}\Big(\bigcup_{j=1}^{|\bar n|}B_{(\R^j, \|\cdot\|_{j,n_j})}\Big).\]
If $\bar n=(n_j)_j\in \N^\N$, we define a norm $\|\cdot\|_{\bar n}$ on a subspace of $\R^\N$ analogously. To simplify notation, we denote the Banach spaces just defined by $(F_{\bar n},\|\cdot\|_{\bar n})$  regardless of  $\bar n$ being a finite tuple or not. For each such $\bar n$, we denote the standard basis of $F_{\bar n}$ by $(r_n)_n$.

Let \[\mathrm{mbs}=\{\bar f\in C(\Delta)^\N: \bar f  \text{ is a normalized monotone basic sequence}\},\] so  $\mathrm{mbs}$ is a Borel subset of $C(\Delta)^\N$. We will now define a map $\psi:\mathrm{mbs}\to \N^\N$ which was implicitly defined in  \cite[Section 6]{Kurka-StudiaAmal}.\footnote{This is the important step noticed by O. Kurka in \cite{Kurka-StudiaAmal} which allows us to obtain isometric statements.}

Fix a bijection $\pi=(\pi_1,\pi_2):\N\to \N\times \N$   so that both $\pi_1(m)\leq \pi_1(k)$ and $\pi_2(m)\leq \pi_2(k)$ imply $m\leq k$ (e.g., let $\pi$ be the bijection in \cite[Definition 6.3]{Kurka-StudiaAmal}). For each $\bar f=(f_n)_n\in \mathrm{mbs}$, let $(f_{j,n})_{j,n}$ denote the basis of   $\ell_2(\overline{\Span}\{\bar f\})$ so that $f_{j,n}$ belongs to the $j$-th copy of $\overline{\Span}\{\bar f\}$ in $ \ell_2(\overline{\Span}\{\bar f\})$ and equals $f_n$. For each $i\in\N$, let $g_i=f_{\pi(i)}$, so $(g_i)_i$ is a monotone basis for $\ell_2(\overline{\Span}\{\bar f\})$, for all $\bar f\in \mathrm{mbs}$.

We define  the map $\psi:\mathrm{mbs}\to \N^\N$ as follows. Given $\bar f\in \mathrm{mbs}$, and  $d\in \N$, let $n_d\in \N$ be the least natural number so that 
\begin{align*}
\Big(1-\frac{1}{2d+1}\Big)\Big\|\sum_{i=1}^d a_ig_i\Big\|_{\ell_2(\overline{\Span}\{\bar f\})}&\leq \Big\|\sum_{i=1}^da_ir_i\Big\|_{d,n_d}\\
&\leq \Big(1-\frac{1}{2d+2}\Big)\Big\|\sum_{i=1}^da_ig_i\Big\|_{\ell_2(\overline{\Span}\{\bar f\})}
\end{align*}
for all $(a_i)_i\in \R^d$. Such $n_d$ exists since $(g_i)_i$ is a  monotone basis of $\ell_2(\overline{\Span}\{\bar f\})$. Set $\psi(\bar f)=(n_d)_d$, this map is clearly Borel.

The assignment $g_i\mapsto r_i$ defines an isomorphism between   $\ell_2(\overline{\Span}\{\bar f\})$ and $F_{\psi(\bar f)}$ for all $\bar f\in \mathrm{mbs}$. For each $j\in\N$ and $\bar f\in \mathrm{mbs}$, we denote the canonical inclusion of $\overline{\Span}\{\bar f\}$ onto its $j$-th  copy in $F_{\psi(\bar f)}$ by $U_{\bar f,m}$. Precisely,  $U_{\bar f,m}:\overline{\Span}\{\bar f\}\to F_{\psi(\bar f)}$ is the   linear map given by 
\[U_{\bar f,j}(f_n)=r_{\pi^{-1}(j, n)}\]
for all $n\in\N$.

\begin{lemma}\label{LemmaRational}
Let  $\psi:\mathrm{mbs}\to \N^\N$ be the Borel map defined above. Then, for all $\bar f\in \mathrm{mbs}$,
\begin{enumerate}
    \item $F_{\psi(\bar f)}$ is isomorphic to $\ell_2(\overline{\Span}\{\bar f\})$,
    \item if $\bar f$ is shrinking, then $(r_n)_n$ is a shrinking basis for $F_{\psi(\bar f)}$, and
    \item the map $U_{\bar f}:\overline{\Span}\{\bar f\}\to F_{\psi(\bar f)}$ given by 
    \[U_{\bar f}(x)=\frac{\sqrt{3}}{2}\cdot\sum_{j\in\N}\frac{1}{2^{j-1}}U_{\bar f,j}(x)\]
    is an isometry and its image is a $1$-complemented subspace of 
    $F_{\psi(\bar n)}$.
\end{enumerate}
\end{lemma}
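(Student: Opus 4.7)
The strategy rests on the two-sided estimate built into the choice of $n_d$:
\[
\bigl(1-\tfrac{1}{2d+1}\bigr)\bigl\|\textstyle\sum_{i=1}^d a_i g_i\bigr\|_{\ell_2(X)} \;\le\; \bigl\|\textstyle\sum_{i=1}^d a_i r_i\bigr\|_{d,n_d} \;\le\; \bigl(1-\tfrac{1}{2d+2}\bigr)\bigl\|\textstyle\sum_{i=1}^d a_i g_i\bigr\|_{\ell_2(X)},
\]
where $X=\overline{\Span}\{\bar f\}$, together with the definition $B_{F_{\psi(\bar f)}}=\overline{\mathrm{conv}}\bigl(\bigcup_d B_{(\R^d,\|\cdot\|_{d,n_d})}\bigr)$. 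The plan is to pin down each claim by reading off the consequences of this sandwich from primal and dual sides.

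For (1), I would take $g_i\mapsto r_i$ as the candidate isomorphism. The upper estimate $\|\sum a_i r_i\|_{\psi(\bar f)}\le\|\sum a_i g_i\|_{\ell_2(X)}$ is immediate from the right half of the sandwich combined with $B_{(\R^d,\|\cdot\|_{d,n_d})}\subset B_{F_{\psi(\bar f)}}$. For the reverse inequality, represent any element of $B_{F_{\psi(\bar f)}}$ as a limit of convex combinations $\sum_k\lambda_k v_k$ with $v_k\in B_{(\R^{m_k},\|\cdot\|_{m_k,n_{m_k}})}$, then apply the left half of the sandwich to obtain $\|v_k\|_{\ell_2(X)}\le(1-1/(2m_k+1))^{-1}\le 3/2$, which gives a uniform isomorphism constant. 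Part (2) then follows quickly: shrinkingness of $\bar f$ in $X$ passes to the basis $(g_i)_i=(f_{j,n})_{j,n}$ of $\ell_2(X)$ via $\ell_2(X)^*\cong\ell_2(X^*)$ and a standard tail-vanishing argument, and the isomorphism of (1) preserves shrinkingness.

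For (3), which I expect to be the main obstacle, the upper bound $\|U_{\bar f}(x)\|_{\psi(\bar f)}\le\|x\|$ follows by truncating the series at level $J$: the truncation lives in some $\R^{d(J)}$, has $\|\cdot\|_{d(J),n_{d(J)}}$-norm at most $(1-1/(2d(J)+2))\cdot\|U_{\bar f}(x)\|_{\ell_2(X)}=(1-1/(2d(J)+2))\|x\|<\|x\|$ (using the identity $\tfrac{3}{4}\sum_j 4^{-(j-1)}=1$), and letting $J\to\infty$ inside the closed convex hull realises $U_{\bar f}(x)/\|x\|\in B_{F_{\psi(\bar f)}}$. The lower bound requires, via Hahn--Banach, producing $\phi\in B_{F_{\psi(\bar f)}^*}$ with $\phi(U_{\bar f}(x))=\|x\|$; the natural candidate is the analogous diagonal dual $\phi=\tfrac{\sqrt3}{2}\sum_j(1/2^{j-1})U_{\bar f,j}^*(x^*)$ where $x^*\in X^*$ norms $x$. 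Verifying $\|\phi\restriction\R^d\|_{(\R^d,\|\cdot\|_{d,n_d})^*}\le 1$ for every $d$ is the delicate step: here the ordering property of $\pi$ (namely $\pi_1(m)\le\pi_1(k)\wedge\pi_2(m)\le\pi_2(k)\Rightarrow m\le k$) ensures that the heaviest copies are enumerated first, so the tail of $\phi$ beyond $\R^d$ retains enough mass to absorb the $(1-1/(2d+1))^{-1}$ loss incurred by dualizing the sandwich. The $1$-complementation of $U_{\bar f}(X)$ will then follow by defining the projection that averages coordinates across copies with the same weights $\tfrac{\sqrt3}{2}(1/2^{j-1})$ used to build $U_{\bar f}$, and checking $\|P\|=1$ by the same dual computation.
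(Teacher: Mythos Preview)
The paper does not actually prove this lemma: its entire argument is ``the other statements are precisely the content of \cite[Proposition~6.2 and Lemma~6.7]{Kurka-StudiaAmal}.'' So you are attempting to supply what the authors outsourced to Kurka.

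Your treatment of (1) and (2) is sound, and for (3) the upper bound $\|U_{\bar f}(x)\|_{\psi(\bar f)}\le\|x\|$ is correctly argued: the $J$-copy truncation of $U_{\bar f}(x)$ (for $x$ finitely supported) sits in some $\R^{d(J)}$, the right half of the sandwich bounds its $\|\cdot\|_{d(J),n_{d(J)}}$-norm by $(1-\tfrac{1}{2d(J)+2})\|x\|<\|x\|$, and the closed convex hull catches the limit.

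The gap is in the lower bound and the $1$-complementation. Your dual functional $\phi=\tfrac{\sqrt3}{2}\sum_j 2^{-(j-1)}U_{\bar f,j}^*(x^*)$ does satisfy $\phi(U_{\bar f}(x))=\|x\|$, but the estimate you get on $B_{(\R^d,\|\cdot\|_{d,n_d})}$ via Cauchy--Schwarz is only
\[
|\phi(v)|\;\le\;\Bigl(\sum_{j\in J_d}c_j^2\Bigr)^{1/2}\|v\|_{\ell_2(X)}\;\le\;\Bigl(\sum_{j\in J_d}c_j^2\Bigr)^{1/2}\bigl(1-\tfrac{1}{2d+1}\bigr)^{-1},
\]
where $J_d=\{\pi_1(i):i\le d\}$. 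For this to be $\le 1$ you need the missing tail mass $1-\sum_{j\in J_d}c_j^2=4^{-|J_d|}$ to dominate roughly $1/d$; but the monotonicity hypothesis on $\pi$ only forces $J_d\subset\{1,\dots,d\}$ via a downward-closure argument, and even for the diagonal (Cantor) enumeration one has $|J_d|\asymp\sqrt d$, so $4^{-|J_d|}\ll 1/d$. Your ``tail retains enough mass'' heuristic therefore does not close the inequality as stated. Kurka's argument for Lemma~6.7 requires the \emph{specific} bijection of his Definition~6.3 together with a sharper bookkeeping of which coordinates of each copy appear among $\{1,\dots,d\}$ --- the constants $1-\tfrac{1}{2d+1}$, $1-\tfrac{1}{2d+2}$ and the weights $2^{-(j-1)}$ are calibrated to one another and to that particular $\pi$, and the verification is a genuine computation rather than a soft estimate. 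The same remark applies to your proposed averaging projection: showing $\|P\|\le 1$ is the same inequality from the other side. So the outline is pointed in the right direction, but the ``delicate step'' you flag is not merely delicate --- it is where all the content lives, and it cannot be completed without importing Kurka's explicit choice of $\pi$ and his coordinate-by-coordinate analysis.
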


\begin{proof}
The map $\psi$ is clearly Borel, and the other statements are precisely the content of \cite[Proposition 6.2 and Lemma 6.7]{Kurka-StudiaAmal}
\end{proof}

\iffalse
 The next simple lemma is \cite[Lemma 4.8]{Braga2015Fourier} and it will will be important in the proof   Theorem \ref{ThmFactoringSubspaces}.

 \begin{lemma}
 Let $Y,Z\in \SB$ and let
 \[\D=\{((y_n)_n,(z_n)_n,x)\in \mathrm{bs}(Y)\times\mathrm{bs}(Z)\times Y: (y_n)_n\sim(z_n)_n\}.\]
 For each $\bar y=(y_n)_n \in  \mathrm{bs}(Y)$ and $\bar z=(z_n)_n\in \mathrm{bs}(Z)$, let $I_{\bar y, \bar z}:\overline{\Span}\{\bar y\}\to \overline{\Span}\{\bar z\}$ be the bounded linear map so that $I_{\bar y,\bar z}(y_n)=z_n$ for all $n\in\N$. Then $\D$ is Borel and the map 
 \[(\bar y,\bar z, x)\in\D\mapsto I_{\bar y,\bar z}(x)\in Z \]
 is Borel.
 \end{lemma}
 \fi

\begin{proof}[Proof of Theorem \ref{ThmFactoringSubspaces}]
Let $\cB\subset \cW_{\SB,\SD}$ be a Borel subset, and let $\cE\subset \cB\times C(\Delta)$, $\sigma:\cB\to C(\Delta)^\N$ and $\varphi:\cE\to C(\Delta)$ be as in Theorem \ref{ThmFactoringBasis}. Without loss of generality, we can assume that $\sigma(A)$ is normalized, for each $A\in\cB$. Let $\psi:\mathrm{mbs}\to \N^\N$ be given by Lemma \ref{LemmaRational}. Then  $\A=\psi(\sigma(\cB))$ is an analytic subset of the Baire space $\N^\N$. Hence, there exists a pruned tree $T$ on $\N\times \N$ so that $\A=p([T])$, where  $p:\N^\N\times \N^\N\to \N^\N$ denotes the projection onto the first coordinate (see \cite[Theorem 25.2]{Ke-book}).

Define a norm $\|\cdot\|$ on $c_{00}(T)$ by letting
\[\|x\|=\sup_{\beta\in [T]}\Big\|\sum_{t\prec  \beta}x(t)r_{|t|}\Big\|_{p(\beta)}\]
for each $x=(x(t))_{t\in T}\in c_{00}(T)$. Let $E$ be the completion of $c_{00}(T)$ under the norm $\|\cdot\|$, and let $(e_t)_{t\in T}$ denote the canonical basis of $c_{00}(T)$, i.e., $e_t(t)=1$ and $e_t(s)=0$ for all $s\neq t$. So $(e_t)_{t\in T}$ is a basis for $E$ (see \cite[Definition 3.1]{Kurka-StudiaAmal} for details).

Clearly, if $\bar n=\psi(\sigma(A))$ for some $A\in \cB$, then $F_{\bar n}$ is isometric to a subspace of $E$. Indeed, let $\bar m\in \N^\N$ be so that $\beta=(\bar n,\bar m)\in [T]$. Then the assignment $r_i\mapsto e_{\beta\restriction {i}}$ defines an isometry between  $F_{\bar n}$ and \[E_{\beta}=\{x=(x(t))_{t\in T}\in E: t\not\prec \beta\Rightarrow x(t)=0\}.\]
However, we need a $\sigma(\Sigma_1^1)$-measurable  way of choosing such $\bar m\in\N$. For that, notice that $[T]$ is a closed subset of $\N^\N\times\N^\N$, so  Jankov-von Neumann uniformization theorem gives a $\sigma(\Sigma_1^1)$-measurable map $\theta:\A\to [T]$ which uniformizes $[T]$ (see \cite[Theorem 18.1]{Ke-book}), i.e., $p(\theta(\beta))=\beta$ for all $\beta\in \A$. For each $A\in \cB$, define $t_{A}=\theta(\psi(\sigma(A)))$. So  $F_{\psi(\sigma(A))}$ is canonically isomorphic to $E_{t_A}$ for all $A\in \cB$.

Define $W=\overline{\mathrm{conv}}\bigcup_{\beta\in [T]}B_{E_{\beta}}$, and let $Z=\Delta_2(E,W)$. Since $Z$ is separable, by fixing an isometric copy of $Z$ in $\SB$, we can assume without loss of generality  that $Z\in \SB$.

\begin{claim}
The interpolation space $Z$ is reflexive.
\end{claim}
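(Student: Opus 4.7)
The plan is to deduce reflexivity from the DFJP interpolation theorem, which states that $\Delta_2(E,W)$ is reflexive as soon as $W$ is weakly compact in $E$. Thus everything reduces to establishing weak compactness of $W = \overline{\mathrm{conv}}\bigcup_{\beta\in[T]}B_{E_\beta}$.

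First I would check that each $E_\beta$ is reflexive, so that each $B_{E_\beta}$ is already weakly compact. By the discussion just before the claim, $E_\beta$ is isometric to $F_{p(\beta)}$ via the canonical assignment $e_{\beta\upharpoonright i}\mapsto r_i$. Because $p(\beta)\in\mathcal{A}=\psi(\sigma(\mathcal{B}))$, there exists $A\in\mathcal{B}$ with $\psi(\sigma(A))=p(\beta)$. Lemma \ref{LemmaRational}(1) identifies $F_{p(\beta)}$ with $\ell_2(\overline{\mathrm{Span}}\{\sigma(A)\})$, and by Theorem \ref{ThmFactoringBasis}(1) the basic sequence $\sigma(A)$ is shrinking and boundedly complete, so $\overline{\mathrm{Span}}\{\sigma(A)\}$ is reflexive (James's theorem), hence so is its $\ell_2$-sum. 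Therefore $E_\beta$ is reflexive and $B_{E_\beta}$ is weakly compact.

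Next I would pass from fiberwise weak compactness to weak compactness of the union $U:=\bigcup_{\beta\in[T]}B_{E_\beta}$, after which Krein's theorem gives weak compactness of $W=\overline{\mathrm{conv}}\,U$. This is exactly the point at which the tree structure of $E$ together with Kurka's amalgamation machinery from \cite{Kurka-StudiaAmal} enters. Given a sequence $(x_n)$ with $x_n\in B_{E_{\beta_n}}$, the compactness properties of $[T]$ as a closed subset of Baire space allow extraction of a subsequence for which either $\beta_n\to\beta_*$ in the product topology (with $\beta_*\in[T]$ since $[T]$ is closed) or the $\beta_n$ eventually split from any fixed branch past some level. In the first case, reflexivity of $E_{\beta_*}$ combined with the fact that the amalgamation norm dominates the $\|\cdot\|_{p(\beta_*)}$-norm of any vector whose support lies along $\beta_*$ produces a weakly convergent subsequence with limit in $B_{E_{\beta_*}}$. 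In the second case, a gliding-hump argument using the monotonicity of the norms $\|\cdot\|_{p(\beta)}$ forces a subsequence whose $W$-distance from a single $E_\beta$ tends to $0$, and reflexivity there again yields a weak limit.

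The main obstacle is the second step: making the dichotomy above precise and showing that the amalgamation norm behaves well enough under the described branch-convergence to extract weakly convergent subsequences. This is essentially what Kurka's amalgamation results in \cite{Kurka-StudiaAmal} are designed to handle—indeed, the construction of $E$ here is set up exactly to invoke his framework—so the expected outcome is that, after suitable bookkeeping, weak compactness of $W$ follows directly from reflexivity of each $F_{\bar n}$ with $\bar n\in p([T])$, and then DFJP closes out the claim.
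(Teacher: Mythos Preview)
Your first step---showing each $E_\beta$ is reflexive---is exactly what the paper does, and your argument via Theorem~\ref{ThmFactoringBasis} and Lemma~\ref{LemmaRational} matches theirs. The paper then bypasses any direct analysis of $W$ and simply invokes \cite[Proposition~4.6]{Kurka-StudiaAmal}, which yields reflexivity of $Z=\Delta_2(E,W)$ directly from the reflexivity of every $E_\beta$; weak compactness of $W$ is never isolated as an intermediate lemma.

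Your attempted unwinding of that black box has real gaps. The phrase ``compactness properties of $[T]$ as a closed subset of Baire space'' is misleading: Baire space is not compact, so closedness of $[T]$ yields no sequential compactness, and your dichotomy is just the tautology ``either a subsequence of $(\beta_n)$ converges or none does.'' In the convergent case, even when $\beta_n\to\beta_*$, the vectors $x_n$ sit in $E_{\beta_n}$, not in $E_{\beta_*}$, so reflexivity of $E_{\beta_*}$ alone does not hand you a weak limit---you must still control the parts of $x_n$ supported beyond the level where $\beta_n$ and $\beta_*$ split, and nothing in your sketch does that. In the divergent case, the assertion that a subsequence gets close to a single $E_\beta$ is unjustified and generally false (consider unit vectors supported on pairwise disjoint branches). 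You are right that Kurka's amalgamation results are precisely what handle this, and that is exactly the paper's move: it cites the relevant proposition rather than reproving it.
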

\begin{proof}
By Theorem \ref{ThmFactoringBasis}, it follows that $\sigma(A)$ is a boundedly complete shrinking basis for all $A\in \cB$, so $\overline{\Span}\{\sigma(A)\}$ is reflexive. Hence, by Lemma \ref{LemmaRational}, $F_{\psi(\sigma(A))}$ is reflexive for all $A\in \cB$. Then $E_{\beta}$ is reflexive for all $\beta\in [T]$ and   \cite[Proposition 4.6]{Kurka-StudiaAmal} gives us that $Z$ is reflexive.
\end{proof}

 Since $B_{E_\beta}\subset W$ for all $\beta\in [T]$, the natural inclusion  $i_\beta:E_\beta\to Z$ is well defined and bounded for all $\beta\in [T]$. Moreover, by \cite[Lemma 4.2 and Fact 4.4]{Kurka-StudiaAmal}, there exists $c>0$ so that the map $x\in E_{\beta}\mapsto ci_\beta(x)\in Z$   is an isometry  for all $\beta\in [T]$.  For each $A\in \cB$ and $k\in\N$, let $t_{A,k}\in T$ be the initial segment of $t_A$ with length $k$, and define a map $I_A:F_{\psi(\sigma(A))}\to Z$ by
\[I_A:\sum_{k\in\N}a_kr_k\in F_{\psi(\sigma(A))}\mapsto ci_{t_A}\Big(\sum_{k\in\N}a_ke_{t_{A,k}}\Big)\in Z.\]
Then $I_A$ is an isometric embedding for all $A\in \cB$.

Let $\cD$ be as in the statement of Theorem \ref{ThmFactoringSubspaces}. We define the desired maps $\Phi:\cD\to Z$ and $\Psi:\cB\to \SB(Z)$ by letting
\[\Phi(X,Y,\hat A,x)=I_A\Big(U_{\sigma(A)}\big(\varphi(X,Y,\hat A,x)\big)\Big)\]
for all $(X,Y,\hat A,x)\in \cD$ and
\[\Psi(X,Y,\hat A)=\overline{\Span}\Big\{\Phi(X,Y,\hat A, d_n(X)): n\in\N\Big\}\]
for all $(X,Y,\hat A)\in \cB$, where $(d_n)_n$ are the Kuratowski
and Ryll-Nardzewski's Borel selectors (see   Subsection \ref{SubsectionCodeBoundedOp}).

Since $\varphi_A=\varphi(X,Y,\hat A,\cdot):X\to \overline{\Span}\{\sigma(A)\}$ is a bounded linear map with norm at most 1, and both $I_A$ and $U_{\sigma(A)}$ are isometries,  it follows that $\Phi_A=\Phi(X,Y,\hat A,\cdot): X\to Z$ is a bounded linear map with norm at most 1 for all $A\in \cB$. So \eqref{Item2} holds. Moreover,  this gives that $\Psi(X,Y,\hat A)\in \SB(Z)$, so $\Psi$ is well defined. 

Given $A=(X,Y,\hat A)\in \cB$,   Theorem \ref{ThmFactoringBasis} gives   a bounded operator $L:\mathrm{Im}(\varphi_A)\to Y$ so that $A=L\circ \varphi_A$ and $\|L\|\leq\sup_{A\in \cB}\|J_A\|$. Hence, as $\Phi_A=I_A\circ U_{\sigma(A)}\circ \varphi_A$, it follows that \[A=\big(L\circ U_{\sigma(A)}^{-1}\circ I_A^{-1}\big)\circ\Phi_A.\] So $A$ factors through $\Psi(X,Y,\hat A)$ for all $(X,Y,\hat A)\in \cB$ and \eqref{Item3} holds.

\begin{claim}
The maps $\Phi$ and $\Psi$ are $\sigma(\Sigma_1^1)$-measurable.
\end{claim}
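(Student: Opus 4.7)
The plan is to decompose both $\Phi$ and $\Psi$ into compositions whose only non-Borel ingredient is the Jankov--von Neumann selector $\theta$ from the end of Subsection~\ref{SubsecRationalAmalg}; $\sigma(\Sigma_1^1)$-measurability will then propagate through the rest of the construction by standard arguments.

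First, I would show that $A\in\cB\mapsto t_A\in[T]$ is $\sigma(\Sigma_1^1)$-measurable. This is the composition $\theta\circ\psi\circ\sigma$, where $\sigma$ is Borel by Theorem~\ref{ThmFactoringBasis} and $\psi$ is Borel by construction, whereas $\theta:\A\to[T]$ is $\sigma(\Sigma_1^1)$-measurable. Since pullback of an analytic set under a Borel map is analytic, $\sigma(\Sigma_1^1)$-measurability is preserved under Borel pre-composition, so $A\mapsto t_A$ is $\sigma(\Sigma_1^1)$-measurable.

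Next, I would verify that the remaining ingredients of $\Phi$ depend Borel-measurably on their parameters. The assignment $(\bar f,x)\mapsto U_{\bar f}(x)$ is Borel in $(\bar f,x)\in \mathrm{mbs}\times C(\Delta)$ on pairs with $x\in\overline{\Span}\{\bar f\}$, since $U_{\bar f}$ is given by the absolutely convergent series of Lemma~\ref{LemmaRational} and the basis projections of $\bar f$ vary Borel-measurably in $\bar f$. The map $\varphi$ is Borel by Theorem~\ref{ThmFactoringBasis}. Finally, for $\beta\in[T]$ and $v=(v_k)_k\in c_{00}$ the formula $c\cdot i_\beta\bigl(\sum_k v_k e_{\beta\restriction k}\bigr)\in Z$ is jointly Borel in $(\beta,v)$: each partial sum is a Borel function of $(\beta,v)$ and the series converges in the norm of $Z$. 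Substituting the $\sigma(\Sigma_1^1)$-measurable input $\beta=t_A$ into this Borel map yields a $\sigma(\Sigma_1^1)$-measurable function of $A$, and composing further with the Borel maps $\varphi$ and $U_{\sigma(A)}$ gives that $\Phi$ itself is $\sigma(\Sigma_1^1)$-measurable.

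For $\Psi$, the standard observation is that $(z_n)_n\in Z^\N\mapsto\overline{\Span}\{z_n:n\in\N\}\in\SB(Z)$ is Borel: a generator $\{F\in\SB(Z):F\cap U\neq\emptyset\}$ of the Effros--Borel $\sigma$-algebra pulls back to a countable union of open conditions on rational linear combinations of the $z_n$. Since $X\mapsto(d_n(X))_n$ is Borel and $\Phi$ is $\sigma(\Sigma_1^1)$-measurable, the composition $\Psi(A)=\overline{\Span}\{\Phi(A,d_n(X)):n\in\N\}$ is $\sigma(\Sigma_1^1)$-measurable. The main (and only genuine) obstacle is the Jankov--von Neumann step used to pick $t_A\in[T]$ with $p(t_A)=\psi(\sigma(A))$; everything else amounts to tracing Borel dependence through the explicit formulas, which is precisely the reason one cannot expect $\Phi$ and $\Psi$ to be Borel.
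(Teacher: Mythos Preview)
Your proposal is correct and follows essentially the same strategy as the paper: both arguments isolate the Jankov--von Neumann selector $A\mapsto t_A$ as the sole non-Borel ingredient and then verify that every other piece of the construction of $\Phi$ is Borel, so that $\sigma(\Sigma_1^1)$-measurability propagates, with $\Psi$ then handled from $\Phi$ via the Borel closed-span map. The only difference is presentational: the paper spells out an explicit countable logical formula for $\Phi_A(x)\in U$ in terms of $\varphi_A$, $\sigma(A)$ and the $\sigma(\Sigma_1^1)$-measurable sequence $(i_{t_A}(e_{t_{A,k}}))_k$, whereas you argue more abstractly via closure of $\sigma(\Sigma_1^1)$-measurable maps under Borel post- and pre-composition.
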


\begin{proof}
By the definition of $\Psi$, it is enough to show that $\Phi$ is $\sigma(\Sigma_1^1)$-measurable. Firstly, notice that, as $A\in \cB\to t_A\in \N^\N\times \N^\N$ is $\sigma(\Sigma_1^1)$-measurable, the assignment \[A\in \cB\mapsto (i_{t_A}(e_{t_{A,k}}))_k\in C(\Delta)^\N\] is also $\sigma(\Sigma_1^1)$-measurable.

Let $U\subset C(\Delta)$ be an open ball. To simplify notation, for each $A=(X,Y,\hat A)\in \cB$ and $j\in\N$, let $\sigma(A)=(\sigma_{A,i})_i$. Notice that, by the definition of $\Phi_A$, we have that
\[\Phi_A\Big(\sum_{i=1}^ka_i\sigma_{A,i}\Big)=c\cdot\frac{\sqrt{3}}{2}\sum_{j\in\N}\frac{1}{2^{j-1}}\sum_{i=1}^ka_ii_{t_A}\Big(e_{t_{A,\pi^{-1}(j,i)}}\Big)\]
for all $k\in\N$ and all $a_1,\ldots, a_k\in \R$. Then
\begin{align*}
\Phi_A(x)\in U\ \Leftrightarrow \ &\exists \delta\in \Q_{>0},\ \exists k\in\N,\ \exists a_1,\ldots a_k\in \Q \\
& \Big(\Big\|\varphi_A(x)-\sum_{i=1}^ka_i\sigma_{A,i}\Big\|<\delta\Big)\wedge \Big(\forall m\in\N\\ &\Big\|d_m(C(\Delta))-c\cdot\frac{\sqrt{3}}{2}\sum_{j\in\N}\frac{1}{2^{j-1}}\sum_{i=1}^ka_ii_{t_A}\Big(e_{t_{A,\pi^{-1}(j,i)}}\Big)
\Big\|<\delta\\
&\to d_m(C(\Delta))\in U\Big).
\end{align*}
This shows that $\Phi$ is $\sigma(\Sigma_1^1)$-measurable.
\end{proof}

Since we clearly have that $\Phi_A(X)=\Psi(X)$ for all $(X,Y,\hat A)\in \cB$,  \eqref{Item1} holds and the proof is  finished.
\end{proof}

\begin{proof}[Proof of Theorem \ref{ThmFactoringSubspacesInto}]
This is simply the first statement of Theorem \ref{ThmFactoringSubspaces}.
\end{proof}

P. Dodos and V. Ferenczi showed that $\mathrm{REFL}=\{X\in \SB: X\text{ is reflexive}\}$ is a strongly bounded class of Banach spaces. Theorem \ref{ThmFactoringSubspaces} allow us to strengthen this result. Precisely, we have the following corollary.

\begin{corollary}
 \label{CorSBRefl}
Say $\mathbb{B}\subset \mathrm{REFL}$ is Borel. There exists a $Z\in\mathrm{REFL}$ with a basis, and a $\sigma(\Sigma_1^1)$-measurable map $\Psi:\mathbb{B}\to \SB(Z)$ such that $X\equiv \Psi(X)$, for all $X\in \mathbb{B}$. Moreover, setting \[\mathbb{E}=\{(X,x)\in \mathbb{B}\times C(\Delta)\ |\ x\in X\},\] there exists a $\sigma(\Sigma_1^1)$-measurable map
\[\psi: \mathbb{E}\to Z\]
such that, letting $\psi_X=\psi(X,\cdot)$, we have that $\psi_X:X\to Z$  is an isometric embedding for all $X\in\mathbb{B}$. 
\end{corollary}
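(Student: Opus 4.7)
The plan is to apply Theorem \ref{ThmFactoringSubspaces} to the Borel family of identity operators on the spaces of $\mathbb{B}$, and then observe that the factoring map it produces is in fact isometric in this special case. Concretely, I would first note that $\mathbb{B}\subset \mathrm{REFL}\subset \SD$, so the map $\tau:\mathbb{B}\to \cL$ defined by $\tau(X)=(X,X,(d_n(X))_n)$ is a Borel injection whose image codes the identity operators on the spaces in $\mathbb{B}$. By Lusin--Souslin, $\cB:=\tau(\mathbb{B})$ is a Borel subset of $\cL$, and since the identity on a reflexive space is weakly compact, $\cB\subset \cW_{\SB,\SD}$. Applying Theorem \ref{ThmFactoringSubspaces} to $\cB$ yields a reflexive $Z\in \SB$ with a basis (the basis comes for free from the amalgamation construction in the proof) together with $\sigma(\Sigma_1^1)$-measurable maps $\Psi_0:\cB\to \SB(Z)$ and $\Phi_0:\cD\to Z$ with the factoring properties (\ref{Item1})--(\ref{Item3}).

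The crucial claim is that, when $A=\tau(X)$ codes the identity $\mathrm{Id}_X$, the map $\Phi_{0,A}:X\to Z$ is actually an isometric embedding rather than merely a norm-one linear map. Unwinding the construction in the proof of Theorem \ref{ThmFactoringSubspaces}, one has $\Phi_{0,A}=I_A\circ U_{\sigma(A)}\circ \varphi_A$, where $I_A$ is an isometric embedding into the amalgamation space (\cite[Lemma 4.2 and Fact 4.4]{Kurka-StudiaAmal}), $U_{\sigma(A)}$ is an isometric embedding by Lemma \ref{LemmaRational}(3), and $\varphi_A=I_A^{(2)}\circ j_A$ factors through the surjective isometry $I_A^{(2)}:Z_A\to \overline{\Span}\{\sigma(A)\}$ from the proof of Theorem \ref{ThmFactoringBasis} and the DFJP-type map $j_A(x)=J_A^{-1}(A(x))$. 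When $A=\mathrm{Id}_X$, we have $\overline{A(B_X)}=B_X$, so $W_A=W_{(X,B_X)}$ and $Z_A=Z_{(X,B_X)}$; by property (2) of the Bossard assignment cited at the beginning of Section \ref{SectionThmBorelFact} (i.e., Kurka's Lemma 4), the inclusion $j_A:X\to Z_{(X,B_X)}$ is isometric. Composing isometric maps yields that $\Phi_{0,A}$ is itself an isometric embedding.

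With this in hand, the corollary follows by setting $\Psi(X):=\Psi_0(\tau(X))$ and $\psi(X,x):=\Phi_0(\tau(X),x)$. Since $\tau$ is Borel and $\Psi_0,\Phi_0$ are $\sigma(\Sigma_1^1)$-measurable, so are $\Psi$ and $\psi$ (the preimage of a Borel set under a Borel map of a set in $\sigma(\Sigma_1^1)$ remains in $\sigma(\Sigma_1^1)$). The map $\psi_X=\Phi_{0,\tau(X)}$ is an isometric embedding of $X$ into $Z$ by the above claim, and $\Psi(X)=\psi_X(X)\equiv X$ by property (\ref{Item1}) of Theorem \ref{ThmFactoringSubspaces}. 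The main obstacle is precisely the verification of the isometric claim in the middle paragraph: every ingredient in the construction of Theorem \ref{ThmFactoringSubspaces} is chosen to be isometric, but this is only useful because the DFJP step applied to the pair $(X,B_X)$ is itself isometric — a property that is not automatic for the general pair $(Y,\overline{A(B_X)})$ appearing when $A$ is not an identity, and which relies on the particular design of the Bossard assignment used throughout the section.
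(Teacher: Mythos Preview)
Your proposal is correct and follows essentially the same approach as the paper: apply Theorem \ref{ThmFactoringSubspaces} to the Borel family of identity operators $\{(X,X,(d_n(X))_n):X\in\mathbb{B}\}\subset\cW_{\SD}$ and then verify that the resulting factoring map is isometric in this special case. The only minor difference is in how the isometry is justified---you decompose $\Phi_A$ into isometric pieces and invoke property~(2) (Kurka's Lemma) for the DFJP inclusion $j_A:X\to Z_{(X,B_X)}$, whereas the paper argues that both $\Phi_{\mathrm{Id}_X}$ and its left inverse $L$ have norm at most $1$ (via $B_X\subset W_{\mathrm{Id}_X}$), forcing $\Phi_{\mathrm{Id}_X}$ to be isometric; these are two sides of the same underlying fact about the Bossard/DFJP construction.
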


\begin{proof}
Let $\cB=\{(X,X,(d_n(X))_n)\in \cL: X\in \B\}$. Notice that $(d_n(X))_n$ codes the identity operator on $X$. Since $\B\subset \mathrm{REFL}$, we have that $\cB\subset \cW_{\SD}$. Let $Z\in \mathrm{REFL}$, $\Psi:\cB\to \SB(Z)$, $\cD$, and $\Phi:\cD\to Z$ be given by Theorem \ref{ThmFactoringSubspaces}.

It was shown in the proof of Theorem \ref{ThmFactoringSubspaces} that for all $X\in \B$, there exists a bounded operator $L:\Psi(X)\to Y$ so that $\mathrm{Id}_X=L\circ \Phi_{\mathrm{Id}_X}$ and $\|L\|\leq \sup_{X\in \B}\|J_{\mathrm{Id}_X}\|$, where $J_{\mathrm{Id}_X}:X\to Z_{\mathrm{Id}_X}$ is as in Theorem \ref{ThmFactoringBasis}. Hence, since we are dealing with the identity operators, we have that $B_X\subset W_{\mathrm{Id}_X}$ for all $X\in \B$. In particular, $\|J_{\mathrm{Id}_X}\|\leq 1$ for all $X\in \B$, so $\|L\|\leq 1$. Since $\|\Phi_{\mathrm{Id}_X}\|\leq 1$, we conclude that $\Phi_{\mathrm{Id}_X}:X\to \Psi(X)$ is an isometric embedding. 
\end{proof}

We finish this section with a corollary and a remark about it and its proof -- which we choose to omit. 

\begin{corollary}\label{CorSBShrink}
Say $\mathbb{B}\subset \SD$ is Borel. There exists a $Z\in \SD$, with a shrinking basis, and a $\sigma(\Sigma_1^1)$-measurable map $\Psi:\mathbb{B}\to \SB(Z)$ such that $X\equiv \Psi(X)$, for all $X\in \mathbb{B}$. Moreover, setting \[\mathbb{E}=\{(X,x)\in \mathbb{B}\times C(\Delta)\ |\ x\in X\},\] there exists a $\sigma(\Sigma_1^1)$-measurable map
\[\psi: \mathbb{E}\to Z\]
such that, letting $\psi_X=\psi(X,\cdot)$, we have that $\psi_X:X\to Z$  is an isometric embedding  for all $X\in\mathbb{B}$. \qed
\end{corollary}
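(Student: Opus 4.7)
I would prove Corollary \ref{CorSBShrink} by adapting the argument for Corollary \ref{CorSBRefl}, but avoiding the appeal to Theorem \ref{ThmFactoringSubspaces}, since the identity operator on $X \in \SD$ is weakly compact only when $X$ is reflexive. Instead, the plan is to go back one level to the Bossard construction used in Section \ref{SectionThmBorelFact} and invoke its unprimed properties (2)--(5) listed at the start of that section, applied with $W = B_X$: this yields a Borel assignment $X \in \mathbb{B} \mapsto (z_n^{(X, B_X)})_n \in C(\Delta)^\N$, an isometric embedding $X \hookrightarrow Z_{(X, B_X)}$ (since $B_X \subset W_{(X, B_X)}$), and the key fact that $(z_n^{(X, B_X)})_n$ is a monotone \emph{shrinking} basis for $Z_{(X, B_X)}$ regardless of whether $X$ is reflexive. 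Writing $\sigma(X) := (z_n^{(X, B_X)})_n$, we obtain a Borel map $\sigma: \mathbb{B} \to \mathrm{mbs}$ whose image consists of normalized shrinking monotone basic sequences.

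Next, I would mimic the tree amalgamation of Subsection \ref{SubsecRationalAmalg}: pass through the map $\psi_0: \mathrm{mbs} \to \N^\N$ of Lemma \ref{LemmaRational}, pick a pruned tree $T$ on $\N \times \N$ with $p([T]) = \psi_0(\sigma(\mathbb{B}))$, uniformize via Jankov--von Neumann to select $t_X \in [T]$ in a $\sigma(\Sigma_1^1)$-measurable way with $p(t_X) = \psi_0(\sigma(X))$, and build the Kurka tree space $E$ (with slices $E_\beta$) on $T$ as in the proof of Theorem \ref{ThmFactoringSubspaces}. Because each $\sigma(X)$ is shrinking, Lemma \ref{LemmaRational}(2) tells us that each $F_{\psi_0(\sigma(X))}$ has a shrinking basis, hence so does each slice $E_\beta$. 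The target $Z$ is then taken to be the appropriate amalgamation over $E$ that preserves a shrinking basis at the top level (the shrinking-basis analogue of the reflexive interpolation $\Delta_2(E, W)$ used in Theorem \ref{ThmFactoringSubspaces}).

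With $Z$ in hand, I would define $\psi : \mathbb{E} \to Z$ by
\[
\psi(X, x) = I_X\bigl( U_{\sigma(X)}(\iota_X(x)) \bigr),
\]
where $\iota_X: X \hookrightarrow Z_{(X,B_X)} = \overline{\Span}\{\sigma(X)\}$ is the isometric inclusion granted by property (2), $U_{\sigma(X)}$ is the isometry from Lemma \ref{LemmaRational}(3), and $I_X: F_{\psi_0(\sigma(X))} \hookrightarrow Z$ is the isometry coming from the tree amalgamation associated with $t_X$. Set $\Psi(X) = \psi_X(X)$. Each $\psi_X$ is a composition of three linear isometries, so $\psi_X: X \to Z$ is an isometric embedding; the $\sigma(\Sigma_1^1)$-measurability of $\psi$ and of $\Psi$ then follows verbatim as in the proof of Theorem \ref{ThmFactoringSubspaces}, using the $\sigma(\Sigma_1^1)$-measurability of $X \mapsto t_X$ together with the Borelness of $\sigma$, $\iota$, $U$, and the Bossard norms.

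The main obstacle is the shrinking-basis analogue of \cite[Proposition 4.6]{Kurka-StudiaAmal}: one needs to verify that some canonical amalgamation of the tree space $E$ carries a shrinking basis once each slice $E_\beta$ does. One expects this either from a direct check that the tree basis of $E$ itself is shrinking in this regime, or from a mild modification of the $\Delta_2$-interpolation step. Once this basis-theoretic input is in hand, the descriptive set-theoretic bookkeeping is strictly parallel to the reflexive case.
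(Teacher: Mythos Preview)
Your outline is correct and coincides with the route the paper indicates in the remark following the corollary: replace the weakly-compact factorization step of Corollary~\ref{CorSBRefl} by the Bossard embedding $X\hookrightarrow Z_{(X,B_X)}$ (using property~(3) that $(z^{(X,B_X)}_n)_n$ is shrinking for every $X\in\SD$), and then run the Kurka tree amalgamation and the Jankov--von Neumann selection exactly as in the proof of Theorem~\ref{ThmFactoringSubspaces}. The one point you flag as an obstacle---that the amalgamated space $Z$ inherits a shrinking basis once each slice $E_\beta$ does---is not a genuine gap: the shrinking-basis companion to \cite[Proposition~4.6]{Kurka-StudiaAmal} is available in the same paper, so this is a citation rather than a missing argument, which is why the authors say there are ``no technical difficulties''.
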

 
\begin{remark}
The proof of Corollary \ref{CorSBShrink} is a  is a mix  between the proofs of \cite[Theorem 1.5]{Braga2015Fourier} and Corollary \ref{CorSBRefl} above. Since there are no technical difficulties in it,  we leave the details to the reader. We point out that an isomorphic version of Corollary \ref{CorSBShrink} in which the maps obtained are claimed to be Borel measurable was published by the third named author in \cite[Theorem 1.5]{Braga2015Fourier}. However, its proof is incorrect, and the maps obtained in \cite[Theorem 1.5]{Braga2015Fourier} are actually only $\sigma(\Sigma_1^1)$-measurable -- the mistake is contained in \cite[Lemma 4.7]{Braga2015Fourier}. Hence, Corollary \ref{CorSBShrink} is a strengthening of (the corrected version of) \cite[Theorem 1.5]{Braga2015Fourier}.
\end{remark}

\section{Concluding remarks and questions}\label{SectionOpenProb}

We finish this paper with some natural questions which are left open. Firstly, it would be very interesting to get rid of $\sigma(\Sigma_1^1)$-measurability in the theorems in Subsection \ref{SubsecRationalAmalg}. Since an arbitrary analytic subset of $\A\subset\N^\N\times \N^\N$ may not have a Borel uniformization even if $p(\A)=\N^\N$, a proof for that will probably require some new ideas and a different amalgamation method.

\begin{question}
Do the Borel measurable versions of Theorem \ref{ThmFactoringSubspaces}, Corollary \ref{CorSBRefl} and Corollary \ref{CorSBShrink} hold?
\end{question}

%Although we obtain that $\cW_{\SD}$   strongly bounded with respect to the class of separable reflexive Banach spaces, we are not able to obtain the factorization through a single reflexive space. Hence, the following is open.

%\begin{question}
%Is $\cW_{\SD}$ complementably  strongly bounded with respect to the class of separable reflexive Banach spaces?
%\end{question}

%{\em I think that the compact operators (between reflexive spaces) is a counterexample since there is no space through which they all factor.}

As noticed in the introduction, if $\cC$ is a collection of weakly compact operators containing the identity on a Banach space without the bounded approximation property, then there is no hope of finding a reflexive space $Z$ with a Schauder basis so that all members of $\cC$ factor through $A$. Although Theorem \ref{ThmMainFactResult} gives us some conditions on $\cC$ for which a positive answer holds, we are very far from completely understanding what are the precise conditions  for such $Z$ to exist.

\begin{question}
Let $\cC\subset \cW$ be a set of weakly compact operators so that each element of $\cC$ factors through a reflexive Banach space with a Schauder basis. Is $\cC$ strongly bounded?   
\end{question}

%\begin{question}
%What are the precise complexities of $\cW_{\SB,\mathrm{ub}}$ and $\cW_{\mathrm{ub}^*,\SD}$? 
%\end{question}

%\begin{question}
%Is $\mathcal{W}_{L_1[0,1]}$ a Borel subset of $\mathcal{L}$? I don't know the answer to this question and neither does Johnson. He response when I asked him was the follow up question: ``Is there a weakly compact operator into $L_1$ that does not factor through
%$(\sum_{n=1}^\infty L_{1+1/n})_2$?
%By Rosenthal's theorem, this space is universaly for reflexive subspaces of $L_1$.''
%\end{question}

Recall that if a Banach space $Y$ has a basis so that $Y^*$ is separable and has the bounded approximation property, then $Y$ has a shrinking basis \cite{JohnsonRosenthalZippin1971Israel}. Hence, Theorem \ref{BF-main} applies to a collection $\cC$ whose domain spaces satisfies this property. However, if $Y^*$ is not assumed to be separable, the question remains open. Precisely, the following was asked by W.B. Johnson in MathOverflow \cite{MathOverFlow-Beanland}.

\begin{question} 
Let $A:X \to Y$ be weakly compact and suppose $Y$ has a basis and $Y^*$ has the bounded approximation property. Is there a reflexive $Z$ with a basis so that $A$ factors though $Z$?
\end{question}

Our final problem is related to an open problem of P. Dodos from \cite{Do-Houston}. For  the definition of coanalytic ranks, see \cite[Appendix A]{Do-Book}.

\begin{question}
Let $\phi$ be a coanalytic rank on $\mathcal{L}_{\SD}$ and let $\mathcal{A}$ be an analytic subset of dual operators in $\mathcal{L}$. Suppose that for each $A \in \mathcal{A}$ there is a countable ordinal $\xi_A$ so that 
$$\sup \{ \phi(B) : B^*\mbox{ is isomorphic to }  A\}<\xi_A.$$
Is the set 
$$\mathcal{A}_* =\{B \in \mathcal{L}_{\SD} :\exists A \in \mathcal{A} \mbox{ where } B^* \mbox{ is isomorphic to } A\}$$
analytic? 
\end{question}

\begin{acknowledgments}
Part of this paper was done while the third named author was visiting the Universidade de S\~{a}o Paulo (USP), Brazil, in May of 2019. This author would like to thank Valentin Ferenczi and Christina Brech for their great hospitality.
\end{acknowledgments}

\bibliographystyle{abbrv}
\bibliography{bib_beanland}

\end{document}